\providecommand{\U}[1]{\protect\rule{.1in}{.1in}}
\providecommand{\U}[1]{\protect\rule{.1in}{.1in}}
\newtheorem{theorem}{Theorem}[section]
\newtheorem{corollary}[theorem]{Corollary}
\newtheorem{definition}[theorem]{Definition}
\newtheorem{example}[theorem]{Example}
\newtheorem{lemma}[theorem]{Lemma}
\newtheorem{prop}[theorem]{Proposition}
\newtheorem{remark}[theorem]{Remark}
\newenvironment{proof}[1][Proof]{\textbf{#1.} }{\ \rule{0.5em}{0.5em}}
\newcommand{\bb}{\begin{eqnarray*}}
\newcommand{\ee}{\end{eqnarray*}}
\newcommand{\bbb}{\begin{eqnarray}}
\newcommand{\eee}{\end{eqnarray}}
\begin{document}

\title{On the quenched CLT for stationary random fields under projective criteria}
\author{Na Zhang\thanks{Corresponding author}, {Lucas Reding,} and Magda Peligrad}
\date{\vspace{-3em}}
\maketitle

Department of Mathematics, Towson University, Towson, MD 21252-0001,USA.

Email: nzhang@towson.edu$^{*}$

Universit\'{e} de Rouen Normandie. F76801 Saint-\'{E}tienne-du-Rouvray.

Email: lucas.reding@etu.univ-rouen.fr

University of Cincinnati, PO box 210025, Cincinnati, OH 45221-0025, USA.

Email: peligrm@ucmail.uc.edu

\begin{center}
Abstract
\end{center}

Motivated by random evolutions which do not start from equilibrium, in a
recent work, Peligrad and Voln\'{y} (2018) showed that the central limit
theorem (CLT) holds for stationary ortho-martingale random fields when they
are started from a fixed past trajectory. In this paper, we study this type of
behavior, also known under the name of quenched CLT, for a class of random
fields larger than the ortho-martingales. We impose sufficient conditions in
terms of projective criteria under which the partial sums of a stationary
random field admit an ortho-martingale approximation. More precisely, the
sufficient conditions are of the Hannan's projective type. We also discuss
some aspects of the functional form of the quenched CLT. As applications, we
establish new quenched CLT's and their functional form for linear and
nonlinear random fields with independent innovations.

\bigskip

Key words: random fields, quenched central limit theorem, ortho-martingale
approximation, projective criteria.\newline Mathematical Subject
Classification(2000): 60G60, 60F05, 60G42, 60G48, 41A30.

\section{Introduction}

An interesting problem, with many practical applications, is to study limit
theorems for processes conditioned to start from a fixed past trajectory. This
problem is difficult, since the stationary processes started from a fixed past
trajectory, or from a point, are no longer stationary. Furthermore, the
validity of a limit theorem is not enough to assure that the convergence still
holds when the process is not started from its equilibrium. This type of
convergence is also known under the name of almost sure conditional limit
theorem or the quenched limit theorem.
The issue of the quenched CLT for stationary processes has been widely
explored for the last few decades.
Among many others, we mention papers by Derriennic and Lin (2001), Cuny and
Peligrad (2012), Cuny and Voln\'{y} (2013), Cuny and Merlev\`{e}de (2014),
Voln\'{y} and Woodroofe (2014), Barrera et al. (2016). Some of these results
were surveyed in Peligrad (2015).\newline

A random field consists of multi-indexed random variables $(X_{\mathbf{u}%
})_{\mathbf{u}\in Z^{d}},$ where $d$ is a positive integer. The main
difficulty when analyzing the asymptotic properties of random fields, is the
fact that the future and the past do not have a unique interpretation. To
compensate for the lack of ordering of the filtration, it is customary to use
the notion of commuting filtrations. Traditionally, this kind of filtration is
constructed based on random fields which are functions of independent and
identically distributed random variables. Alternatively, commuting filtrations
can be induced by stationary random fields with independent columns or rows.
See for example, El Machkouri et al. (2013) and Peligrad and Zhang (2018a). As
in the case of random processes, a fruitful approach for proving limit
theorems for random fields is via the martingale approximation method, which
was started by Rosenblatt (1972) and its development is still in progress.
Recently, the interest is in the approximation by ortho-martingales which were
introduced by Cairoli (1969).
We would like to mention several important recent contributions in this
direction by Gordin (2009), Voln\'{y} and Wang (2014), Voln\'{y} (2015), Cuny
et al. (2015), Peligrad and Zhang (2018a), Giraudo (2017) and Peligrad and
Zhang (2018b).
However, the corresponding quenched version of these results have rarely been
explored. To the best of our knowledge, so far, the only quenched invariance
principle for random fields is due to Peligrad and Voln\'{y} (2018). Their
paper contains a quenched functional CLT for ortho-martingales and a quenched
functional CLT for random fields via co-boundary decomposition. By
constructing an example of an ortho-martingale which satisfies the CLT but not
its quenched form, Peligrad and Voln\'{y} (2018) showed that, contrary with
the one dimensional index set, the finite second moment condition is not
enough for the quenched CLT. For the validity of this type of results, they
provided a minimal moment condition, that is: $E\left(  X_{\mathbf{0}}^{2}
\log^{d-1}(1+|X_{\mathbf{0}}|)\right)  <\infty$, where $\mathbf{0}%
=(0,\cdots,0)\in Z^{d}$ and $d$ is the dimension. \newline

Here, we aim to establish sufficient conditions in terms of projective
criteria such that a quenched CLT holds. One of the results of this paper is a
natural extension of the quenched CLT for ortho-martingales in Peligrad and
Voln\'{y} (2018) to more general random fields under the generalized Hannan
projective condition (1973). Our result is also a quenched version of the main
theorem in Peligrad and Zhang (2018a). The functional form of a quenched CLT
that we shall use in our applications will also be explored in this paper. The
tools for proving these results consist of ortho-martingale approximations,
projective decompositions and ergodic theorems for Dunford-Schwartz
operators.\newline

Our paper is organized as follows. In the next section, we introduce the
preliminaries and our main results for double-indexed random fields. In
Section 3, we prove the quench CLT's for double-indexed random fields.
Extensions to general indexed random fields and their proofs are given in
Section 4. Section 5 contains a functional CLT\ which will be used in
applications. In Section 6, we apply our results to linear and Volterra random
fields with independent innovations, which are often encountered in economics.
For the convenience of the reader, in the Appendix, we provide a well-known
inequality for martingales and an important theorem in decoupling theory which
will be of great importance for the proof of our main results.

\section{Preliminaries and Results}

For the sake of clarity, especially due to the complicated notation, in this
section, we shall only talk about the double-indexed random fields. After
obtaining results for double-indexed random fields, we will extend them to
random fields indexed by $Z^{d},d>2$. We shall introduce first a stationary
random field adapted to a stationary filtration. In order to construct a
flexible filtration it is customary to start with a stationary real valued
random field $(\xi_{n,m})_{n,m\in Z}$ defined on a probability space
$(\Omega,\mathcal{K},P)$ and\ define the filtrations%
\begin{equation}
\mathcal{F}_{k,\ell}=\sigma(\xi_{j,u}:j\leq k,\text{ }u\leq\ell).
\label{def fitration}%
\end{equation}

For all $i,j\in Z$, we also define the following sigma algebras generated by
the union of sigma algebras: $\mathcal{F}_{\infty, j}=\vee_{n\in Z}%
\mathcal{F}_{n,j}$, $\mathcal{F}_{i,\infty}=\vee_{m\in Z}\mathcal{F}_{i,m}$
and $\mathcal{F}_{\infty,\infty}=\vee_{i, j\in Z}\mathcal{F}_{i,j}$. \newline

To ease the notation, sometimes the conditional expectation will be denoted by%
\[
E_{a,b}X=E(X|\mathcal{F}_{a,b}).
\]
In addition we consider that the filtration is commuting in the sense that
\begin{equation}
E_{u,v}E_{a,b}X=E_{a\wedge u,b\wedge v}X, \mathbb{\ } \label{pcf}%
\end{equation}
where the symbol $a\wedge b$ stands for the minimum between $a$ and $b$. As we
mentioned before, this type of filtration is induced, for instance, by an
initial random field $(\xi_{n,m})_{n,m\in Z}$ of independent random variables
or more generally can be induced by stationary random fields $(\xi
_{n,m})_{n,m\in Z}$ where only the columns are independent, i.e. $\bar{\eta
}_{m}=(\xi_{n,m})_{n\in Z}$ are independent. This model often appears in
statistical applications when one deals with repeated realizations of a
stationary sequence.

It is interesting to point out that commuting filtrations can be described by
the equivalent formulation: for $a\geq u$ we have
\[
E_{u,v}E_{a,b}X=E_{u,b\wedge v}X.
\]
This follows from the Markovian-type property (see for instance Problem 34.11
in Billingsley, 1995).\newline

Without restricting the generality we shall define $(\mathbf{\xi}_{\mathbf{u}%
})_{\mathbf{u}\in Z^{2}}$ in a canonical way on the probability space $\Omega$
$=R^{Z^{2}}$, endowed with the $\sigma-$field, $\mathcal{B}(\Omega),$
generated by cylinders.
Now on $R^{Z^{2}}$ we shall introduce the operators%
\[
T^{\mathbf{u}}((x_{\mathbf{v}})_{\mathbf{v}\in Z^{2}})=(x_{\mathbf{v+u}%
})_{\mathbf{v}\in Z^{2}}.
\]
Two of them will play an important role in our paper namely, when
$\mathbf{u=}(1,0)$ and when $\mathbf{u=}(0,1).$ By interpreting the indexes as
notations for the lines and columns of a matrix, we shall call%

\[
T((x_{u,v})_{(u,v)\in Z^{2}})=(x_{u+1,v})_{(u,v)\in Z^{2}}%
\]
the vertical shift and%
\[
S((x_{u,v})_{(u,v)\in Z^{2}})=(x_{u,v+1})_{(u,v)\in Z^{2}}%
\]
the horizontal shift.\newline

Now we introduce the stationary random field $(X_{\mathbf{m}})_{\mathbf{m}\in
Z^{2}}$ in the following way. For a real-valued measurable function $f$ on
$R^{N^{2}}$, we define
\begin{equation}
X_{j,k}=f(T^{j}S^{k}(\mathbf{\xi}_{a,b})_{a\leq0, b\leq0}). \label{defXfield}%
\end{equation}

The variable $X_{0,0}$ will be assumed to be square integrable (in $L^{2}$)
and with mean $0.$ We notice that the variables $(X_{n,m})_{n,m\in Z}$ are
adapted to the filtration $(\mathcal{F}_{n,m})_{n,m\in Z}$. \newline

Let $\phi:[0,\infty)\rightarrow\lbrack0,\infty)$ be a Young function, that is,
a convex function satisfying
\[
\lim_{x\rightarrow0}\frac{\phi(x)}{x}=0\text{ and }\lim_{x\rightarrow\infty
}\frac{\phi(x)}{x}=\infty.
\]
We shall define the Luxemburg norm associated with $\phi$ which will be needed
in the sequel. For any measurable function $f$ from $\Omega$ to $R$, the
Luxemburg norm of $f$ is defined by (see relation 9.18 and 9.19 on page 79 of
Krasnosel'skii and Rutitskii (1961))
\begin{equation}
\lVert f \rVert_{\phi}=\inf\{{k\in(0,\infty)}:E\phi(|f|/k)\leq1\}.
\label{phi norm}%
\end{equation}
In the sequel, we use the notations%
\[
S_{k,j}=\sum\nolimits_{u,v=1}^{k,j}X_{u,v},\ P^{\omega}(\cdot)=P(\cdot
|\mathcal{F}_{0,0})({\omega}) \text{ for any } \omega\in\Omega.
\]
Also, we shall denote by $E^{\omega}$ the expectation corresponding to
$P^{\omega}$ and $\ \Rightarrow$ the convergence in distribution.

For an integrable random variable $X$, we introduce the projection operators
defined by%
\[
P_{\tilde{0},0}(X):=(E_{0,0}-E_{-1,0})(X)
\]%
\[
P_{0,\tilde{0}}(X):=(E_{0,0}-E_{0,-1})(X).
\]
Note that, by (\ref{pcf}), we have%
\[
{\mathcal{P}}_{{\mathbf{0}}}(X):=P_{\tilde{0},0}\circ P_{0,\tilde{0}%
}(X)=P_{0,\tilde{0}}\circ P_{\tilde{0},0}(X)=(E_{0,0}-E_{0,-1}-E_{-1,0}%
+E_{-1,-1})(X).
\]

Then for $(u,v)\in Z^{2}$, we can define the projections $\mathcal{P}_{u,v}$
as follows
\[
\mathcal{P}_{u,v}(\cdot):=(E_{u,v}-E_{u,v-1}-E_{u-1,v}+E_{u-1,v-1})(\cdot)
\]

We shall introduce the definition of an ortho-martingale, which will be
referred to as a martingale with multiple indexes or simply martingale.

\begin{definition}
Let $d$ be a function and define
\begin{equation}
D_{n,m}=d(\xi_{i,j},i\leq n,j\leq m). \label{defD}%
\end{equation}
Assume integrability. We say that $(D_{n,m})_{n,m\in Z}$ is a field of
martingale differences if $E_{a,b}(D_{n,m})=0$ if either $a<n$ or $b<m.$
\end{definition}

Set%
\[
M_{k,j}=\sum\nolimits_{u,v=1}^{k,j}D_{u,v}.
\]

\begin{definition}
We say that a random field $(X_{n,m})_{n,m\in Z}$ defined by (\ref{defXfield})
admits a martingale approximation if there is a field of martingale
differences $(D_{n,m})_{n,m\in Z}$ defined by (\ref{defD})\ such that
\begin{equation}
\lim_{n\wedge m\rightarrow\infty}\frac{1}{nm}E^{\omega}(S_{n,m}-M_{n,m})^{2}=0
\text{ for almost all } \ \omega\in\Omega. \label{martapprx}%
\end{equation}

\end{definition}

The following theorem is an extension of the quenched CLT for
ortho-martingales in Peligrad and Voln\'{y} (2018) to stationary random fields
satisfying the generalized Hannan condition (1973). It also can be viewed as a
random field version of Proposition 11 in Cuny and Peligrad (2012) (see also
Voln\'{y} and Woodroofe (2014)).\newline

Throughout the paper we shall assume the setting above namely:\newline

\textbf{Condition A.} $(X_{n,m})_{n,m\in Z}$ is defined by (\ref{defXfield}),
the filtrations are commuting and either $T$ or $S$ is ergodic.

\begin{theorem}
\label{Thm1nn}Assume Condition A and in addition%
\begin{equation}
\sum_{u,v\geq0}\lVert\mathcal{P}_{0,0}(X_{u,v}) \rVert_{2}<\infty.
\label{cond}%
\end{equation}
Then, for almost all $\omega\in\Omega,$%
\[
\frac{1}{n}\bar{S}_{n,n}\Rightarrow N(0,\sigma^{2})\text{ under }P^{\omega
}\text{ when }n\rightarrow\infty.
\]
where $\bar{S}_{n,n}=S_{n,n}-R_{n,n}$ with $R_{n,n}=E_{n,0}(S_{n,n}%
)+E_{0,n}(S_{n,n})-E_{0,0}(S_{n,n})$ and
\[
\sigma^{2}=\lVert\sum_{u,v\geq0}\mathcal{P}_{0,0}(X_{u,v})\rVert_{2}^{2}
=\lim_{n\wedge m\rightarrow\infty}\frac{E(\bar{S}_{n,n}^{2})}{n^{2}}.
\]
$.$
\end{theorem}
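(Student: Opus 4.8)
The plan is to derive the quenched CLT from an ortho-martingale approximation combined with the quenched CLT for ortho-martingales of Peligrad and Voln\'{y} (2018). First I would build the approximating martingale via Hannan's projective decomposition: set
\[
D_{0,0}=\sum_{u,v\geq 0}\mathcal{P}_{0,0}(X_{u,v}),\qquad D_{k,j}=D_{0,0}\circ T^{k}S^{j}.
\]
The defining series converges in $L^{2}$ because condition (\ref{cond}) makes it absolutely summable in $\lVert\cdot\rVert_{2}$ (the summands all lie in the single subspace $\mathcal{P}_{0,0}(L^{2})$, so no orthogonality is needed, only the triangle inequality). I would then check that $(D_{k,j})_{k,j\in Z}$ is a field of martingale differences in the sense of the first Definition, i.e. $E_{a,b}(D_{k,j})=0$ whenever $a<k$ or $b<j$: by construction each $\mathcal{P}_{0,0}(X_{u,v})$ is annihilated by $E_{-1,0}$ and by $E_{0,-1}$, and the commuting relation (\ref{pcf}) guarantees that this is preserved both under the $L^{2}$ limit and under the shifts $T,S$.

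Second, and this is where I expect the main difficulty, I would show that $(D_{k,j})$ provides a \emph{quenched} approximation for the centered sums, namely
\[
\frac{1}{n^{2}}E^{\omega}\bigl(\bar{S}_{n,n}-M_{n,n}\bigr)^{2}\longrightarrow 0\quad\text{for almost every }\omega .
\]
The correction $R_{n,n}=E_{n,0}(S_{n,n})+E_{0,n}(S_{n,n})-E_{0,0}(S_{n,n})$ is designed precisely to remove the boundary co-boundary terms that separate $S_{n,n}$ from its martingale part, in analogy with the one-dimensional Hannan scheme. The delicate point is that one needs almost sure convergence of the conditional error, not merely convergence of its expectation. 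I would expand $\bar{S}_{n,n}-M_{n,n}$ along the projective decomposition into finitely many blocks; taking the conditional second moment $E^{\omega}(\cdot)$ and using orthogonality of martingale differences, each block reduces to a two dimensional Ces\`{a}ro (ergodic) average of a stationary sequence whose generating function has $\lVert\cdot\rVert_{2}$ equal to a tail of the series in (\ref{cond}). Applying the Dunford--Schwartz pointwise ergodic theorem to the operators induced by $T$ and $S$ (here the hypothesis that one of $T,S$ is ergodic enters), each average converges almost surely to its mean, and these means are bounded by $\sum_{u\vee v\geq N}\lVert\mathcal{P}_{0,0}(X_{u,v})\rVert_{2}\to 0$. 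The obstacle is to arrange the expansion so that a single application of the ergodic theorem controls the whole error simultaneously, since countably many exceptional null sets would destroy the almost sure statement, and to manage the interaction of the two shift directions through the commuting structure.

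Finally, with the quenched $L^{2}$ approximation established, the problem reduces to the quenched CLT for the ortho-martingale $M_{n,n}$, which I would supply by the ortho-martingale quenched CLT of Peligrad and Voln\'{y} (2018); this requires verifying that the constructed differences $D_{0,0}$ meet the moment requirement of that theorem, a point where the full strength of the double summability in (\ref{cond}) must be used. The limiting variance is identified as $\sigma^{2}=\lVert\sum_{u,v\geq 0}\mathcal{P}_{0,0}(X_{u,v})\rVert_{2}^{2}=\lVert D_{0,0}\rVert_{2}^{2}$, which equals $\lim_{n}n^{-2}E(\bar{S}_{n,n}^{2})$ as a by-product of the approximation. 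Since a quenched distributional limit is unaffected by a perturbation that is negligible in quenched $L^{2}$ (Slutsky's argument applied under $P^{\omega}$ for almost every $\omega$), we conclude that $n^{-1}\bar{S}_{n,n}\Rightarrow N(0,\sigma^{2})$ under $P^{\omega}$ for almost every $\omega$, as claimed.
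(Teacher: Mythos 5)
Your proposal follows essentially the same route as the paper's proof: construct the Hannan-type martingale differences $D_{0,0}=\sum_{u,v\geq 0}\mathcal{P}_{0,0}(X_{u,v})$ and their shifts, establish the quenched $L^{2}$ negligibility of $\bar{S}_{n,n}-M_{n,n}$ via the projective decomposition, orthogonality under $E^{\omega}$, and the Dunford--Schwartz ergodic theorem (for squares, the $L^{1}$ version, Krengel Ch.~6, Thm.~2.8) with a truncation parameter $c$ whose tail means vanish, and then invoke the Peligrad--Voln\'{y} quenched CLT for ortho-martingales together with a Slutsky argument under $P^{\omega}$. The only inaccuracy is your concern that ``countably many exceptional null sets would destroy the almost sure statement'': countably many null sets are harmless, and indeed the paper applies the ergodic theorem once for each fixed truncation level $c$ and then lets $c\rightarrow\infty$ in the resulting constant limits.
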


In Theorem \ref{Thm1nn} the random centering $R_{n,n}$ cannot be avoided. As a
matter of fact, for $d=1,$ Voln\'{y} and Woodroofe (2010) constructed an
example showing that the CLT\ for partial sums need not be quenched. It should
also be noticed that, for a stationary ortho-martingale, the existence of
finite second moment is not enough for the validity of a quenched CLT when the
summation in taken on rectangles (see Peligrad and Voln\'{y} (2018)). In order
to assure the validity of a martingale approximation with a suitable moment
condition we shall reinforce condition (\ref{cond}) when dealing with indexes
$n$ and $m$ which converge independently to infinity.

\begin{theorem}
\label{Thm2nm} Assume now that (\ref{cond}) is reinforced to%
\begin{equation}
\sum_{u,v\geq0}\lVert\mathcal{P}_{0,0}(X_{u,v})\rVert_{\phi}<\infty,
\label{main condi}%
\end{equation}
where $\phi(x)=x^{2}\log(1+|x|)$ and $\lVert\cdot\rVert_{\phi}$ is defined by
(\ref{phi norm}). Then, for almost all $\omega\in\Omega,$
\begin{equation}
\frac{1}{(nm)^{1/2}}\bar{S}_{n,m}\Rightarrow N(0,\sigma^{2})\text{ under
}P^{\omega}\text{ when }n\wedge m\rightarrow\infty, \label{QCLTnm}%
\end{equation}
where $\bar{S}_{n,m}=S_{n,m}-R_{n,m}$ with $R_{n,m}=E_{n,0}(S_{n,m}%
)+E_{0,m}(S_{n,m})-E_{0,0}(S_{n,m})$ and
\[
\sigma^{2}=\lVert\sum_{u,v\geq0}\mathcal{P}_{0,0}(X_{u,v})\rVert_{2}^{2}
=\lim_{n\wedge m\rightarrow\infty}\frac{E(\bar{S}_{n,m}^{2})}{nm}.
\]

\end{theorem}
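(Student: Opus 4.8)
The plan is to reduce the statement to the quenched CLT for ortho-martingales of Peligrad and Voln\'{y} (2018) by means of a \emph{quenched} ortho-martingale approximation, in the spirit of Hannan's method. Following that scheme, I would take as candidate martingale difference
\[
D_{0,0}=\sum_{u,v\geq 0}\mathcal{P}_{0,0}(X_{u,v}),
\]
and set $D_{k,j}=D_{0,0}\circ T^{k}S^{j}$, $M_{n,m}=\sum_{u,v=1}^{n,m}D_{u,v}$. The series defining $D_{0,0}$ converges in $L^{2}$ by (\ref{cond}) (which (\ref{main condi}) implies), so $D_{0,0}$ is well defined, $\mathcal{F}_{0,0}$-measurable and centered. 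Writing $\mathcal{P}_{0,0}=P_{\tilde 0,0}\circ P_{0,\tilde 0}$ and using the commuting property (\ref{pcf}), one checks termwise that $E_{a,b}(D_{0,0})=0$ whenever $a<0$ or $b<0$; hence $(D_{n,m})$ is a genuine field of martingale differences in the sense of the definition given above, and $\sigma^{2}=\lVert D_{0,0}\rVert_{2}^{2}=\lVert\sum_{u,v\geq 0}\mathcal{P}_{0,0}(X_{u,v})\rVert_{2}^{2}$, which matches $\lim E(\bar S_{n,m}^{2})/(nm)$ since orthogonality of the lattice increments gives $E(M_{n,m}^{2})=nm\,\lVert D_{0,0}\rVert_{2}^{2}$.

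\textbf{Moment condition for the martingale CLT.} The role of the reinforced hypothesis (\ref{main condi}) is to furnish the Orlicz moment bound that the ortho-martingale quenched CLT requires in dimension $d=2$, namely $E\big(D_{0,0}^{2}\log(1+|D_{0,0}|)\big)<\infty$. Since the Luxemburg norm $\lVert\cdot\rVert_{\phi}$ with $\phi(x)=x^{2}\log(1+|x|)$ obeys the triangle inequality, (\ref{main condi}) yields $\lVert D_{0,0}\rVert_{\phi}\leq\sum_{u,v\geq 0}\lVert\mathcal{P}_{0,0}(X_{u,v})\rVert_{\phi}<\infty$, i.e. $D_{0,0}\in L_{\phi}$, which is exactly the minimal moment condition in Peligrad and Voln\'{y} (2018). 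Applying their quenched CLT for ortho-martingales to $(D_{u,v})$ then gives, for almost every $\omega$,
\[
\frac{1}{(nm)^{1/2}}M_{n,m}\Rightarrow N(0,\sigma^{2})\quad\text{under }P^{\omega}\text{ as }n\wedge m\to\infty .
\]

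\textbf{Quenched martingale approximation (the heart of the proof).} The key step is to establish
\[
\frac{1}{nm}\,E^{\omega}\big(\bar S_{n,m}-M_{n,m}\big)^{2}\longrightarrow 0\quad\text{for a.e. }\omega,\ \text{as }n\wedge m\to\infty .
\]
I would expand $X_{0,0}$ through the two-dimensional projective telescoping attached to the $\mathcal{P}_{u,v}$, so that $S_{n,m}-M_{n,m}$ becomes a sum of coboundary-type terms in each coordinate direction. The random centering $R_{n,m}=E_{n,0}(S_{n,m})+E_{0,m}(S_{n,m})-E_{0,0}(S_{n,m})$ is designed precisely to absorb the boundary contributions carried by the mixed $\sigma$-fields $\mathcal{F}_{n,0}$ and $\mathcal{F}_{0,m}$, which would otherwise destroy the quenched convergence; once they are removed, the remainder $\bar S_{n,m}-M_{n,m}$ is dominated by tail sums $\sum_{u+v\geq N}\lVert\mathcal{P}_{0,0}(X_{u,v})\rVert_{\phi}$. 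To upgrade the $L^{2}(P)$ smallness of these tails to the quenched ($P^{\omega}$) almost-sure statement, I would invoke the ergodic theorem for Dunford--Schwartz operators applied to the two commuting contractions induced by $T$ and $S$; ergodicity of at least one of them (Condition A) identifies the limit and pins down the deterministic constant $\sigma^{2}$.

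\textbf{Conclusion and the main obstacle.} Combining the last two displays by a quenched Slutsky argument (for a.e.\ $\omega$, the $L^{2}(P^{\omega})$ smallness of $(\bar S_{n,m}-M_{n,m})/(nm)^{1/2}$ forces convergence to $0$ in $P^{\omega}$-probability) yields (\ref{QCLTnm}). The principal difficulty—and the reason (\ref{cond}) must be strengthened to (\ref{main condi})—lies in the approximation step when $n$ and $m$ diverge \emph{independently}: unlike the diagonal regime $n=m$ of Theorem \ref{Thm1nn}, no single-parameter ergodic theorem suffices, and the two-parameter maximal control of the quenched variance of the remainder is available only under the $L^{2}\log L$ Orlicz integrability provided by (\ref{main condi}). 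A secondary technical point is the careful verification, via the commuting relation (\ref{pcf}), that every term folded into $R_{n,m}$ is negligible after normalization by $(nm)^{1/2}$.
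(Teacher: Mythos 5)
Your proposal is correct and follows essentially the same route as the paper: the same candidate martingale differences $D_{0,0}=\sum_{u,v\geq 0}\mathcal{P}_{0,0}(X_{u,v})$ shifted by $T$ and $S$, the reduction to the quenched ortho-martingale CLT of Peligrad and Voln\'{y} (2018) via the decomposition $S_{n,m}-R_{n,m}=\sum_{i,j}\mathcal{P}_{i,j}\bigl(\sum_{u\geq i}^{n}\sum_{v\geq j}^{m}X_{u,v}\bigr)$, and negligibility of the remainder proved by the ergodic theorem for two commuting Dunford--Schwartz operators, which is exactly where the $L^{2}\log L$ integrability from (\ref{main condi}) is required and where ergodicity of $T$ or $S$ identifies the limiting constant. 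You also correctly isolate the two roles of the reinforced condition (moment bound for the ortho-martingale CLT over rectangles, and the multiparameter ergodic theorem), matching the paper's argument in all essential respects.
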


The random centering is not needed if we impose two regularity conditions.

\begin{corollary}
\label{Cor} Assume that the conditions of Theorem \ref{Thm2nm} hold. If
\begin{equation}
\frac{E_{0,0}\left(  E_{0,m}^{2}(S_{n,m})\right)  }{nm}\rightarrow0\text{ a.s.
\ }\text{and }\ \frac{E_{0,0}\left(  E_{n,0}^{2}(S_{n,m})\right)  }%
{nm}\rightarrow0\text{ a.s. }\text{ when }n\wedge m\rightarrow\infty,
\label{regularity}%
\end{equation}
then for almost all $\omega\in\Omega,$
\begin{equation}
\frac{1}{(nm)^{1/2}}S_{n,m}\Rightarrow N(0,\sigma^{2})\text{ under }P^{\omega
}\text{ when }n\wedge m\rightarrow\infty. \label{quenched mn}%
\end{equation}
If the conditions of Theorem \ref{Thm1nn} hold and (\ref{regularity}) holds
with $m=n,$ then for almost all $\omega\in\Omega$,%
\begin{equation}
\frac{1}{n}S_{n,n}\Rightarrow N(0,\sigma^{2})\text{ under }P^{\omega}\text{
when }n\rightarrow\infty. \label{Quenched nn}%
\end{equation}

\end{corollary}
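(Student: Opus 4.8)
The plan is to deduce the corollary from Theorem~\ref{Thm2nm} (respectively Theorem~\ref{Thm1nn}) by a Slutsky-type argument under the quenched measure $P^{\omega}$, the whole point being that the regularity conditions (\ref{regularity}) force the random centering $R_{n,m}$ to be negligible in $L^{2}(P^{\omega})$ for almost every $\omega$. I start from the decomposition $S_{n,m}=\bar{S}_{n,m}+R_{n,m}$, where $\bar{S}_{n,m}/(nm)^{1/2}\Rightarrow N(0,\sigma^{2})$ under $P^{\omega}$ for almost all $\omega$ is already granted by Theorem~\ref{Thm2nm}. Hence it suffices to show that $R_{n,m}/(nm)^{1/2}\to 0$ in $P^{\omega}$-probability for almost all $\omega$, and then conclude that the two normalized sums share the same weak limit under $P^{\omega}$.

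The key observation is the identity $E^{\omega}(Y)=E_{0,0}(Y)(\omega)$, which translates the hypotheses (\ref{regularity}) into statements about $L^{2}(P^{\omega})$-norms. Indeed, the first condition in (\ref{regularity}) reads exactly $E^{\omega}\big((E_{0,m}(S_{n,m}))^{2}\big)/(nm)\to 0$ a.s., and the second reads $E^{\omega}\big((E_{n,0}(S_{n,m}))^{2}\big)/(nm)\to 0$ a.s. Recalling that $R_{n,m}=E_{n,0}(S_{n,m})+E_{0,m}(S_{n,m})-E_{0,0}(S_{n,m})$, I would control the first two summands directly by these two conditions via the triangle inequality in $L^{2}(P^{\omega})$.

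The only term needing an extra step is $E_{0,0}(S_{n,m})$. Here I would invoke the commuting property (\ref{pcf}): since $0\le m$, one has $E_{0,0}E_{0,m}=E_{0,0}$, so $E_{0,0}(S_{n,m})=E_{0,0}\big(E_{0,m}(S_{n,m})\big)$, and conditional Jensen gives $\big(E_{0,0}(S_{n,m})\big)^{2}\le E_{0,0}\big(E_{0,m}^{2}(S_{n,m})\big)$. Taking $E^{\omega}$ (equivalently, evaluating the $\mathcal{F}_{0,0}$-measurable quantity at $\omega$) and dividing by $nm$, this third contribution is dominated by the first condition in (\ref{regularity}) and therefore also tends to $0$ a.s. Combining the three bounds yields $E^{\omega}\big(R_{n,m}^{2}\big)/(nm)\to 0$ for almost all $\omega$, whence $R_{n,m}/(nm)^{1/2}\to 0$ in $P^{\omega}$-probability by Chebyshev.

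Finally I would apply Slutsky's theorem under $P^{\omega}$ on the almost-sure intersection of the good $\omega$-set from Theorem~\ref{Thm2nm} and the good $\omega$-set just obtained: since $\bar{S}_{n,m}/(nm)^{1/2}\Rightarrow N(0,\sigma^{2})$ and $R_{n,m}/(nm)^{1/2}\to 0$ in probability, the sum $S_{n,m}/(nm)^{1/2}$ converges weakly to $N(0,\sigma^{2})$, giving (\ref{quenched mn}). For the diagonal statement (\ref{Quenched nn}) I would run the identical argument with $m=n$, replacing the appeal to Theorem~\ref{Thm2nm} by Theorem~\ref{Thm1nn} and using (\ref{regularity}) specialized to $m=n$. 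I expect the main (though modest) obstacle to be the careful bookkeeping of the third centering term: one must use the commuting structure rather than a crude bound, since a priori $E_{0,0}(S_{n,m})$ is not separately controlled by (\ref{regularity}); everything else is a routine Slutsky/Chebyshev reduction on a full-measure set of trajectories.
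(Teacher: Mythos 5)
Your proposal is correct and takes essentially the same route as the paper: both arguments reduce, via a Slutsky/convergence-together theorem applied under $P^{\omega}$ to the decomposition $S_{n,m}=\bar{S}_{n,m}+R_{n,m}$, to showing $E_{0,0}(R_{n,m}^{2})/nm\rightarrow0$ a.s., and both handle the term $E_{0,0}(S_{n,m})$ by the tower property together with conditional Jensen, which dominates it by $E_{0,0}\left(E_{0,m}^{2}(S_{n,m})\right)$. The only minor difference is that the paper derives the exact identity $E_{0,0}(R_{n,m}^{2})=E_{0,0}\left(E_{n,0}^{2}(S_{n,m})\right)+E_{0,0}\left(E_{0,m}^{2}(S_{n,m})\right)-E_{0,0}^{2}(S_{n,m})$ by computing the cross terms with the commuting-filtration property, whereas you bypass that computation with the triangle inequality in $L^{2}(P^{\omega})$; both are valid.
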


For the sake of applications, we provide a sufficient condition which takes
care of the regularity assumptions (\ref{regularity}).

\begin{theorem}
\label{Cor mn}Assume that
\begin{equation}
\sum_{u,v\geq1}\frac{\lVert E_{1,1}(X_{u,v})\rVert_{2}}{(uv)^{1/2}}<\infty.
\label{higher moment 2}%
\end{equation}
(a) Then for almost all $\omega\in\Omega$ (\ref{Quenched nn})
holds.\ \ \ \ \ \ \ \ \ \ \ \ \ \ \ \ \ \ \ \ \ \ \ \ \ \ \ \ \ \ \ \ \ \ \ \ \ \ \ \ \ \ \ \ \ \ \ \ \ \ \ \ \ \ \ \ \ \ \ \ \ \ \ \ \ \ \ \ \ \ \ \ \ \ \ \ \ \ \ \ \ \ \ \ \ \ \ \ \ \ \ \ \ \ \ \ \ \ \ \ \ \ \ \ \ \ \ \ \ \ \ \ \ \ \ \ \ \ \ \ \ \ \ \ \ \linebreak%
(b) If in addition (\ref{main condi}) is satisfied, then for almost all
$\omega\in\Omega$ (\ref{quenched mn}) holds.
\ \ \ \ \ \ \ \ \ \ \ \ \ \ \ \ \ \ \ \ \ \ \ \ \ \ \ \ \ \ \ \ \ \ \ \ \ \ \ \ \ \ \ \ \ \ \ \ \ \ \ \ \ \ \ \ \ \ \ \ \ \ \ \ \ \ \ \ \ \ \ \ \ \linebreak%
(c) If for some $q>2$
\begin{equation}
\sum_{u,v\geq1}\frac{\lVert E_{1,1}(X_{u,v})\rVert_{q}}{(uv)^{1/q}}<\infty,
\label{higher moment q}%
\end{equation}
then the quenched convergence in (\ref{quenched mn}) holds.
\end{theorem}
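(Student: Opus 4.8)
The plan is to obtain all three parts from Corollary \ref{Cor}, by showing that the stated summability hypotheses force both the relevant projective condition (namely (\ref{cond}) for part (a) and (\ref{main condi}) for parts (b) and (c)) and, in the appropriate mode of convergence, the regularity condition (\ref{regularity}). Several reductions between the hypotheses are immediate. Since $q>2$ gives $\lVert\cdot\rVert_{2}\leq\lVert\cdot\rVert_{q}$ on a probability space and $(uv)^{-1/2}\leq(uv)^{-1/q}$, condition (\ref{higher moment q}) implies (\ref{higher moment 2}); hence part (c) reduces to part (b) once we also check that (\ref{higher moment q}) implies (\ref{main condi}). For that implication I would use the continuous embedding $L^{q}\hookrightarrow L_{\phi}$ with $\phi(x)=x^{2}\log(1+|x|)$ (valid on a probability space because $\phi(x)=o(x^{q})$ as $x\to\infty$, so $\lVert\cdot\rVert_{\phi}\leq C_{q}\lVert\cdot\rVert_{q}$), the identity $\mathcal{P}_{0,0}(X_{u,v})=\mathcal{P}_{0,0}\big(E_{1,1}(X_{u,v})\big)$ for $u,v\geq1$ (all four conditioning $\sigma$-fields in $\mathcal{P}_{0,0}$ sit inside $\mathcal{F}_{1,1}$), the $L^{q}$-boundedness of the projection $\mathcal{P}_{0,0}$, and finally a Hardy-type inequality converting the weighted $q$-sum of (\ref{higher moment q}) into the unweighted sum of (\ref{main condi}).

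This leaves two genuinely new tasks: (i) (\ref{higher moment 2})$\Rightarrow$(\ref{cond}), needed so that Theorem \ref{Thm1nn} may be invoked in part (a); and (ii) (\ref{higher moment 2})$\Rightarrow$(\ref{regularity}). For (i) I would start from the orthogonal expansion $E_{0,0}(X_{u,v})=\sum_{a\geq u,\,b\geq v}\mathcal{P}_{0,0}(X_{a,b})$, which by orthogonality of the projections yields $\lVert E_{0,0}(X_{u,v})\rVert_{2}^{2}=\sum_{a\geq u,\,b\geq v}\lVert\mathcal{P}_{0,0}(X_{a,b})\rVert_{2}^{2}$, together with the stationarity identity $\lVert E_{1,1}(X_{u,v})\rVert_{2}=\lVert E_{0,0}(X_{u-1,v-1})\rVert_{2}$. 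A two-dimensional Hardy inequality then bounds $\sum_{u,v\geq0}\lVert\mathcal{P}_{0,0}(X_{u,v})\rVert_{2}$ by a constant multiple of the weighted tail-norm series, i.e. by (\ref{higher moment 2}); the weight $(uv)^{-1/2}$ is exactly the exponent that makes this Hardy inequality scale-invariant, which explains its appearance.

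For (ii) the starting point is the reduction $E_{0,m}(S_{n,m})=\sum_{u=1}^{n}\sum_{v=1}^{m}E_{0,v}(X_{u,v})$, valid by adaptedness and the commuting property (\ref{pcf}) since $E_{0,m}(X_{u,v})=E_{0\wedge u,\,m\wedge v}(X_{u,v})=E_{0,v}(X_{u,v})$ for $1\leq u\leq n$, $1\leq v\leq m$. I would first prove the annealed estimate $\frac{1}{nm}\lVert E_{0,m}(S_{n,m})\rVert_{2}^{2}\to0$: writing $E_{0,v}(X_{u,v})=\sum_{j\leq v}(E_{0,j}-E_{0,j-1})(X_{u,v})$ and using the orthogonality across $j$ of these column increments, a direct second-moment computation dominates the normalized norm by a two-parameter average of the quantities $\lVert E_{1,1}(X_{u,v})\rVert_{2}$, which tends to $0$ by the two-dimensional Kronecker lemma applied to the convergent series (\ref{higher moment 2}). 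Interchanging the two coordinates, which is legitimate because the hypotheses are symmetric in $(u,v)$, disposes of the companion term involving $E_{n,0}(S_{n,m})$.

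The main obstacle is upgrading this $L^{2}$ (that is, $L^{1}$-mean) bound to the almost sure statement $\frac{1}{nm}E_{0,0}\big(E_{0,m}^{2}(S_{n,m})\big)\to0$ that (\ref{regularity}) actually requires. Here I would represent the conditional quantity as a multiparameter ergodic average of a fixed integrable function built from the projections, and invoke the pointwise ergodic theorem for the Dunford-Schwartz operator induced by the shift together with $E_{0,0}$; the almost sure limit then has to coincide with the mean limit, which is $0$ by the annealed bound. The subtle point is the mode in which the two indices tend to infinity. Along the diagonal $m=n$ (part (a)) the average is effectively one-parameter and mere integrability, guaranteed by (\ref{higher moment 2}), suffices. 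For the rectangular limit $n\wedge m\to\infty$ with $n$ and $m$ independent (parts (b) and (c)) one needs the unrestricted two-parameter pointwise ergodic theorem, whose validity requires the $L\log L$ (Wiener-Zygmund) strengthening of integrability; this is precisely the role played by the Luxemburg condition (\ref{main condi}) in part (b) and by the stronger $L^{q}$ hypothesis in part (c). Once (\ref{regularity}) is secured in the correct mode, parts (a), (b) and (c) follow by quoting the matching conclusions of Corollary \ref{Cor}.
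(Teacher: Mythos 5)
Your overall architecture (verify the projective condition plus the regularity condition (\ref{regularity}), then quote Corollary \ref{Cor}) is exactly the paper's, and your step (i) — deducing (\ref{cond}) from (\ref{higher moment 2}) via the orthogonal tail identity, dyadic blocking and monotonicity — is essentially the paper's Lemma \ref{fact3} in the case $q=2$. But there are two genuine gaps. First, in part (c) you need (\ref{higher moment q})$\Rightarrow$(\ref{main condi}), and your proposed tools ($L^{q}$-boundedness of $\mathcal{P}_{0,0}$ plus ``a Hardy-type inequality'') do not suffice when $q>2$. The dyadic argument requires, on each dyadic rectangle, the reverse bound $\sum_{k,k'}\lVert\mathcal{P}_{-k,-k'}(X_{0,0})\rVert_{q}^{q}\leq C_{q}\lVert\sum_{k,k'}\mathcal{P}_{-k,-k'}(X_{0,0})\rVert_{q}^{q}$, which for $q=2$ is Pythagoras (your orthogonality) but for $q>2$ is the lower Rosenthal inequality for the martingale-difference array $(\mathcal{P}_{-k,-k'}(X_{0,0}))$ — an ingredient you never invoke. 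Termwise domination cannot replace it: $\lVert\mathcal{P}_{0,0}(X_{u,v})\rVert_{q}\leq 4\lVert E_{1,1}(X_{u,v})\rVert_{q}$ is true, but a nonincreasing array with $\lVert E_{-u,-v}(X_{0,0})\rVert_{q}\asymp (uv)^{1/q-1}(\log(1+u)\log(1+v))^{-2}$ satisfies (\ref{higher moment q}) while the unweighted sum of these bounds diverges. The paper's Lemma \ref{fact3} is precisely this double-Rosenthal-plus-dyadic argument.

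The more serious gap is the almost sure upgrade in step (ii), which is the heart of a quenched statement. The quantity $\frac{1}{nm}E_{0,0}\bigl(E_{0,m}^{2}(S_{n,m})\bigr)$ is \emph{not} an ergodic average of a fixed integrable function: after writing $E_{0,m}(S_{n,m})-E_{0,0}(S_{n,m})=\sum_{j=1}^{m}P_{0,\tilde{j}}\bigl(\sum_{u=1}^{n}\sum_{v=j}^{m}X_{u,v}\bigr)$, the function to which the $j$-th power of the Dunford--Schwartz operator $Q_{0}(f)=E_{0,0}(\widehat{S}f)$ is applied still depends on $n$, $m$ and $j$. Hence ``the a.s.\ limit exists by the ergodic theorem and must equal the annealed limit $0$'' is circular: existence of the a.s.\ limit is exactly what is missing. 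The argument that works (the paper's Lemma \ref{fact2}) dominates the expression by one-parameter averages $\frac{1}{m}\sum_{j=1}^{m}Q_{0}^{j}$ of \emph{fixed} cutoff-dependent functions: $g^{2}(u)$ with $u\leq c$, whose contribution carries the vanishing prefactor $c^{2}/n$, and the tail function $h^{2}(c)$ with $h(c)=\sum_{u\geq c}u^{-1/2}\sum_{v\geq0}|P_{0,\tilde{0}}(X_{u,v})|$; the ergodic limits are the expectations $E(g^{2}(u))$ and $E(h^{2}(c))$ — not zero — and one concludes by letting $c\rightarrow\infty$, using $E(h^{2}(c))\rightarrow0$. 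That last step needs the weighted summability $\sum_{u\geq1}u^{-1/2}\sum_{v\geq0}\lVert P_{0,\tilde{0}}(X_{u,v})\rVert_{2}<\infty$ (the paper's Lemma \ref{fact1}, again an orthogonality/Rosenthal-plus-dyadic statement), which is absent from your sketch. A byproduct of this scheme is that your mode-of-convergence diagnosis is off: since only one-parameter ergodic averages occur (the $n$-dependence is absorbed by the $u^{-1/2}$ weights), the regularity (\ref{regularity}) holds in the full rectangular mode $n\wedge m\rightarrow\infty$ under (\ref{higher moment 2}) alone; no two-parameter Wiener--Zygmund ($L\log L$) ergodic theorem is needed there. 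The condition (\ref{main condi}) is needed only where you already route it, namely inside Theorem \ref{Thm2nm} for the rectangular martingale approximation and the ortho-martingale quenched CLT.
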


\begin{remark}
\label{sigma^2 copy(1)} In Corollary \ref{Cor} and Theorem \ref{Cor mn},
$\sigma^{2}$ can be identified as $\sigma^{2}=\lim_{n\wedge m\rightarrow
\infty}E S_{n,m}^{2}/nm$ ($\lim_{n\rightarrow\infty}E S_{n,n}^{2}/n^{2}$ respectively).
\end{remark}

\begin{remark}
Theorem \ref{Cor mn} can be viewed as an extension to the random fields of
Proposition 12 in Cuny and Peligrad (2012). As we shall see, the proof for
random fields is much more involved and requires several intermediary steps
and new ideas.
\end{remark}

\section{Proofs}

Let us point out the main idea of the proof. Since Peligrad and Voln\'{y}
(2018) proved a quenched CLT for ortho-martingales, we reduce the proof to the
existence of an almost sure ortho-martingale approximation. We prove first
Theorem \ref{Thm2nm}, since the proof of Theorem \ref{Thm1nn} is similar with
the exception that we use different ergodic theorems.

Let us denote by $\hat{T}$ and $\hat{S}$ the operators on $L^{2}$ defined by
$\hat{T}f=f\circ T$, $\hat{S}f=f\circ S.$\newline

\begin{proof}
[Proof of Theorem \ref{Thm2nm}]Starting from condition (\ref{main condi}), by
triangle inequality we have that
\begin{equation}
f_{\mathbf{0}}:=\sum_{u,v\geq0}|\mathcal{P}_{0,0}(X_{u,v})|<\infty\text{ a.s.
} \label{cvg of proj}%
\end{equation}
and
\begin{equation}
\lVert f_{\mathbf{0}}\rVert_{\phi}\leq\sum_{u,v\geq0}\lVert\mathcal{P}%
_{0,0}(X_{u,v})\rVert_{\phi}<\infty. \label{momf0}%
\end{equation}

Note that by (\ref{cvg of proj}) $\mathcal{P}_{1,1}(S_{n,m})$ is convergent
almost surely. Denote the pointwise limit by
\[
D_{1,1}=\lim_{n\wedge m\rightarrow\infty}\mathcal{P}_{1,1}(S_{n,m}%
)=\sum_{u,v\geq1}\mathcal{P}_{1,1}(X_{u,v}).
\]

Meanwhile, by the triangle inequality and (\ref{main condi}), we obtain
\[
\sup_{n,m\geq1}|\mathcal{P}_{1,1}(S_{n,m})|\leq\sum_{u,v\geq1}|\mathcal{P}%
_{1,1}(X_{u,v})| \ \ \text{a.s.}
\]
and
\[
E\biggl(  \sum_{u,v\geq1}|\mathcal{P}_{1,1}(X_{u,v})|\biggr)  ^{2}%
\leq\biggl(  \sum_{u,v\geq1}\lVert\mathcal{P}_{1,1}(X_{u,v})\rVert
_{2}\biggr)  ^{2}<\infty.
\]
Thus, by the dominated convergence theorem, $\mathcal{P}_{1,1}(S_{n,m})$
converges to $D_{1,1}$ a.s. and in $L^{2}(P)$ as $n\wedge m\rightarrow\infty$.

Since $E_{0,1}(\mathcal{P}_{1,1}(S_{n,m}))=0$ a.s. and $E_{1,0}(\mathcal{P}%
_{1,1}(S_{n,m}))=0$ a.s., by defining for every $i,j\in Z$, $D_{i,j}=\hat
{T}^{i-1}\hat{S}^{j-1}D_{1,1}$, we conclude that $(D_{i,j})_{i,j\in Z}$ is a
field of martingale differences. By the expression of $D_{1,1}$ above,
\[
D_{i,j}=\sum_{(u,v)\geq(i,j)}\mathcal{P}_{i,j}(X_{u,v}).
\]
Now we look into the decomposition of $S_{n,m}$ (see Peligrad and Zhang
(2018b) for details):%

\begin{equation}
S_{n,m}-R_{n,m}=\sum_{i=1}^{n}\sum_{j=1}^{m}{\mathcal{P}}_{i,j}({\sum
\limits_{u=i}^{n}}\sum\limits_{v=j}^{m}X_{u,v}) \label{ort dec}%
\end{equation}
where%
\[
R_{n,m}\ =E_{n,0}(S_{n,m})+E_{0,m}(S_{n,m})-E_{0,0}(S_{n,m}).
\]
Therefore
\[
\frac{S_{n,m}-R_{n,m}-M_{n,m}}{\sqrt{nm}}=\frac{1}{\sqrt{nm}}\sum_{i=1}%
^{n}\sum_{j=1}^{m}\biggl(\mathcal{P}_{i,j}({\sum\limits_{u=i}^{n}}%
\sum\limits_{v=j}^{m}X_{u,v})-D_{i,j}\biggr).
\]
By the orthogonality of the field of martingale differences $(\mathcal{P}%
_{i,j}({\sum\limits_{u=i}^{n}}\sum\limits_{v=j}^{m}X_{u,v})-D_{i,j})_{i,j\in
Z}$ and the assumption that the filtration is commuting, we have%

\[
\frac{1}{nm}E_{0,0}\left(  S_{n,m}-R_{n,m}-M_{n,m}\right)  ^{2}=\frac{1}%
{nm}\sum_{i=1}^{n}\sum_{j=1}^{m}E_{0,0}\biggl(\mathcal{P}_{i,j}({\sum
\limits_{u=i}^{n}}\sum\limits_{v=j}^{m}X_{u,v})-D_{i,j}\biggr)^{2}.
\]
From Theorem 1 in Peligrad and Voln\'{y} (2018), we know that the quenched CLT
holds for $M_{n,m}/\sqrt{nm}$. Therefore by Theorem 25.4 in Billingsley
(1995), in order to prove the conclusion of this theorem, it is enough to show
that
\begin{equation}
\lim_{n\wedge m\rightarrow\infty}\frac{1}{nm}E_{0,0}\left(  S_{n,m}%
-R_{n,m}-M_{n,m}\right)  ^{2}=0\ \ \text{a.s.} \label{negli}%
\end{equation}
Define the operators
\[
Q_{1}(f)=E_{0,\infty}(\hat{T}f);\ Q_{2}(f)=E_{\infty,0}(\hat{S}f)
\]
Note that $Q_{1}$ and $Q_{2}$ are commuting Dunford-Schwartz operators and we
can write
\[
E_{0,0}\left(  \mathcal{P}_{i,j}(X_{u,v})\right)  ^{2}=Q_{1}^{i}Q_{2}%
^{j}(\mathcal{P}_{0,0}(X_{u-i,v-j}))^{2}.
\]
By simple algebra we obtain
\begin{align*}
&  E_{0,0}\biggl(\mathcal{P}_{i,j}({\sum\limits_{u=i}^{n}}\sum\limits_{v=j}%
^{m}X_{u,v})-D_{i,j}\biggr)^{2}\\
&  =E_{0,0}\biggl({\sum\limits_{u=n+1}^{\infty}}\sum\limits_{v=j}%
^{m}\mathcal{P}_{i,j}(X_{u,v})+{\sum\limits_{u=i}^{\infty}}\sum\limits_{v=m+1}%
^{\infty}\mathcal{P}_{i,j}(X_{u,v})\biggr )^{2}.
\end{align*}
Therefore, by elementary inequalities we have the following bound
\begin{align*}
\frac{1}{nm}E_{0,0}\left(  S_{n,m}-R_{n,m}-M_{n,m}\right)  ^{2}  &  =\frac
{1}{nm}\sum_{i=1}^{n}\sum_{j=1}^{m}E_{0,0}\biggl(\mathcal{P}_{i,j}%
({\sum\limits_{u=i}^{n}}\sum\limits_{v=j}^{m}X_{u,v})-D_{i,j}\biggr)^{2}\\
&  \leq2(I_{n,m}+II_{n,m}),
\end{align*}
where we have used the notations
\[
I_{n,m}=\frac{1}{nm}\sum_{i=1}^{n}\sum_{j=1}^{m}Q_{1}^{i}Q_{2}^{j}%
\biggl(\sum_{u=n+1-i}^{\infty}\sum_{v=0}^{\infty}|\mathcal{P}_{0,0}%
(X_{u,v})|\biggr)^{2}%
\]
and
\[
II_{n,m}=\frac{1}{nm}\sum_{i=1}^{n}\sum_{j=1}^{m}Q_{1}^{i}Q_{2}^{j}%
\biggl(\sum_{u=0}^{\infty}\sum_{v=m+1-j}^{\infty}|\mathcal{P}_{0,0}%
(X_{u,v})|\biggr)^{2}.
\]
The task is now to show the almost sure negligibility of each term. By
symmetry we treat only one of them. \newline Let $c$ be a fixed integer
satisfying $c<n$. We decompose $I_{n,m}$ into two parts%

\begin{equation}
\frac{1}{nm}\sum_{i=1}^{n-c}\sum_{j=1}^{m}Q_{1}^{i}Q_{2}^{j}\biggl(\sum
_{u=n+1-i}^{\infty}\sum_{v=0}^{\infty}|\mathcal{P}_{0,0}(X_{u,v}%
)|\biggr)^{2}:=A_{n,m}(c) \label{A}%
\end{equation}
and
\begin{equation}
\frac{1}{nm}\sum_{i=n-c+1}^{n}\sum_{j=1}^{m}Q_{1}^{i}Q_{2}^{j}\biggl(\sum
_{u=n+1-i}^{\infty}\sum_{v=0}^{\infty}|\mathcal{P}_{0,0}(X_{u,v}%
)|\biggr)^{2}:=B_{n,m}(c). \label{B}%
\end{equation}

Note that%
\begin{align*}
B_{n,m}(c)  &  \leq\frac{1}{nm}\sum_{i=n-c+1}^{n}\sum_{j=1}^{m}Q_{1}^{i}%
Q_{2}^{j}f_{\mathbf{0}}^{2}\\
&  =\frac{1}{nm}\sum_{i=1}^{n}\sum_{j=1}^{m}Q_{1}^{i}Q_{2}^{j}f_{\mathbf{0}%
}^{2}-\frac{1}{nm}\sum_{i=1}^{n-c}\sum_{j=1}^{m}Q_{1}^{i}Q_{2}^{j}%
f_{\mathbf{0}}^{2},
\end{align*}
where $f_{\mathbf{0}}$ is given by (\ref{cvg of proj}).

Since $Q_{1}$ and $Q_{2}$ are commuting Dunford-Schwartz operators, and by
(\ref{momf0}) we have that $E(f_{\mathbf{0}}^{2}\log(1+|f_{\mathbf{0}%
}|))<\infty,$ by the ergodic theorem (Krengel (1985), Theorem 1.1, Ch. 6), for
each $c$ fixed,
\begin{equation}
\lim_{n\wedge m\rightarrow\infty}\frac{1}{nm}\sum_{i=1}^{n-c}\sum_{j=1}%
^{m}Q_{1}^{i}Q_{2}^{j}f_{\mathbf{0}}^{2}=g\text{ a.s. } \label{R1}%
\end{equation}
where
\[
g=\lim_{n\rightarrow\infty}\frac{1}{n}\sum_{i=1}^{n-c}Q_{1}^{i}\left(
\lim_{m\rightarrow\infty}\frac{1}{m}\sum_{j=1}^{m}Q_{2}^{j}(f_{\mathbf{0}%
})\right)  .
\]
Since we assume that either $S$ or $T$ is ergodic, without loss of generality,
here we assume that $S$ is ergodic. By applying Lemma 7.1 in Dedecker et al.
(2014), we obtain that%
\[
\lim_{m\rightarrow\infty}\frac{1}{m}\sum_{j=1}^{m}Q_{2}^{j}(f_{\mathbf{0}%
})=E\left(  f_{\mathbf{0}}^{2}\right)  \text{ a.s. },
\]

which implies that $g$ in (\ref{R1}) is a constant almost surely and
$g=E\left(  f_{\mathbf{0}}^{2}\right)  $.

Therefore, for all $c>0$%
\[
\lim_{n\wedge m\rightarrow\infty}B_{n,m}(c)=0\text{ a.s. }%
\]
In order to treat the first term in the decomposition of $I_{n,m}$, note that
\[
A_{n,m}(c)\leq\frac{1}{nm}\sum_{i=1}^{n-c}\sum_{j=1}^{m}Q_{1}^{i}Q_{2}%
^{j}f_{\mathbf{0}}^{2}(c)\ \text{ where }f_{\mathbf{0}}(c)=\sum_{u=c}^{\infty
}\sum_{v=0}^{\infty}|\mathcal{P}_{0,0}(X_{u,v})|.
\]
Again, by the ergodic theorem for Dunford-Schwartz operators (Krengel (1985),
Theorem 1.1, Ch. 6) and Lemma 7.1 in Dedecker et al. (2014), for each $c$ fixed%

\begin{equation}
\label{R2}\lim_{n\wedge m\rightarrow\infty}\frac{1}{nm}\sum_{i=1}^{n-c}%
\sum_{j=1}^{m}Q_{1}^{i}Q_{2}^{j}f_{\mathbf{0}}^{2}(c)=E\left(  f_{\mathbf{0}%
}^{2}(c)\right)  \text{ a.s. }%
\end{equation}
In addition, by (\ref{cvg of proj}), we know that $\lim_{c\rightarrow\infty
}|f_{\mathbf{0}}(c)|=0.$ So, by the dominated convergence theorem, we have
\[
\lim_{c\rightarrow\infty}\lim_{n\wedge m\rightarrow\infty} A_{n,m}(c)\leq
\lim_{c\rightarrow\infty}E(f_{\mathbf{0}}^{2}(c))=0\text{ a.s.}%
\]

\ The proof of the theorem is now complete. \ 
\end{proof}

\bigskip

The proof of Theorem \ref{Thm1nn} requires only a slight modification of the
proof of Theorem \ref{Thm2nm}. Indeed, instead of Theorem 1.1 in Ch. 6 in
Krengel (1985), we shall use Theorem 2.8 in Ch. 6 in the same book.\newline


\begin{proof}
[Proof of Corollary \ref{Cor}]By Theorem \ref{Thm2nm} together with Theorem
25.4 in Billingsley (1995), it suffices to show that (\ref{regularity})
implies that%

\begin{equation}
\lim_{n\wedge m\rightarrow\infty}\frac{1}{nm}E_{0,0}(R_{n,m}^{2})=0\ \text{
a.s. } \label{CR}%
\end{equation}
Simple computations, involving the fact that the filtration is commuting,
gives that
\begin{equation}
E_{0,0}(R_{n,m}^{2})=E_{0,0}\left(  E_{n,0}^{2}(S_{n,m})\right)
+E_{0,0}\left(  E_{0,m}^{2}(S_{n,m})\right)  -E_{0,0}^{2}(S_{n,m})
\label{estimate}%
\end{equation}
and since $E_{0,0}^{2}(S_{n,m})\leq E_{0,0}\left(  E_{0,m}^{2}(S_{n,m}%
)\right)  $ a.s.$,$ we have
\[
\text{ }\lim_{n\wedge m\rightarrow\infty}\frac{1}{nm}E_{0,0}(R_{n,m}%
^{2})=0\ \text{ a.s. }\ \text{by condition (\ref{regularity})}.
\]

\end{proof}

\bigskip

We give next the proof of Theorem \ref{Cor mn}. Before proving this theorems,
we shall first establish several preliminary facts presented as three lemmas.

\begin{lemma}
\label{fact1}Let $q\geq2.$ Condition (\ref{higher moment q}) implies%
\begin{equation}
\sum_{u\geq1}\frac{1}{u^{1/q}}\sum_{v\geq0}\Vert{P_{0,\tilde{0}}(X_{u,v}%
)}\Vert_{q}<\infty. \label{nm fact}%
\end{equation}

\end{lemma}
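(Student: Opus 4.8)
The plan is to reduce this two–dimensional statement to a one–dimensional (horizontal) martingale inequality, applied separately for each fixed vertical level $u$, and then sum the resulting bounds against the weight $u^{-1/q}$. First I would use the commuting property (\ref{pcf}) to rewrite the projection purely horizontally: since $E_{0,0}E_{1,1}=E_{0,0}$ and $E_{0,-1}E_{1,1}=E_{0,-1}$, we have $P_{0,\tilde{0}}(X_{u,v})=(E_{0,0}-E_{0,-1})(E_{1,1}(X_{u,v}))$. More usefully, writing $X_{u,v}=X_{0,0}\circ T^{u}S^{v}$ and transporting the conditional expectations through the measure–preserving shifts, one gets that $\Vert P_{0,\tilde{0}}(X_{u,v})\Vert_{q}$ equals the $L^{q}$ norm of the horizontal martingale difference $d_{v}^{(u)}:=(E_{-u,-v}-E_{-u,-v-1})X_{0,0}$. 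For fixed $u$, $(d_{v}^{(u)})_{v\geq0}$ are martingale differences for the (reversed) horizontal filtration, with tails $T_{v}^{(u)}:=\sum_{w\geq v}d_{w}^{(u)}=E_{-u,-v}X_{0,0}$ satisfying $\Vert T_{v}^{(u)}\Vert_{q}=\Vert E_{0,0}(X_{u,v})\Vert_{q}\leq\Vert E_{1,1}(X_{u,v})\Vert_{q}$ (again by (\ref{pcf}) and the fact that conditional expectation is an $L^{q}$ contraction). Since $T_{v+1}^{(u)}=E_{-u,-v-1}(T_{v}^{(u)})$, the map $v\mapsto\Vert T_{v}^{(u)}\Vert_{q}$ is non-increasing.

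The core is the one–dimensional inequality: for martingale differences $(d_{v})$ with non-increasing tail norms and $q\geq2$, I would show $\sum_{v\geq1}\Vert d_{v}\Vert_{q}\leq C_{q}\sum_{v\geq1}v^{-1/q}\Vert T_{v}\Vert_{q}$. I would prove this by dyadic blocking. On $[2^{k},2^{k+1})$ the block sum telescopes, $\sum_{v=2^{k}}^{2^{k+1}-1}d_{v}=T_{2^{k}}-T_{2^{k+1}}$, of norm at most $2\Vert T_{2^{k}}\Vert_{q}$. The key estimate is the lower Burkholder bound (the martingale inequality recorded in the Appendix): for $q\geq2$, $\big(\sum_{v}\Vert d_{v}\Vert_{q}^{q}\big)^{1/q}\leq c_{q}^{-1}\Vert\sum_{v}d_{v}\Vert_{q}$. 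This follows from $\Vert\sum_{v}d_{v}\Vert_{q}\geq c_{q}\Vert(\sum_{v}d_{v}^{2})^{1/2}\Vert_{q}$ together with the elementary superadditivity $(\sum a_{v})^{q/2}\geq\sum a_{v}^{q/2}$ for $a_{v}\geq0$, applied to $a_{v}=d_{v}^{2}$, which yields $E(\sum_{v}d_{v}^{2})^{q/2}\geq\sum_{v}E|d_{v}|^{q}$. Applying this on each block, then Hölder with block length $2^{k}$, gives $\sum_{v=2^{k}}^{2^{k+1}-1}\Vert d_{v}\Vert_{q}\lesssim(2^{k})^{1-1/q}\Vert T_{2^{k}}\Vert_{q}$; summing over $k$ and invoking Cauchy condensation (valid because $\Vert T_{v}\Vert_{q}$ is non-increasing) produces the claimed weighted bound, the Hölder exponent $1-1/q$ being exactly compensated by the weight $v^{-1/q}$.

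Finally I would assemble the pieces. For each $u$ the one–dimensional inequality gives $\sum_{v\geq0}\Vert P_{0,\tilde{0}}(X_{u,v})\Vert_{q}\leq\Vert d_{0}^{(u)}\Vert_{q}+C_{q}\sum_{v\geq1}v^{-1/q}\Vert E_{1,1}(X_{u,v})\Vert_{q}$; multiplying by $u^{-1/q}$ and summing over $u$, the main double sum is precisely (\ref{higher moment q}) and hence finite. The boundary term is controlled by $\Vert d_{0}^{(u)}\Vert_{q}\leq\Vert T_{0}^{(u)}\Vert_{q}+\Vert T_{1}^{(u)}\Vert_{q}\leq2\Vert E_{1,1}(X_{u,1})\Vert_{q}$, using $E_{0,0}X_{u,0}=E_{0,0}E_{1,0}X_{u,0}$, the identity $\Vert E_{1,0}X_{u,0}\Vert_{q}=\Vert E_{1,1}X_{u,1}\Vert_{q}$ from horizontal stationarity, and the contraction property; thus $\sum_{u\geq1}u^{-1/q}\Vert d_{0}^{(u)}\Vert_{q}$ is the $v=1$ slice of (\ref{higher moment q}), which is again finite.

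I expect the main obstacle to be the sharp one–dimensional weighted inequality, specifically obtaining the precise weight $v^{-1/q}$: this forces the use of the genuinely $L^{q}$ (rather than $L^{2}$) lower Burkholder bound, so that passing from the $L^{q}$ norm of a dyadic block of differences to the sum of individual $L^{q}$ norms costs only the factor $(\text{length})^{1-1/q}$. The rest is essentially bookkeeping: tracking the commuting identities and stationarity shifts that convert the horizontal projections $P_{0,\tilde{0}}$ into the full conditional expectations $E_{1,1}$ appearing in (\ref{higher moment q}) and aligning the index ranges at the boundary $v=0$.
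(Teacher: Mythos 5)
Your proposal is correct and takes essentially the same route as the paper's proof: after shifting by stationarity to $(E_{-u,-v}-E_{-u,-v-1})(X_{0,0})$, both arguments proceed by dyadic blocking, H\"older's inequality with exponent $(q-1)/q$ on each block, the lower ($\ell^q$) half of the Rosenthal--Burkholder inequality for the martingale-difference blocks, telescoping of each block to $E_{-u,-2^k}(X_{0,0})$, and Cauchy condensation via monotonicity of the tail norms, before summing against the weight $u^{-1/q}$. The differences are only presentational: you package the core estimate as a one-dimensional weighted lemma applied for each fixed $u$, derive the lower bound from Burkholder's square-function inequality plus superadditivity of $x\mapsto x^{q/2}$ rather than quoting the lower half of the Rosenthal inequality in the Appendix directly, and you handle the $v=0$ boundary term explicitly, which the paper absorbs into its appeal to stationarity.
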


\begin{proof}
Throughout the proof, denote by $C_{q}>0$ a generic constant depending on $q$
which may take different values from line to line. By the H\"{o}lder
inequality and the Rosenthal inequality for martingales (see Theorem
\ref{Rosenthal ineq} in the Appendix), we have%

\begin{gather*}
\sum_{v\geq1}\lVert P_{0,\tilde{0}}(X_{u,v})\rVert_{q}=\sum_{v\geq1}\lVert
P_{-u,-\tilde{v}}(X_{0,0})\rVert_{q}\leq\sum_{n\geq0}(2^{n})^{\frac{q-1}{q}%
}\left(  \sum_{v=2^{n}}^{2^{n+1}-1}\lVert P_{-u,-\tilde{v}}(X_{0,0})\rVert
_{q}^{q}\right)  ^{\frac{1}{q}}\\
\leq C_{q}\sum_{n\geq0}(2^{n})^{\frac{q-1}{q}}\lVert\sum_{v=2^{n}}^{2^{n+1}%
-1}P_{-u,-\tilde{v}}(X_{0,0})\rVert_{q}\leq2C_{q}\sum_{n\geq0}(2^{n}%
)^{\frac{q-1}{q}}\lVert E_{-u,-2^{n}}(X_{0,0})\rVert_{q}.\text{ }%
\end{gather*}

Since the sequence $(\Vert{E_{-u,-n}(X_{0,0})}\Vert_{q})_{n\geq1}$ is
non-increasing in $n$, it follows that
\[
(2^{n})^{\frac{q-1}{q}}\Vert{E_{-u,-{2^{n}}}(X_{0,0})}\Vert_{q}\leq
2\sum_{k=2^{n-1}}^{2^{n}-1}\frac{\Vert{E_{-u,-k}(X_{0,0})}\Vert_{q}}{k^{1/q}%
}.
\]
So%
\begin{equation}
\sum_{v=1}^{\infty}\Vert{P_{0,\tilde{0}}(X_{u,v})}\Vert_{q}\leq C_{q}%
\sum_{k\geq1}\frac{\Vert{E_{-u,-k}(X_{0,0})}\Vert_{q}}{k^{1/q}}.
\label{nm fact0}%
\end{equation}

Thus relation (\ref{nm fact}) holds by (\ref{higher moment q}),
(\ref{nm fact0}) and stationarity.

In addition, for any $u\geq0,$ we also have%
\begin{equation}
\sum_{v=1}^{\infty}\lVert P_{0,\tilde{0}}(X_{u,v})\rVert_{q}<\infty.
\label{nm other fact1}%
\end{equation}
By the symmetric roles of $m$ and $n$, for any $v\geq0,$ we have
\begin{equation}
\sum_{u=1}^{\infty}\lVert P_{\tilde{0},0}(X_{u,v})\rVert_{q}<\infty.
\label{nm other fact2}%
\end{equation}

\end{proof}

\begin{lemma}
\label{fact2}Condition (\ref{higher moment 2}) implies
\begin{equation}
\lim_{n\wedge m\rightarrow\infty}\frac{1}{nm}E_{0,0}(R_{n,m}^{2})=0\text{ a.s.
} \label{negliR}%
\end{equation}

\end{lemma}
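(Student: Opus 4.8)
The plan is to verify the two regularity conditions in (\ref{regularity}), i.e. to establish (\ref{negliR}) by showing $\frac{1}{nm}E_{0,0}(R_{n,m}^{2})\to 0$ a.s. Relation (\ref{estimate}), already obtained in the proof of Corollary \ref{Cor}, gives $E_{0,0}(R_{n,m}^{2})\le E_{0,0}(E_{n,0}^{2}(S_{n,m}))+E_{0,0}(E_{0,m}^{2}(S_{n,m}))$, and since the roles of rows and columns are symmetric in (\ref{higher moment 2}) (and Lemma \ref{fact1} supplies the bound in both directions), it is enough to treat one summand, say $\frac{1}{nm}E_{0,0}(E_{0,m}^{2}(S_{n,m}))$. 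I would first isolate the fully conditioned part: writing $E_{0,m}(S_{n,m})=E_{0,0}(S_{n,m})+\sum_{l=1}^{m}\Delta_{l}$ with $\Delta_{l}:=(E_{0,l}-E_{0,l-1})(S_{n,m})$, the $\Delta_{l}$ are horizontal martingale differences, mutually orthogonal and orthogonal to $\mathcal{F}_{0,0}$ after conditioning, so that $E_{0,0}(E_{0,m}^{2}(S_{n,m}))=E_{0,0}^{2}(S_{n,m})+\sum_{l=1}^{m}E_{0,0}(\Delta_{l}^{2})$. It then suffices to show that each of these two terms, divided by $nm$, tends to $0$ a.s.

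For the martingale part, commutation of the filtration and stationarity let me write $\Delta_{l}=\hat{S}^{l}(\Phi_{n,m-l})$, where $\Phi_{n,r}=\sum_{u=1}^{n}\sum_{w=0}^{r}P_{0,\tilde{0}}(X_{u,w})$ is $\mathcal{F}_{0,0}$-measurable and $|\Phi_{n,r}|\le \Phi_{n}^{*}:=\sum_{u=1}^{n}\sum_{w\ge 0}|P_{0,\tilde{0}}(X_{u,w})|$. Introducing the positive Dunford--Schwartz operator $\widetilde{Q}f:=E_{0,0}(\hat{S}f)$, one verifies by induction that $E_{0,0}(\hat{S}^{l}Y)=\widetilde{Q}^{l}Y$ for $\mathcal{F}_{0,0}$-measurable $Y$, whence $E_{0,0}(\Delta_{l}^{2})=\widetilde{Q}^{l}(\Phi_{n,m-l}^{2})\le \widetilde{Q}^{l}((\Phi_{n}^{*})^{2})$ and therefore $\frac{1}{nm}\sum_{l=1}^{m}E_{0,0}(\Delta_{l}^{2})\le \frac{1}{n}\,\mathcal{M}((\Phi_{n}^{*})^{2})$, where $\mathcal{M}$ is the maximal operator of the ergodic averages of $\widetilde{Q}$. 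By Lemma \ref{fact1} with $q=2$ the numbers $a_{u}:=\sum_{w\ge 0}\lVert P_{0,\tilde{0}}(X_{u,w})\rVert_{2}$ satisfy $\sum_{u\ge 1}a_{u}/\sqrt{u}<\infty$, so $\lVert \Phi_{n}^{*}\rVert_{2}^{2}\le(\sum_{u\le n}a_{u})^{2}=o(n)$ by Kronecker's lemma. To upgrade this to the a.s. joint limit I would invoke the weak $(1,1)$ maximal ergodic inequality along the dyadic subsequence $n_{k}=2^{k}$: a Cauchy--Schwarz computation gives $\sum_{k}(\sum_{u\le 2^{k}}a_{u})^{2}2^{-k}\le C(\sum_{u}a_{u}/\sqrt{u})^{2}<\infty$, so Borel--Cantelli yields $\frac{1}{n_{k}}\mathcal{M}((\Phi_{n_{k}}^{*})^{2})\to 0$ a.s., and the monotonicity of $\Phi_{n}^{*}$ in $n$ fills in the remaining indices.

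The fully conditioned term $E_{0,0}^{2}(S_{n,m})$ calls for a parallel but genuinely two-dimensional version of the same scheme. I would bound $E_{0,0}^{2}(S_{n,m})\le \Psi_{n,m}^{2}$ with $\Psi_{n,m}=\sum_{u=1}^{n}\sum_{v=1}^{m}|E_{0,0}(X_{u,v})|$, and use that stationarity gives $\lVert E_{0,0}(X_{u,v})\rVert_{2}=\lVert E_{1,1}(X_{u+1,v+1})\rVert_{2}$, which turns (\ref{higher moment 2}) into $\sum_{u,v\ge 1}\lVert E_{0,0}(X_{u,v})\rVert_{2}/\sqrt{uv}<\infty$. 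A Markov estimate along a two-dimensional dyadic grid $(2^{j},2^{k})$, Borel--Cantelli (the relevant series sums after swapping the order of summation, exactly as above), and the monotonicity of $\Psi_{n,m}$ in each index then give $\Psi_{n,m}/\sqrt{nm}\to 0$ a.s., i.e. $\frac{1}{nm}E_{0,0}^{2}(S_{n,m})\to 0$ a.s. Assembling the two parts proves (\ref{negliR}).

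I expect the main difficulty to be precisely this passage from the easy expectation-level decay (which is essentially a double Kronecker lemma) to the a.s. behaviour under the joint limit $n\wedge m\to\infty$. In contrast with the proof of Theorem \ref{Thm2nm}, where the ergodically averaged integrand $f_{\mathbf{0}}^{2}$ is fixed, here the integrands $(\Phi_{n}^{*})^{2}$ and $\Psi_{n,m}$ grow with the summation range, so the pointwise ergodic theorem cannot be applied with a single function. The maximal ergodic inequality combined with the monotone dyadic-subsequence/Borel--Cantelli device is what converts the weighted square-summability provided by Lemma \ref{fact1} and (\ref{higher moment 2}) into the desired quenched negligibility.
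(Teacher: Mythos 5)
Your proposal is correct, and it shares the paper's skeleton: the reduction via (\ref{estimate}) and symmetry, the orthogonal decomposition of $E_{0,m}(S_{n,m})-E_{0,0}(S_{n,m})$ into the differences $P_{0,\tilde{l}}(S_{n,m})$, the shift identity $\Delta_{l}=\hat{S}^{l}\Phi_{n,m-l}$, the operator $\widetilde{Q}$ (the paper's $Q_{0}$), and Lemma \ref{fact1} with $q=2$. Where you genuinely diverge is in the passage to almost sure convergence. The paper truncates the sum over $u$ at a fixed level $c$: for $u\leq c$ the integrand becomes the fixed square-integrable function $g^{2}(u)$, whose ergodic averages converge a.s. by the pointwise ergodic theorem for Dunford--Schwartz operators and are then killed by the prefactor $c^{2}/n$; for $u>c$ the weights $1/\sqrt{u}$ are pulled out so that the integrand becomes the fixed function $h^{2}(c)$, whose ergodic averages converge a.s. to $E(h^{2}(c))$ (identified as a constant via Lemma 7.1 of Dedecker et al.), and $E(h^{2}(c))\rightarrow 0$ as $c\rightarrow\infty$; the analogous split at $c$ also disposes of $E_{0,0}^{2}(S_{n,m})/nm$ with no maximal inequality at all. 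You instead keep the full $n$-dependent integrand $(\Phi_{n}^{\ast})^{2}$ and trade the pointwise ergodic theorem for the Hopf weak-$(1,1)$ maximal ergodic inequality, converting the weighted summability of Lemma \ref{fact1} (via Kronecker's lemma and a correct Cauchy--Schwarz/Abel estimate) into summable tail probabilities along dyadic indices, finishing with Borel--Cantelli and monotonicity; the same Markov/Borel--Cantelli device on a two-dimensional dyadic grid handles $E_{0,0}^{2}(S_{n,m})$. Both routes are valid. The paper's argument is softer and reuses exactly the machinery already set up for Theorem \ref{Thm2nm}, at the price of an iterated limit ($n\wedge m\rightarrow\infty$ then $c\rightarrow\infty$) and an appeal to ergodicity to identify ergodic limits as constants. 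Your argument — essentially the Cuny--Voln\'{y} maximal-operator device that the paper itself credits in Section 5 — is more quantitative: the bound $\frac{1}{n}\mathcal{M}((\Phi_{n}^{\ast})^{2})$ is uniform in $m$, no truncation parameter or iterated limit is needed, and no ergodicity of $S$ or $T$ is used anywhere in the lemma. One small correction to your closing remark: it is not true that ``the pointwise ergodic theorem cannot be applied'' because the integrands grow with $n$; the paper's $c$-truncation is precisely the trick that freezes the integrand so that it can be applied.
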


\begin{proof}
First we show that (\ref{higher moment 2}) implies that%

\[
\frac{E_{0,0}^{2}(S_{n,m})}{nm}\rightarrow0\text{ a.s. when }n\wedge
m\rightarrow\infty.
\]

We bound this term in the following way%

\begin{gather*}
\frac{|E_{0,0}(S_{n,m})|}{\sqrt{nm}}\leq\frac{1}{\sqrt{nm}}\sum_{u=1}^{n}%
\sum_{v=1}^{m}|E_{0,0}(X_{u,v})|\\
\leq\frac{1}{\sqrt{n}}\sum_{u=1}^{c}\sum_{v=1}^{\infty}\frac{|E_{0,0}%
(X_{u,v})|}{\sqrt{v}}+\sum_{u=c+1}^{\infty}\sum_{v=1}^{\infty}\frac
{|E_{0,0}(X_{u,v})|}{\sqrt{uv}}\\
\leq\frac{c}{\sqrt{n}}\sup_{1\leq u\leq c}\sum_{v=1}^{\infty}\frac
{|E_{0,0}(X_{u,v})|}{\sqrt{v}}+\sum_{u=c+1}^{\infty}\sum_{v=1}^{\infty}%
\frac{|E_{0,0}(X_{u,v})|}{\sqrt{uv}}.
\end{gather*}
Now, (\ref{higher moment 2}) implies that
\[
\sum_{u=1}^{\infty}\sum_{v=1}^{\infty}\frac{|E_{0,0}(X_{u,v})|}{\sqrt{uv}%
}<\infty\text{ a.s.}%
\]
Therefore,
\begin{equation}
\frac{|E_{0,0}(S_{n,m})|}{\sqrt{nm}}\rightarrow0\text{ a.s.} \label{negl1}%
\end{equation}
by letting $n\rightarrow\infty$ followed by $c\rightarrow\infty.$

By (\ref{estimate}) and the symmetric roles of $m$ and $n$, the theorem will
follow if we can show that
\[
E_{0,0}\frac{(E_{0,m}^{2}(S_{n,m}))}{nm}\rightarrow0\text{ a.s. when }n\wedge
m\rightarrow\infty.
\]
By (\ref{negl1})\ this is equivalent to showing that%
\[
\frac{1}{nm}E_{0,0}\left(  E_{0,m}(S_{n,m})-E_{0,0}(S_{n,m})\right)
^{2}\rightarrow0\text{ a.s. when }n\wedge m\rightarrow\infty.
\]
We start from the representation%
\begin{align*}
E_{0,0}\left(  E_{0,m}(S_{n,m})-E_{0,0}(S_{n,m})\right)  ^{2}  &  =\sum
_{j=1}^{m}E_{0,0}\biggl[P_{0,\tilde{j}}\biggl(\sum_{u=1}^{n}\sum_{v=j}%
^{m}X_{u,v}\biggr)\biggr]^{2}\\
&  =\sum_{j=1}^{m}E_{0,0}\biggl[\widehat{S}^{j}\biggl(P_{0,\tilde{0}}%
(\sum_{u=1}^{n}\sum_{v=0}^{m-j}X_{u,v})\biggr)^{2}\biggr].
\end{align*}
So,
\begin{gather*}
\frac{1}{nm}E_{0,0}\left(  E_{0,m}(S_{n,m})-E_{0,0}(S_{n,m})\right)
^{2}=\frac{1}{mn}\sum_{j=1}^{m}E_{0,0}\biggl[\widehat{S}^{j}\biggl(\sum
_{u=1}^{n}\sum_{v=0}^{m-j}P_{0,\tilde{0}}(X_{u,v})\biggr)^{2}\biggr]\\
\leq\frac{2}{mn}\sum_{j=1}^{m}E_{0,0}\biggl[\widehat{S}^{j}\biggl(\sum
_{u=1}^{c}\sum_{v=0}^{m-j}|P_{0,\tilde{0}}(X_{u,v})|\biggr)^{2}\biggr]\\
+\frac{2}{m}\sum_{j=1}^{m}E_{0,0}\biggl[\widehat{S}^{j}\biggl(\sum_{u=c+1}%
^{n}\frac{1}{\sqrt{u}}\sum_{v=0}^{m-j}|P_{0,\tilde{0}}(X_{u,v})|\biggr)^{2}%
\biggr]\\
=I_{n,m,c}+II_{n,m,c}.
\end{gather*}
Let us introduce the operator%
\[
Q_{0}(f)=E_{0,0}(\widehat{S}f).
\]
We treat first the term $I_{n,m,c}.\ $For $c$ fixed%
\begin{align*}
I_{n,m,c}  &  \leq\frac{2c^{2}}{mn}\sup_{1\leq u\leq c}\sum_{j=1}^{m}%
E_{0,0}\biggl[\widehat{S}^{j}\biggl(\sum_{v=0}^{\infty}|P_{0,\tilde{0}%
}(X_{u,v})|\biggr)^{2}\biggr]\\
&  =\frac{2c^{2}}{mn}\sup_{1\leq u\leq c}\sum_{j=1}^{m}Q_{0}^{j}%
\biggl[\biggl(\sum_{v=0}^{\infty}|P_{0,\tilde{0}}(X_{u,v})|\biggr)^{2}\biggr].
\end{align*}
By (\ref{nm other fact1}), the function
\[
g(u)=\sum_{v=0}^{\infty}|P_{0,\tilde{0}}(X_{u,v})|
\]
is square integrable. By the ergodic theorem for Dunford-Schwartz operators
(see Theorem 11.4 in Eisner et al., 2015 or Corollary 3.8 in Ch. 3, Krengel,
1985) and Lemma 7.1 in Dedecker et al. (2014),
\[
\frac{1}{m}\sum_{j=1}^{m}Q_{0}^{j}\left[  g^{2}(u)\right]  \rightarrow
E(g^{2}(u))\text{ a.s.}%
\]
and therefore, since $c$ is fixed,
\[
\lim_{n\wedge m\rightarrow\infty}I_{n,m,c}=0\text{ a.s.}%
\]
In order to treat the second term, note that
\[
II_{n,m,c}\leq\frac{2}{m}\sum_{j=1}^{m}Q_{0}^{j}\biggl[\biggl(\sum
_{u=c}^{\infty}\frac{1}{\sqrt{u}}\sum_{v=0}^{\infty}|P_{0,\tilde{0}}%
(X_{u,v})|\biggr)^{2}\biggr].
\]
Denote%
\[
h(c)=\sum_{u=c}^{\infty}\frac{1}{\sqrt{u}}\sum_{v=0}^{\infty}|P_{0,\tilde{0}%
}(X_{u,v})|.
\]
By (\ref{nm fact}), we know that%
\begin{equation}
\sum_{u=1}^{\infty}\frac{1}{\sqrt{u}}\sum_{v=0}^{\infty}\lVert P_{0,\tilde{0}%
}(X_{u,v})\rVert_{2}<\infty. \label{nn fact}%
\end{equation}
So, $E(h^{2}(c))<\infty.$ Again, by the ergodic theorem for the
Dunford-Schwartz operators (see Theorem 11.4 in Eisner et al., 2015 or
Corollary 3.8 in Ch. 3, Krengel, 1985), we obtain
\[
\frac{1}{m}\sum_{j=1}^{m}Q_{0}^{j}(h^{2}(c))\rightarrow E\left(
h^{2}(c)\right)  \leq\left(  \sum_{u=c}^{\infty}\frac{1}{\sqrt{u}}\sum
_{v=0}^{\infty}\lVert P_{0,\tilde{1}}(X_{u,v})\rVert_{2}\right)  ^{2}.
\]
So, by (\ref{nn fact})
\[
\lim_{c\rightarrow\infty}\lim_{m\rightarrow\infty}\frac{1}{m}\sum_{j=1}%
^{m}Q_{0}^{j}(h^{2}(c))=0\text{ a.s.}%
\]

\end{proof}

\begin{lemma}
\label{fact3} Let $q\geq2.$ Condition (\ref{higher moment q}) implies%
\begin{equation}
\sum_{u,v\geq0}\lVert\mathcal{P}_{0,0}(X_{u,v})\rVert_{q}<\infty,
\label{nm fact 0}%
\end{equation}
which clearly implies (\ref{main condi}).
\end{lemma}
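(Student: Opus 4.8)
The plan is to run a two-dimensional dyadic chaining argument, applying in both coordinates simultaneously the same combination of H\"older's inequality and the Rosenthal inequality for martingale differences that was used in the proof of Lemma \ref{fact1}. First I would record that $\mathcal{P}_{0,0}=P_{\tilde 0,0}\circ P_{0,\tilde 0}$, so that by stationarity $\lVert\mathcal{P}_{0,0}(X_{u,v})\rVert_q=\lVert\mathcal{P}_{-u,-v}(X_{0,0})\rVert_q$, and I would peel off the boundary lines $u=0$ and $v=0$: since $P_{\tilde 0,0}$ and $P_{0,\tilde 0}$ are $L^q$-bounded operators (norm at most $2$), these lines are controlled by $\sum_{u}\lVert P_{\tilde 0,0}(X_{u,0})\rVert_q$ and $\sum_{v}\lVert P_{0,\tilde 0}(X_{0,v})\rVert_q$, which are finite by (\ref{nm other fact1})--(\ref{nm other fact2}) of Lemma \ref{fact1}. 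It then remains to bound the bulk sum $\sum_{u,v\ge 1}\lVert\mathcal{P}_{-u,-v}(X_{0,0})\rVert_q$.

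For the bulk I would group the indices into dyadic rectangles $u\in[2^{n},2^{n+1})$, $v\in[2^{p},2^{p+1})$ with $n,p\ge 0$, and on each rectangle apply H\"older separately in $u$ and in $v$ to produce the factor $(2^{n})^{(q-1)/q}(2^{p})^{(q-1)/q}$ in front of $\bigl(\sum_{u,v}\lVert\mathcal{P}_{-u,-v}(X_{0,0})\rVert_q^{q}\bigr)^{1/q}$. Because the family $\bigl(\mathcal{P}_{-u,-v}(X_{0,0})\bigr)$ consists of ortho-martingale differences, the Rosenthal inequality (Theorem \ref{Rosenthal ineq}) applied successively in each coordinate yields $\bigl(\sum_{u,v}\lVert\mathcal{P}_{-u,-v}(X_{0,0})\rVert_q^{q}\bigr)^{1/q}\le C_q\bigl\lVert\sum_{u,v}\mathcal{P}_{-u,-v}(X_{0,0})\bigr\rVert_q$. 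The double sum telescopes to $\bigl(E_{-2^{n},-2^{p}}-E_{-2^{n+1},-2^{p}}-E_{-2^{n},-2^{p+1}}+E_{-2^{n+1},-2^{p+1}}\bigr)(X_{0,0})$, and since $\lVert E_{-a,-b}(X_{0,0})\rVert_q$ is non-increasing in both $a$ and $b$, each of the four terms is dominated by $\lVert E_{-2^{n},-2^{p}}(X_{0,0})\rVert_q$. Hence the rectangle contributes at most $4C_q(2^{n}2^{p})^{(q-1)/q}\lVert E_{-2^{n},-2^{p}}(X_{0,0})\rVert_q$.

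Finally I would pass from the dyadic sum back to the full weighted sum. Using the monotonicity of $\lVert E_{-u,-v}(X_{0,0})\rVert_q$ in each index (the comparison already exploited in Lemma \ref{fact1}), one has coordinate by coordinate $(2^{n})^{(q-1)/q}\lVert E_{-2^{n},-2^{p}}(X_{0,0})\rVert_q\lesssim\sum_{u=2^{n-1}}^{2^{n}-1}u^{-1/q}\lVert E_{-u,-2^{p}}(X_{0,0})\rVert_q$, and likewise in $p$; iterating gives $\sum_{n,p\ge 0}(2^{n}2^{p})^{(q-1)/q}\lVert E_{-2^{n},-2^{p}}(X_{0,0})\rVert_q\le C_q\sum_{u,v\ge 1}(uv)^{-1/q}\lVert E_{-u,-v}(X_{0,0})\rVert_q$. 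By stationarity the right-hand side equals, up to an index shift and a constant, $\sum_{u,v\ge 1}(uv)^{-1/q}\lVert E_{1,1}(X_{u,v})\rVert_q$, which is finite by hypothesis (\ref{higher moment q}); this establishes (\ref{nm fact 0}). For the last assertion, when $q>2$ the Young function $\phi(x)=x^{2}\log(1+|x|)$ satisfies $\phi(x)/x^{q}\to 0$, so $\lVert\cdot\rVert_\phi\le C_q\lVert\cdot\rVert_q$, and (\ref{nm fact 0}) immediately yields (\ref{main condi}).

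I expect the main obstacle to be the honest execution of the two-dimensional step: justifying the Rosenthal-type lower bound $\bigl(\sum\lVert\mathcal{P}_{-u,-v}(X_{0,0})\rVert_q^{q}\bigr)^{1/q}\le C_q\bigl\lVert\sum\mathcal{P}_{-u,-v}(X_{0,0})\bigr\rVert_q$ for ortho-martingale differences (which forces one to apply the one-dimensional martingale inequality twice, conditioning on one filtration while the other is held fixed), together with the careful bookkeeping that makes the dyadic weights collapse exactly onto the $(uv)^{-1/q}$ weight appearing in (\ref{higher moment q}).
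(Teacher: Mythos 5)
Your proposal is correct and follows essentially the same route as the paper's own proof: dyadic blocking with H\"older's inequality in both coordinates, a double (coordinate-by-coordinate) application of the Rosenthal inequality to obtain $\bigl(\sum\lVert\mathcal{P}_{-u,-v}(X_{0,0})\rVert_q^q\bigr)^{1/q}\le C_q\bigl\lVert\sum\mathcal{P}_{-u,-v}(X_{0,0})\bigr\rVert_q$, then telescoping and the monotonicity of $\lVert E_{-u,-v}(X_{0,0})\rVert_q$ to collapse each dyadic block onto the weight $(uv)^{-1/q}$ of (\ref{higher moment q}). The only cosmetic differences are that you control the boundary lines through (\ref{nm other fact1})--(\ref{nm other fact2}) where the paper simply says the one-dimensional argument applies ``similarly,'' and that you spell out the Orlicz embedding $\lVert\cdot\rVert_{\phi}\le C_q\lVert\cdot\rVert_q$ for $q>2$, which the paper leaves implicit.
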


\begin{proof}
By applying twice the Rosenthal inequality for martingales (see Theorem
\ref{Rosenthal ineq} in the Appendix), for any integers $a\leq b$ and $c\leq
d$, we have%
\begin{equation}
\sum_{k=a}^{b}\sum_{k^{\prime}=c}^{d}\left\Vert \mathcal{P}_{-k,-k^{\prime}%
}(X_{0,0})\right\Vert _{q}^{q}\leq C_{q}\Vert{\sum_{k=a}^{b}\sum_{k^{\prime
}=c}^{d}\mathcal{P}_{-k,-k^{\prime}}(X_{0,0})}\Vert_{q}^{q}.
\label{Rosenthal bound}%
\end{equation}
In addition, note that for any integers $a\leq b$ and $c\leq d$, we have%

\begin{equation}
\Vert{\sum_{k=a}^{b}\sum_{k^{\prime}=c}^{d}\mathcal{P}_{-k,-k^{\prime}%
}(X_{0,0})}\Vert_{q}^{q}\leq4^{q}\Vert{E_{-a,-c}(X_{0,0})}\Vert_{q}^{q}.
\label{Fact}%
\end{equation}
Then by the H\"{o}lder's inequality together with (\ref{Rosenthal bound}) and
(\ref{Fact}), we obtain
\begin{gather*}
\sum_{u,v\geq1}\left\Vert \mathcal{P}_{-u,-v}(X_{0,0})\right\Vert _{q}\leq
\sum_{n,m\geq0}(2^{n}2^{m})^{\frac{q-1}{q}}\left(  \sum_{k=2^{n}}^{2^{n+1}%
-1}\sum_{k^{\prime}=2^{m}}^{2^{m+1}-1}\left\Vert \mathcal{P}_{-k,-k^{\prime}%
}(X_{0,0})\right\Vert _{q}^{q}\right)  ^{\frac{1}{q}}\\
\leq4C_{q}\sum_{n,m\geq0}(2^{n}2^{m})^{\frac{q-1}{q}}\left\Vert E_{-2^{n}%
,-2^{m}}(X_{0,0})\right\Vert _{q}.
\end{gather*}

Since $\lVert E_{-2^{n},-2^{m}}(X_{0,0})\rVert$ is non-increasing in $n$ and
$m$, it follows that
\[
(2^{n}2^{m})^{\frac{q-1}{q}}\left\Vert E_{-2^{n},-2^{m}}(X_{0,0})\right\Vert
_{q}\leq4\sum_{u=2^{n-1}}^{2^{n}-1}\sum_{v=2^{m-1}}^{2^{m}-1}\frac{\lVert
E_{-u,-v}(X_{0,0})\rVert_{q}}{(uv)^{1/q}}.
\]

Therefore, by the relations above, we have proved that (\ref{higher moment q})
implies
\[
\sum_{u,v\geq1}\lVert\mathcal{P}_{-u,-v}(X_{0,0})\rVert_{q}<\infty.
\]
Similarly we have
\[
\sum_{u=1}^{\infty}\lVert\mathcal{P}_{-u,0}(X_{0,0})\rVert_{q}<\infty\text{
and }\sum_{v=1}^{\infty}\lVert\mathcal{P}_{0,-v}(X_{0,0})\rVert_{q}<\infty.
\]
Thus by stationarity (\ref{nm fact 0}) holds.
\end{proof}

\bigskip

\begin{proof}
[Proof of Theorem \ref{Cor mn}]The item (a) of the theorem follows as a
combination of Theorem \ref{Thm1nn} with Lemmas \ref{fact2} and \ref{fact3},
applied with $q=2.$

To prove item (b) of this theorem we combine Theorem \ref{Thm2nm} with Lemmas
\ref{fact2}.

Finaly, the conclusion (c) is a consequence of \ref{Thm2nm} combined with
Lemmas \ref{fact2} and \ref{fact3}, applied with $q>2.$
\end{proof}

\section{Random fields with multidimensional index sets}

In this section we extend our results to random fields indexed by $Z^{d}$,
$d>2$. By $\mathbf{u\leq n}$ we understand $\mathbf{u=}(u_{1},...,u_{d})$,
$\mathbf{n=}(n_{1},...,n_{d})$ and $1\leq u_{1}\mathbf{\leq}n_{1}$,...,$1\leq
u_{d}\mathbf{\leq}n_{d}\mathbf{.}$ We shall start with a strictly stationary
real-valued random field $\mathbf{\xi}=(\xi_{\mathbf{u}})_{\mathbf{u}\in
Z^{d}}$, defined on the canonical probability space $R^{Z^{d}}$ and\ define
the filtrations $\mathcal{F}_{\mathbf{u}}=\sigma(\xi_{\mathbf{j}}%
:\mathbf{j}\leq\mathbf{u})$. We shall assume that the filtration is commuting
if $E_{\mathbf{u}}E_{\mathbf{a}}(X)=E_{\mathbf{u}\wedge\mathbf{a}}(X),$ where
the minimum is taken coordinate-wise and we used notation $E_{\mathbf{u}%
}(X)=E(X|\mathcal{F}_{\mathbf{u}})$. We define
\begin{equation}
X_{\mathbf{m}}=f((\xi_{\mathbf{j}})_{\mathbf{j}\leq\mathbf{m}})\text{ and set
}S_{\mathbf{k}}=\sum\nolimits_{\mathbf{u}=\mathbf{1}}^{\mathbf{k}%
}X_{\mathbf{u}}. \label{Xdef d}%
\end{equation}
The variable $X_{\mathbf{0}}$ is assumed to be square integrable (in $L^{2}$)
and with mean $0$. We also define $T_{i}$ the coordinate-wise translations and
then
\[
X_{\mathbf{k}}=f(T_{1}^{k_{1}}\circ...\circ T_{d}^{k_{d}}(\xi_{\mathbf{u}%
})_{\mathbf{u}\leq\mathbf{0}}).
\]
Let $d$ be a function and define%
\begin{equation}
D_{\mathbf{m}}=d((\xi_{\mathbf{j}})_{\mathbf{j}\leq\mathbf{m}})\text{ and set
}M_{\mathbf{k}}=\sum\nolimits_{\mathbf{u}=\mathbf{1}}^{\mathbf{k}%
}D_{\mathbf{u}}. \label{def Dg}%
\end{equation}
Assume integrability. We say that $(D_{\mathbf{m}})_{\mathbf{m}\in Z^{d}}$ is
a field of martingale differences if $E_{\mathbf{a}}(D_{\mathbf{m}})=0$ if at
least one coordinate of $\mathbf{a}$ is strictly smaller than the
corresponding coordinate of $\mathbf{m.}$ Now we introduce the $d$-dimensional
projection operator. By using the fact that the filtration is commuting, it is
convenient to define projections $\mathcal{P}_{\mathbf{u}}$ in the following way%

\[
\mathcal{P}_{\mathbf{u}}(X):=P_{\mathbf{u}(1)}\circ P_{\mathbf{u}(2)}%
\circ...\circ P_{\mathbf{u}(d)}(X),
\]
where%
\begin{equation}
P_{\mathbf{u}(j)}(Y):=E(Y|\mathcal{F}_{\mathbf{u}})-E(Y|\mathcal{F}%
_{\mathbf{u}(j)}), \label{proj def d}%
\end{equation}
where $\mathbf{u}(j)$ has all the coordinates of $\mathbf{u}$ with the
exception of the $j$-th coordinate, which is $u_{j}-1$. For instance when
$d=3,$ $P_{\mathbf{u}(2)}(Y)=E(Y|\mathcal{F}_{u_{1},u_{2},u_{3}}%
)-E(Y|\mathcal{F}_{u_{1},u_{2}-1,u_{3}}).$

We say that a random field $(X_{\mathbf{n}})_{\mathbf{n}\in Z^{d}}$ admits a
martingale approximation if there is a field of martingale differences
$(D_{\mathbf{m}})_{\mathbf{m}\in Z^{d}}$ such that for almost all $\omega
\in\Omega$
\begin{equation}
\frac{1}{|\mathbf{n|}}E^{\omega}\left(  S_{\mathbf{n}}-M_{\mathbf{n}}\right)
^{2}\rightarrow0\text{ when }\min_{1\leq i\leq d}n_{i}\rightarrow\infty,
\label{martapprxd}%
\end{equation}
where $|\mathbf{n|=}n_{1}...n_{d}.$

Let $R_{\mathbf{n}}$ be the remainder term of the decomposition of
$S_{\mathbf{n}}$ such that
\[
S_{\mathbf{n}}=\sum_{\mathbf{u}=\mathbf{1}}^{\mathbf{n}}\mathcal{P}%
_{\mathbf{u}}(S_{\mathbf{n}})+R_{\mathbf{n}}.
\]

In this context we have:

\begin{theorem}
\label{Thm nn d} Assume that $(X_{\mathbf{n}})_{\mathbf{n}\in Z^{d}}$ is
defined by (\ref{Xdef d}) and there is an integer $i$, $1\leq i\leq d$, such
that $T_{i}$ is ergodic and the filtrations are commuting. In addition assume
that
\begin{equation}
\label{cond nn}\sum_{\mathbf{u\geq0}}\lVert\mathcal{P}_{\mathbf{0}%
}(X_{\mathbf{u}})\rVert_{2}<\infty. \ \
\end{equation}
Then, for almost all $\omega\in\Omega,$%
\[
(S_{n,\cdots, n}-R_{n,\cdots, n})/n^{d/2}\Rightarrow N(0,\sigma^{2})\text{
under }P^{\omega}\text{ when }n\rightarrow\infty.
\]

\end{theorem}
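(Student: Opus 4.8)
The plan is to follow the proof of Theorem~\ref{Thm1nn} in spirit, the only genuinely new feature being that the approximation error now splits into $d$ boundary directions rather than two. As before, I would reduce the statement to producing an almost sure ortho-martingale approximation and then invoke the quenched CLT for $d$-dimensional ortho-martingales of Peligrad and Voln\'{y} (2018) together with Theorem 25.4 in Billingsley (1995). Since we work only along the diagonal $\mathbf{n}=(n,\dots,n)$ with $n\to\infty$, the $L^{2}$-summability (\ref{cond nn}) suffices, and I would use the diagonal (cube-average) ergodic theorem for commuting Dunford--Schwartz operators, namely Theorem 2.8 in Ch.~6 of Krengel (1985), in place of the multiparameter Theorem 1.1 that governs Theorem~\ref{Thm2nm}.

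First I would note that (\ref{cond nn}) forces $f_{\mathbf{0}}:=\sum_{\mathbf{u}\geq\mathbf{0}}|\mathcal{P}_{\mathbf{0}}(X_{\mathbf{u}})|$ to be finite a.s.\ and square integrable, so that by dominated convergence $\mathcal{P}_{\mathbf{1}}(S_{\mathbf{n}})$ converges a.s.\ and in $L^{2}$ to $D_{\mathbf{1}}:=\sum_{\mathbf{u}\geq\mathbf{1}}\mathcal{P}_{\mathbf{1}}(X_{\mathbf{u}})$ as $\min_{i}n_{i}\to\infty$. Because each factor $P_{\mathbf{u}(j)}$ in the definition of $\mathcal{P}_{\mathbf{u}}$ annihilates conditioning on $\mathcal{F}_{\mathbf{u}(j)}$, the translated field $D_{\mathbf{k}}:=D_{\mathbf{1}}\circ T_{1}^{k_{1}-1}\circ\cdots\circ T_{d}^{k_{d}-1}$ is a field of martingale differences in the $d$-dimensional sense, and $M_{\mathbf{n}}=\sum_{\mathbf{u}=\mathbf{1}}^{\mathbf{n}}D_{\mathbf{u}}$ is the approximating ortho-martingale, with $\sigma^{2}=\lVert D_{\mathbf{1}}\rVert_{2}^{2}=\lVert\sum_{\mathbf{u}\geq\mathbf{0}}\mathcal{P}_{\mathbf{0}}(X_{\mathbf{u}})\rVert_{2}^{2}$.

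The core of the argument is the quenched negligibility
\[
\frac{1}{n^{d}}E_{\mathbf{0}}\bigl(S_{n,\dots,n}-R_{n,\dots,n}-M_{n,\dots,n}\bigr)^{2}\to 0\quad\text{a.s.}
\]
Starting from the decomposition $S_{\mathbf{n}}-R_{\mathbf{n}}=\sum_{\mathbf{i}=\mathbf{1}}^{\mathbf{n}}\mathcal{P}_{\mathbf{i}}(\sum_{\mathbf{u}=\mathbf{i}}^{\mathbf{n}}X_{\mathbf{u}})$ and the orthogonality of the martingale-difference array, the left-hand side equals $n^{-d}\sum_{\mathbf{i}=\mathbf{1}}^{\mathbf{n}}E_{\mathbf{0}}(\mathcal{P}_{\mathbf{i}}(\sum_{\mathbf{u}=\mathbf{i}}^{\mathbf{n}}X_{\mathbf{u}})-D_{\mathbf{i}})^{2}$. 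The key algebraic observation is that this error equals $-\sum_{\mathbf{u}\geq\mathbf{i},\,\exists j:\,u_{j}>n_{j}}\mathcal{P}_{\mathbf{i}}(X_{\mathbf{u}})$, so by the triangle inequality and a union bound over coordinates its modulus is at most $\sum_{j=1}^{d}\sum_{\mathbf{u}\geq\mathbf{i},\,u_{j}>n_{j}}|\mathcal{P}_{\mathbf{i}}(X_{\mathbf{u}})|$. Writing $E_{\mathbf{0}}(\mathcal{P}_{\mathbf{i}}(X_{\mathbf{u}}))^{2}$ through the commuting Dunford--Schwartz operators $Q_{\ell}$ (defined coordinatewise exactly as $Q_{1},Q_{2}$ in the two-dimensional proof), each of the $d$ resulting pieces becomes a cube average of $Q_{1}^{i_{1}}\cdots Q_{d}^{i_{d}}$ applied to the square of a truncated tail function whose $L^{2}$-norm tends to $0$ as the truncation level $c\to\infty$, again by (\ref{cond nn}).

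The main obstacle will be the bookkeeping for these $d$ boundary directions combined with the diagonal ergodic theorem. For each fixed $j$ I would split the corresponding index sum into a block of width $c$ adjacent to the upper endpoint and its complement: the boundary block is controlled by applying the ergodic theorem to $f_{\mathbf{0}}^{2}$ and letting $n\to\infty$ first, while the complementary block is controlled by applying it to the truncated function and then letting $c\to\infty$. Since at least one $T_{i}$ is ergodic, the cube averages converge a.s.\ to the constant expectation of the relevant square (the limit being identified, as in the two-dimensional argument, via Lemma 7.1 in Dedecker et al.\ (2014) in the ergodic coordinate), after which dominated convergence in $c$ closes the estimate and establishes the negligibility. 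This completes the proof.
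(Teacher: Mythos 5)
Your proposal is correct and follows essentially the same route as the paper: the paper proves Theorem \ref{Thm nm d} by transplanting the two-dimensional martingale-approximation argument (same construction of $D_{\mathbf{1}}$, same decomposition into $d$ boundary terms, same $A_{\mathbf{n}}(c)$/$B_{\mathbf{n}}(c)$ split handled via commuting Dunford--Schwartz operators and Lemma 7.1 of Dedecker et al.\ (2014)), and then obtains Theorem \ref{Thm nn d} exactly as you do, by replacing Theorem 1.1 in Ch.~6 of Krengel (1985) with the cube-average Theorem 2.8 in Ch.~6, which is what lets the $L^{2}$ condition (\ref{cond nn}) suffice along the diagonal.
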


\begin{theorem}
\label{Thm nm d} Furthermore, assume now condition (\ref{cond nn}) is
reinforced to%

\begin{equation}
\label{main condi2}\sum_{\mathbf{u\geq0}}\lVert\mathcal{P}_{\mathbf{0}%
}(X_{\mathbf{u}})\rVert_{\varphi}<\infty, \ \
\end{equation}
where $\varphi(x)=x^{2}\log^{d-1}(1+|x|)$ and $\lVert\cdot\rVert_{\varphi}$ is
defined by (\ref{phi norm}).\newline Then, for almost all $\omega\in\Omega,$
\[
\frac{1}{\sqrt{|\mathbf{n}|}}(S_{\mathbf{n}}-R_{\mathbf{n}}) \Rightarrow
N(0,\sigma^{2})\text{ under }P^{\omega}\text{ when }\min_{1\leq i\leq d}%
n_{i}\rightarrow\infty.
\]

\end{theorem}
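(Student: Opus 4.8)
The plan is to follow the proof of Theorem \ref{Thm2nm} line for line, replacing the two-dimensional bookkeeping by its $d$-dimensional analogue and upgrading only the ergodic-theoretic input. Since Peligrad and Voln\'y (2018) provide a quenched CLT for ortho-martingales in arbitrary dimension, by Theorem 25.4 in Billingsley (1995) it suffices to construct a field of martingale differences $(D_{\mathbf{m}})_{\mathbf{m}\in Z^{d}}$ whose partial sums $M_{\mathbf{n}}$ approximate $S_{\mathbf{n}}-R_{\mathbf{n}}$ in the quenched sense
\[
\frac{1}{|\mathbf{n}|}E_{\mathbf{0}}\left(S_{\mathbf{n}}-R_{\mathbf{n}}-M_{\mathbf{n}}\right)^{2}\to 0\quad\text{a.s. as }\min_{1\le i\le d}n_{i}\to\infty .
\]
Establishing this negligibility is the entire content of the proof.

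First I would set $f_{\mathbf{0}}:=\sum_{\mathbf{u}\ge\mathbf{0}}|\mathcal{P}_{\mathbf{0}}(X_{\mathbf{u}})|$; the triangle inequality and (\ref{main condi2}) give $f_{\mathbf{0}}<\infty$ a.s. together with $\lVert f_{\mathbf{0}}\rVert_{\varphi}<\infty$, hence the crucial moment bound $E\bigl(f_{\mathbf{0}}^{2}\log^{d-1}(1+|f_{\mathbf{0}}|)\bigr)<\infty$. I would then define $D_{\mathbf{1}}=\lim\mathcal{P}_{\mathbf{1}}(S_{\mathbf{n}})=\sum_{\mathbf{u}\ge\mathbf{1}}\mathcal{P}_{\mathbf{1}}(X_{\mathbf{u}})$ (the limit existing a.s. and in $L^{2}$ by dominated convergence, the $L^{2}$-summability from (\ref{main condi2}) furnishing the dominating function) and propagate it by the coordinate shifts, $D_{\mathbf{k}}=\hat T_{1}^{k_{1}-1}\cdots\hat T_{d}^{k_{d}-1}D_{\mathbf{1}}$ with $\hat T_{i}f=f\circ T_{i}$. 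The commuting property of the filtration makes $(D_{\mathbf{k}})$ a field of martingale differences. Feeding the decomposition $S_{\mathbf{n}}-R_{\mathbf{n}}=\sum_{\mathbf{i}=\mathbf{1}}^{\mathbf{n}}\mathcal{P}_{\mathbf{i}}\bigl(\sum_{\mathbf{u}=\mathbf{i}}^{\mathbf{n}}X_{\mathbf{u}}\bigr)$ and the orthogonality of the differences $\mathcal{P}_{\mathbf{i}}(\sum_{\mathbf{u}=\mathbf{i}}^{\mathbf{n}}X_{\mathbf{u}})-D_{\mathbf{i}}$ into the display above rewrites the left side as $\frac{1}{|\mathbf{n}|}\sum_{\mathbf{i}=\mathbf{1}}^{\mathbf{n}}E_{\mathbf{0}}\bigl(\mathcal{P}_{\mathbf{i}}(\sum_{\mathbf{u}=\mathbf{i}}^{\mathbf{n}}X_{\mathbf{u}})-D_{\mathbf{i}}\bigr)^{2}$.

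Next I would introduce the $d$ commuting Dunford-Schwartz operators $Q_{1},\dots,Q_{d}$, where $Q_{j}$ conditions on the coordinate-$j$ sigma-field (coordinate $j$ at level $0$, all others free) and composes with $\hat T_{j}$; these are the direct analogues of $Q_{1},Q_{2}$ in the $d=2$ proof and satisfy $E_{\mathbf{0}}(\mathcal{P}_{\mathbf{i}}(X_{\mathbf{u}}))^{2}=Q_{1}^{i_{1}}\cdots Q_{d}^{i_{d}}(\mathcal{P}_{\mathbf{0}}(X_{\mathbf{u}-\mathbf{i}}))^{2}$. Writing $\mathcal{P}_{\mathbf{i}}(\sum_{\mathbf{u}=\mathbf{i}}^{\mathbf{n}}X_{\mathbf{u}})-D_{\mathbf{i}}$ as a sum of tails, one for each coordinate $j$ that overshoots its upper limit $n_{j}$, the triangle inequality bounds the error by a sum of $d$ terms $I^{(1)}_{\mathbf{n}},\dots,I^{(d)}_{\mathbf{n}}$, each of the form $\frac{1}{|\mathbf{n}|}\sum_{\mathbf{i}=\mathbf{1}}^{\mathbf{n}}Q_{1}^{i_{1}}\cdots Q_{d}^{i_{d}}\bigl(\sum_{u_{j}\ge n_{j}+1-i_{j}}\sum_{\text{other }u_{k}\ge 0}|\mathcal{P}_{\mathbf{0}}(X_{\mathbf{u}})|\bigr)^{2}$. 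By symmetry I would treat only one of them, splitting its $i_{j}$-sum at a fixed cutoff $c$ into a boundary block $B_{\mathbf{n}}(c)$ (the last $c$ layers, dominated by $f_{\mathbf{0}}^{2}$) and a bulk block $A_{\mathbf{n}}(c)$ (dominated by $f_{\mathbf{0}}(c)^{2}$, where $f_{\mathbf{0}}(c)=\sum_{u_{j}\ge c}\sum_{\text{other }u_{k}\ge 0}|\mathcal{P}_{\mathbf{0}}(X_{\mathbf{u}})|$).

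The main obstacle, and the only place the dimension genuinely enters, is the ergodic step. Here I would invoke the multiparameter ergodic theorem for the $d$ commuting Dunford-Schwartz operators (Krengel (1985), Theorem 1.1, Ch. 6), whose integrability hypothesis is exactly the $E\bigl(g^{2}\log^{d-1}(1+|g|)\bigr)<\infty$ condition that the Young function $\varphi(x)=x^{2}\log^{d-1}(1+|x|)$ in (\ref{main condi2}) was engineered to supply; applied to $g=f_{\mathbf{0}}$ and to $g=f_{\mathbf{0}}(c)$ it yields a.s. convergence of the $d$-fold averages. Iterating Lemma 7.1 in Dedecker et al. (2014) along the ergodic direction $T_{i}$ identifies both limits as the constants $E(f_{\mathbf{0}}^{2})$ and $E(f_{\mathbf{0}}(c)^{2})$, whence $B_{\mathbf{n}}(c)\to0$ for each fixed $c$ (being a difference of two averages sharing the same limit) while $\limsup A_{\mathbf{n}}(c)\le E(f_{\mathbf{0}}(c)^{2})$. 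Finally $f_{\mathbf{0}}(c)\downarrow0$ as $c\to\infty$ by (\ref{main condi2}), so dominated convergence forces $E(f_{\mathbf{0}}(c)^{2})\to0$ and the total error vanishes, completing the ortho-martingale approximation and thereby the theorem. The proof of Theorem \ref{Thm nn d} would then require only the replacement of this ergodic theorem by its diagonal counterpart (Krengel (1985), Theorem 2.8, Ch. 6), as in the passage from Theorem \ref{Thm2nm} to Theorem \ref{Thm1nn}.
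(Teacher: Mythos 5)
Your proposal is correct and follows essentially the same route as the paper: the paper likewise reduces Theorem \ref{Thm nm d} to the martingale approximation argument of Theorem \ref{Thm2nm}, replaces the two shifts by the $d$ commuting Dunford--Schwartz operators $Q_{1},\dots,Q_{d}$, bounds the error by $d$ tail terms split at a cutoff $c$ into blocks $A_{\mathbf{n}}(c)$ and $B_{\mathbf{n}}(c)$, and invokes Theorem 1.1, Ch.~6 of Krengel (1985) together with Lemma 7.1 of Dedecker et al. (2014), the condition $E\bigl(f_{\mathbf{0}}^{2}\log^{d-1}(1+|f_{\mathbf{0}}|)\bigr)<\infty$ supplied by (\ref{main condi2}) being exactly the needed integrability. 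Your treatment of the ergodic identification of the limits and the passage to Theorem \ref{Thm nn d} via Theorem 2.8, Ch.~6 of Krengel (1985) also matches the paper.
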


\begin{corollary}
\label{Cor2} Assume that the conditions of Theorem \ref{Thm nm d} hold and for
all $j$, $1\leq j\leq d$ we have
\begin{equation}
\label{Reminder condi d}\frac{1}{|\mathbf{n|}}E_{\mathbf{0}}%
\biggl(E_{\mathbf{n}_{j}}^{2}(S_{\mathbf{n}})\biggr)\rightarrow0\text{ a.s.
when }\min_{1\leq i\leq d}n_{i}\rightarrow\infty.
\end{equation}
where $\mathbf{n}_{j}\mathbf{\in}Z^{d}$ has the $j$-th coordinate $0$ and the
other coordinates equal to the coordinates of $\mathbf{n}$. Then, for almost
all $\omega\in\Omega$,
\begin{equation}
\label{quenched nm d}S_{\mathbf{n}}/\sqrt{|\mathbf{n}|}\Rightarrow
N(0,\sigma^{2}) \text{ under } P^{\omega} \text{ when } \min_{1\leq i\leq
d}n_{i}\rightarrow\infty.
\end{equation}
If the conditions of Theorem \ref{Thm nn d} hold and (\ref{Reminder condi d})
holds with $\mathbf{n}=(n,n,\cdots,n)$, then for almost all $\omega\in\Omega
$,
\begin{equation}
\label{quenched nn d}\frac{1}{n^{d/2}}S_{n,\cdots,n}\Rightarrow N(0,\sigma
^{2}) \text{ under } P^{\omega} \text{ when } n\rightarrow\infty.
\end{equation}

\end{corollary}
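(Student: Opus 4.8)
Following the proof of Corollary~\ref{Cor}, the plan is to combine Theorem~\ref{Thm nm d} with Theorem 25.4 in Billingsley (1995) so as to reduce (\ref{quenched nm d}) to the quenched negligibility of the remainder, namely
\[
\frac{1}{|\mathbf{n}|}E_{\mathbf{0}}(R_{\mathbf{n}}^{2})\rightarrow0\text{ a.s. when }\min_{1\leq i\leq d}n_{i}\rightarrow\infty .
\]
Indeed, Theorem~\ref{Thm nm d} already delivers the quenched CLT for $(S_{\mathbf{n}}-R_{\mathbf{n}})/\sqrt{|\mathbf{n}|}$, so once the display above is established we get $R_{\mathbf{n}}/\sqrt{|\mathbf{n}|}\to0$ in $P^{\omega}$-probability for almost all $\omega$, and the conditional form of Slutsky's theorem yields (\ref{quenched nm d}).

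The first step is to make $R_{\mathbf{n}}$ explicit. Using the commuting property, each composed projection can be written as the $d$-fold mixed difference $\mathcal{P}_{\mathbf{u}}(X)=\sum_{B\subseteq\{1,\dots,d\}}(-1)^{|B|}E_{\mathbf{u}-e_{B}}(X)$, where $\mathbf{u}-e_{B}$ denotes $\mathbf{u}$ with its coordinates indexed by $B$ each decreased by one. Summing over the box $\mathbf{1}\leq\mathbf{u}\leq\mathbf{n}$ telescopes coordinate by coordinate and collapses to
\[
\sum_{\mathbf{u}=\mathbf{1}}^{\mathbf{n}}\mathcal{P}_{\mathbf{u}}(S_{\mathbf{n}})=\sum_{A\subseteq\{1,\dots,d\}}(-1)^{|A|}E_{\mathbf{n}^{A}}(S_{\mathbf{n}}),
\]
where $\mathbf{n}^{A}\in Z^{d}$ has $j$-th coordinate $0$ for $j\in A$ and $n_{j}$ otherwise. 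Since the term $A=\emptyset$ equals $E_{\mathbf{n}}(S_{\mathbf{n}})=S_{\mathbf{n}}$, we obtain
\[
R_{\mathbf{n}}=S_{\mathbf{n}}-\sum_{\mathbf{u}=\mathbf{1}}^{\mathbf{n}}\mathcal{P}_{\mathbf{u}}(S_{\mathbf{n}})=\sum_{\emptyset\neq A\subseteq\{1,\dots,d\}}(-1)^{|A|+1}E_{\mathbf{n}^{A}}(S_{\mathbf{n}}),
\]
a finite sum of $2^{d}-1$ terms. For $d=2$ this recovers $R_{n,m}=E_{n,0}(S_{n,m})+E_{0,m}(S_{n,m})-E_{0,0}(S_{n,m})$ from Corollary~\ref{Cor}, and in general $\mathbf{n}^{\{j\}}=\mathbf{n}_{j}$, the vector appearing in the regularity assumption (\ref{Reminder condi d}).

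Next, by the elementary inequality $(\sum_{i=1}^{k}a_{i})^{2}\leq k\sum_{i=1}^{k}a_{i}^{2}$ with $k=2^{d}-1$,
\[
\frac{1}{|\mathbf{n}|}E_{\mathbf{0}}(R_{\mathbf{n}}^{2})\leq\frac{2^{d}-1}{|\mathbf{n}|}\sum_{\emptyset\neq A}E_{\mathbf{0}}\bigl(E_{\mathbf{n}^{A}}^{2}(S_{\mathbf{n}})\bigr),
\]
so it suffices to show that each summand tends to $0$ a.s. For $|A|=1$, say $A=\{j\}$, we have $\mathbf{n}^{A}=\mathbf{n}_{j}$ and the conclusion is exactly (\ref{Reminder condi d}). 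For $|A|\geq2$, choose any $j\in A$; since $\mathbf{n}^{A}\leq\mathbf{n}_{j}$ coordinate-wise we have $\mathcal{F}_{\mathbf{n}^{A}}\subseteq\mathcal{F}_{\mathbf{n}_{j}}$, hence $E_{\mathbf{n}^{A}}=E_{\mathbf{n}^{A}}E_{\mathbf{n}_{j}}$, and by conditional Jensen together with the commuting relation $E_{\mathbf{0}}E_{\mathbf{n}^{A}}=E_{\mathbf{0}\wedge\mathbf{n}^{A}}=E_{\mathbf{0}}$ (using $\mathbf{0}\wedge\mathbf{n}^{A}=\mathbf{0}$),
\[
E_{\mathbf{0}}\bigl(E_{\mathbf{n}^{A}}^{2}(S_{\mathbf{n}})\bigr)=E_{\mathbf{0}}\bigl((E_{\mathbf{n}^{A}}E_{\mathbf{n}_{j}}(S_{\mathbf{n}}))^{2}\bigr)\leq E_{\mathbf{0}}\bigl(E_{\mathbf{n}_{j}}^{2}(S_{\mathbf{n}})\bigr).
\]
Thus every term with $|A|\geq2$ is dominated by a single-index term controlled by (\ref{Reminder condi d}); this is the $d$-dimensional analogue of $E_{0,0}^{2}(S_{n,m})\leq E_{0,0}(E_{0,m}^{2}(S_{n,m}))$ used for Corollary~\ref{Cor}. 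Summing the finitely many terms gives the negligibility of $R_{\mathbf{n}}$ and hence (\ref{quenched nm d}).

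The second assertion (\ref{quenched nn d}) follows by the identical argument run along $\mathbf{n}=(n,\dots,n)$, with Theorem~\ref{Thm nn d} replacing Theorem~\ref{Thm nm d} (the only change being the ergodic theorem underlying the diagonal normalization $n^{d/2}$). I expect the main obstacle to be the rigorous derivation of the inclusion–exclusion identity for $R_{\mathbf{n}}$: one must verify that the composed coordinate-wise operators $P_{\mathbf{u}(j)}$ factor correctly through the commuting filtration, so that $\mathcal{P}_{\mathbf{u}}=\sum_{B}(-1)^{|B|}E_{\mathbf{u}-e_{B}}$ and the summation over the box telescopes to $\sum_{A}(-1)^{|A|}E_{\mathbf{n}^{A}}$, with only the $A=\emptyset$ term carrying the full conditioning $E_{\mathbf{n}}=\mathrm{Id}$. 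The comparison step for $|A|\geq2$ is the other delicate point, resting essentially on $\mathbf{0}\wedge\mathbf{n}^{A}=\mathbf{0}$ and the commuting property.
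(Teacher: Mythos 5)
Your proposal is correct and follows the paper's overall strategy: reduce (\ref{quenched nm d}) to the almost sure negligibility of $E_{\mathbf{0}}(R_{\mathbf{n}}^{2})/|\mathbf{n}|$ by combining Theorem \ref{Thm nm d} with Theorem 25.4 in Billingsley (1995), and then kill the remainder using the regularity condition (\ref{Reminder condi d}), conditional Jensen, and the commuting property. The paper's own proof of this corollary is a one-line deferral to Corollary \ref{Cor}, whose two-dimensional argument computes $E_{0,0}(R_{n,m}^{2})$ \emph{exactly} via the identity (\ref{estimate}), the cross terms being handled by the commuting filtration, and then uses the single inequality $E_{0,0}^{2}(S_{n,m})\leq E_{0,0}\left(E_{0,m}^{2}(S_{n,m})\right)$. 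You do something slightly different in the middle: you first derive the explicit inclusion--exclusion representation $R_{\mathbf{n}}=\sum_{\emptyset\neq A\subseteq\{1,\dots,d\}}(-1)^{|A|+1}E_{\mathbf{n}^{A}}(S_{\mathbf{n}})$ from the telescoping of the projections, then forgo any exact variance computation in favor of the crude bound $\bigl(\sum_{i}a_{i}\bigr)^{2}\leq(2^{d}-1)\sum_{i}a_{i}^{2}$, and finally dominate every term with $|A|\geq2$ by a single-index term, $E_{\mathbf{0}}\bigl(E_{\mathbf{n}^{A}}^{2}(S_{\mathbf{n}})\bigr)\leq E_{\mathbf{0}}\bigl(E_{\mathbf{n}_{j}}^{2}(S_{\mathbf{n}})\bigr)$ for $j\in A$, via tower, Jensen, and $E_{\mathbf{0}}E_{\mathbf{n}^{A}}=E_{\mathbf{0}}$. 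This loses a constant factor (irrelevant, since only convergence to zero is needed) but buys robustness: it avoids the cross-term bookkeeping that an exact expansion of $E_{\mathbf{0}}(R_{\mathbf{n}}^{2})$ would require among the $2^{d}-1$ summands, and it makes explicit the general-$d$ combinatorics that the paper leaves implicit. Your domination step for $|A|\geq2$ is precisely the $d$-dimensional analogue of the paper's inequality above, so the two arguments ultimately rest on the same key facts, with yours being the more detailed and self-contained version for $d>2$.
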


\begin{theorem}
\label{Cor nn nm d} Assume that $(X_{\mathbf{n}})_{n\in Z^{d}}$ is defined by
(\ref{Xdef d}) and the filtrations are commuting. Also assume that there is an
integer $i$, $1\leq i\leq d$, such that $T_{i}$ is ergodic and in addition for
$q>2$,
\begin{equation}
\sum_{\mathbf{u}\geq\mathbf{1}}\frac{\lVert E_{\mathbf{1}}(X_{\mathbf{u}%
})\rVert_{q}}{{|\mathbf{u}|}^{1/q}}<\infty. \label{higher moment d}%
\end{equation}
(a) If $q=2$, then the quenched convergence in (\ref{quenched nn d}) holds
\ \ \ \ \ \ \ \ \ \ \ \ \ \ \ \ \ \ \ \ \ \ \ \ \ \ \ \ \ \ \ \ \ \ \ \ \ \ \ \ \ \ \ \ \ \ \ \ \ \ \ \ \ \ \ \ \ \ \ \ \ \ \ \ \ \ \ \ \ \ \linebreak%
(b) If $q>2$, then the quenched convergence in (\ref{quenched nm d}) holds.
\end{theorem}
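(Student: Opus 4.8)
The plan is to mirror the proof of its two-dimensional counterpart, Theorem \ref{Cor mn}: combine the quenched CLTs with random centering (Theorems \ref{Thm nn d} and \ref{Thm nm d}) with Corollary \ref{Cor2}, which discards the centering $R_{\mathbf{n}}$ once the regularity condition (\ref{Reminder condi d}) is in force. It therefore suffices to prove two $d$-dimensional versions of the lemmas of Section 3. The first, generalizing Lemma \ref{fact3}, is that (\ref{higher moment d}) implies $\sum_{\mathbf{u}\geq\mathbf{0}}\lVert\mathcal{P}_{\mathbf{0}}(X_{\mathbf{u}})\rVert_{q}<\infty$; for $q=2$ this is precisely (\ref{cond nn}), while for $q>2$ it yields (\ref{main condi2}), because $\varphi(x)=x^{2}\log^{d-1}(1+|x|)$ satisfies $\varphi(x)\leq C_{q}x^{q}$ for large $x$ and hence $\lVert\cdot\rVert_{\varphi}\leq C_{q}\lVert\cdot\rVert_{q}$ on the probability space. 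The second, generalizing Lemma \ref{fact2}, is that (\ref{higher moment d}) with $q=2$ (which is implied by the case $q>2$, since $\lVert\cdot\rVert_{2}\leq\lVert\cdot\rVert_{q}$ and $|\mathbf{u}|^{1/2}\geq|\mathbf{u}|^{1/q}$) implies (\ref{Reminder condi d}) for every $j$.

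For the first lemma I would iterate the dyadic-blocking/Rosenthal argument of Lemma \ref{fact3} $d$ times rather than twice. Splitting each coordinate range into dyadic blocks $[2^{n_{i}},2^{n_{i}+1})$, applying the Rosenthal inequality (Theorem \ref{Rosenthal ineq}) once per coordinate, and using that $\lVert E_{-2^{n_{1}},\dots,-2^{n_{d}}}(X_{\mathbf{0}})\rVert_{q}$ is coordinatewise non-increasing, one bounds $\sum_{\mathbf{u}\geq\mathbf{1}}\lVert\mathcal{P}_{-\mathbf{u}}(X_{\mathbf{0}})\rVert_{q}$ by a constant multiple of the left-hand side of (\ref{higher moment d}). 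The lower-dimensional boundary sums, where some coordinates are held fixed, are treated identically, and stationarity then gives (\ref{nm fact 0}) in its $d$-dimensional form. This step is essentially bookkeeping and presents no real difficulty.

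The genuine work is in the second lemma. As in (\ref{estimate}), I would first express $E_{\mathbf{0}}(R_{\mathbf{n}}^{2})$ through the inclusion--exclusion remainder $R_{\mathbf{n}}=\sum_{\emptyset\neq A\subseteq\{1,\dots,d\}}(-1)^{|A|+1}E_{\mathbf{n}_{A}}(S_{\mathbf{n}})$, where $\mathbf{n}_{A}$ is obtained from $\mathbf{n}$ by zeroing the coordinates in $A$ (so $\mathbf{n}_{\{j\}}=\mathbf{n}_{j}$). Since $\mathcal{F}_{\mathbf{n}_{A}}\subseteq\mathcal{F}_{\mathbf{n}_{j}}$ whenever $j\in A$, the Cauchy--Schwarz inequality together with the contraction property of conditional expectation bound $E_{\mathbf{0}}(R_{\mathbf{n}}^{2})$ by a constant multiple of $\sum_{j}E_{\mathbf{0}}(E_{\mathbf{n}_{j}}^{2}(S_{\mathbf{n}}))$, so it is enough to prove (\ref{Reminder condi d}) for each fixed $j$. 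Fixing $j$, after disposing of $E_{\mathbf{0}}^{2}(S_{\mathbf{n}})/|\mathbf{n}|$ exactly as in (\ref{negl1}), I would reduce to $|\mathbf{n}|^{-1}E_{\mathbf{0}}\bigl((E_{\mathbf{n}_{j}}(S_{\mathbf{n}})-E_{\mathbf{0}}(S_{\mathbf{n}}))^{2}\bigr)\to 0$, telescope and expand $E_{\mathbf{n}_{j}}-E_{\mathbf{0}}$ into projections along the $d-1$ raised coordinates (those different from $j$), and then split the resulting average on the summation index $u_{j}$ of the retained coordinate into a finite-block part ($1\leq u_{j}\leq c$) and a tail part. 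The tail carries the weight $u_{j}^{-1/q}$ produced by a $d$-dimensional version of Lemma \ref{fact1}, so that the averaged function is square integrable and its contribution vanishes as $c\to\infty$.

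The crux is the averaging over the $d-1$ raised coordinates, which now requires a genuinely multiparameter Dunford--Schwartz ergodic theorem for the commuting operators $Q_{i}$, and it is here that the dichotomy between (a) and (b) enters. For the cube $\mathbf{n}=(n,\dots,n)$ of part (a) the relevant averages are restricted (bounded aspect ratio), so the $L^{2}$ integrability furnished by $q=2$ suffices, via the diagonal ergodic theorem already invoked for Theorem \ref{Thm nn d} (Krengel (1985), Theorem 2.8, Ch. 6); combined with Theorem \ref{Thm nn d} and Corollary \ref{Cor2} this yields (\ref{quenched nn d}). For the unrestricted rectangular limit $\min_{i}n_{i}\to\infty$ of part (b), almost sure convergence of $(d-1)$-parameter rectangular averages requires an $L(\log^{+}L)^{d-2}$ moment on the averaged square, which is exactly what $q>2$ provides through the first lemma (this same $q>2$ being what delivers (\ref{main condi2}) for Theorem \ref{Thm nm d}); combined with Theorem \ref{Thm nm d} and Corollary \ref{Cor2} this yields (\ref{quenched nm d}). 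I expect the correct $(d-1)$-fold handling of $E_{\mathbf{n}_{j}}-E_{\mathbf{0}}$ and this multiparameter ergodic step to be the main obstacle; once the two lemmas are in place, the assembly of (a) and (b) is immediate.
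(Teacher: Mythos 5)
Your proposal is correct and takes essentially the same route as the paper: both reduce the theorem to verifying the hypotheses of Theorems \ref{Thm nn d} and \ref{Thm nm d} together with the regularity condition (\ref{Reminder condi d}) of Corollary \ref{Cor2}, by proving $d$-dimensional analogues of Lemmas \ref{fact1}--\ref{fact3} (iterated dyadic blocking with Rosenthal's inequality, weighted projection summability, and the same Dunford--Schwartz dichotomy between Krengel's Theorem 2.8, Ch.~6 for cubes with $q=2$ and Theorem 1.1, Ch.~6 for rectangles with $q>2$). The only organizational difference is that the paper establishes (\ref{Reminder condi d}) by induction on the number $k$ of nonzero coordinates of $\mathbf{n}^{(k)}$ in the quantity $E_{\mathbf{0}}\bigl(E_{\mathbf{n}^{(k)}}^{2}(S_{\mathbf{n}})\bigr)/|\mathbf{n}|$, whereas you unroll the same decomposition into a one-shot telescoping of $E_{\mathbf{n}_{j}}-E_{\mathbf{0}}$ with mixed terms playing the role of the induction hypothesis; the two are equivalent in substance.
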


As for the case of random fields with two indexes, we start with the proof of
Theorem \ref{Thm nm d}, since the proof of Theorem \ref{Thm nn d} is similar
with the exception that we use different ergodic theorems. \bigskip

\begin{proof}
[Proof of Theorem \ref{Thm nm d}]The proof of this theorem is straightforward
following the same lines of proofs as for a double-indexed random field. It is
easy to see that, by using the commutativity property of the filtration, the
martingale approximation argument in the proof of Theorem \ref{Thm2nm} remains
unchanged if we replace $Z^{2}$ with $Z^{d}$ for $d\geq3$. The definition of
the approximating martingale is also clear. The only difference in the proof
is that for the validation of the limit in (\ref{R1}) and (\ref{R2}) when
$\min_{1\leq i\leq d}n_{i}\rightarrow\infty$, in order to apply the ergodic
theorem for Dunford-Schwartz operators, conform to Theorem 1.1 in Ch. 6 in
Krengel (1985), we have to assume $E\left[  f^{2}_{\mathbf{0}}\log
^{d-1}(1+|f_{\mathbf{0}}|)\right]  <\infty$, which is implied by
(\ref{main condi2}).\newline

More precisely, let us denote by $\hat{T_{i}}$, $1\leq i\leq d$, the operators
defined by $\hat{T_{i}}f=f\circ T_{i}$. Then for $\mathbf{i}=(i_{1}%
,\cdots,i_{d})\in Z^{d}$, we define $Q^{\mathbf{i}}=\Pi_{k=1}^{d}Q_{k}^{i_{k}%
}$ where $(Q_{i})_{1\leq i\leq d}$ are operators associated with
coordinate-wise translations $(T_{i})_{1\leq i\leq d}$ defined as follows
\[
Q_{1}(f)=E_{0,\infty,\cdots,\infty}(\hat{T_{1}}f), Q_{2}(f)=E_{\infty
,0,\infty,\cdots,\infty}(\hat{T_{2}} f), \cdots, Q_{d}(f)=E_{\infty
,\cdots,\infty,0}(\hat{T_{d}} f).
\]

Then, we bound the following quantity
\[
\frac{1}{|\mathbf{n}|}E_{\mathbf{0}}\left[  |S_{\mathbf{n}}-R_{\mathbf{n}%
}-M_{\mathbf{n}}|^{2}\right]
\]
by the sum of $d$ terms with the first term of them in the form
\[
I_{\mathbf{n}}=\frac{1}{|\mathbf{n}|}\sum_{\mathbf{i=1}}^{\mathbf{n}%
}\mathbf{Q}^{\mathbf{i}}\left(  \sum_{u=n_{1}+1-i_{1}}^{\infty}\sum
_{\mathbf{v\geq0}}|\mathcal{P}_{\mathbf{0}}(X_{u,\mathbf{v}})|\right)
^{2}\text{ where }\mathbf{v}\in Z^{d-1}.
\]

By symmetry, we only need to deal with this one. Let $c$ be a fixed integer
satisfying $c<n_{1}$, we decompose $I_{\mathbf{n}}$ into two parts:
\[
\frac{1}{|\mathbf{n}|}\sum_{i_{1}=1}^{n_{1}-c}\sum_{\mathbf{i^{\prime}=1}%
}^{\mathbf{n^{\prime}}}\mathbf{Q}^{\mathbf{i}}\left(  \sum_{u=n_{1}+1-i_{1}%
}^{\infty}\sum_{\mathbf{v\geq0}}|\mathcal{P}_{\mathbf{0}}(X_{u,\mathbf{v}%
})|\right)  ^{2}:=A_{\mathbf{n}}(c)
\]
and
\[
\frac{1}{|\mathbf{n}|}\sum_{i_{1}=n_{1}-c+1}^{n_{1}}\sum_{\mathbf{i^{\prime
}=1}}^{\mathbf{n^{\prime}}}\mathbf{Q}^{\mathbf{i}}\left(  \sum_{u=n_{1}%
+1-i_{1}}^{\infty}\sum_{\mathbf{v\geq0}}|\mathcal{P}_{\mathbf{0}%
}(X_{u,\mathbf{v}})|\right)  ^{2}:=B_{\mathbf{n}}(c)
\]
with $\mathbf{i}^{\prime}=(i_{2},\cdots,i_{d})$ and $\mathbf{n}^{\prime
}=(n_{2},\cdots,n_{d})$. Afterwards, we just proceed by following step by step
the proof for negligibility of $A_{n,m}(c)$ and $B_{n,m}$ (see (\ref{A}) and
(\ref{B}) from the proof of Theorem \ref{Thm2nm}).

The proof of Theorem \ref{Thm nn d} follows by similar arguments, just
replacing Theorem 1.1 in Ch. 6 in Krengel (1985) by Theorem 2.8 in Ch. 6 in
the same book.
\end{proof}

\bigskip

\begin{proof}
[Proof of Corollary \ref{Cor2}]The negligibility of the reminder
$R_{\mathbf{n}}$ can be shown exactly in the same way as the negligibility of
the term $R_{n,m}$ in the proof of Corollary \ref{Cor}.
\end{proof}

\bigskip

\begin{proof}
[Proof of Theorem \ref{Cor nn nm d}]As in the proof of (\ref{fact1 d}) and
(\ref{fact2 d}) in Theorem \ref{Cor mn}, we can show that
(\ref{higher moment d}) implies the following facts:%

\begin{equation}
\sum_{\mathbf{u\geq1}}\frac{1}{\sqrt{|\mathbf{u}|}}\sum_{v\geq0}%
\Vert{P_{\mathbf{0}(d)}(X_{\mathbf{u},v})}\Vert_{q}<\infty, \label{fact d}%
\end{equation}

\begin{equation}
\sum_{\mathbf{v\geq0}}\Vert{P_{\mathbf{0}}(X_{u,\mathbf{v}})}\Vert_{q}%
<\infty\label{fact1 d}%
\end{equation}
and
\begin{equation}
\sum_{u\geq1}\frac{1}{\sqrt{u}}\sum_{\mathbf{v\geq0}}\Vert{P_{\mathbf{0}%
}(X_{u,\mathbf{v}})}\Vert_{q}<\infty, \label{fact2 d}%
\end{equation}
where $\mathbf{0}=(0,\cdots,0)\in Z^{d},\mathbf{u},\mathbf{v}\in Z^{d-1}$ and
$P_{\mathbf{0}}=P_{\mathbf{0}(2)}\circ P_{\mathbf{0}(3)}\circ\cdots\circ
P_{\mathbf{0}(d)}$ with $P_{\mathbf{0}(j)}$ defined by (\ref{proj def d}).

To prove the corollary, we need to show that
\begin{equation}
\label{induction}\frac{1}{|\mathbf{n}|}E_{\mathbf{0}}\left(  E_{\mathbf{n}%
^{(k)}}^{2}(S_{\mathbf{n}})\right)  \rightarrow0 \text{ a.s. when }
\min_{1\leq i\leq d}n_{i}\rightarrow\infty,
\end{equation}
where $\mathbf{n}^{(k)}\in Z^{d}$ has $k$ coordinates equal to the
corresponding coordinates of $\mathbf{n}$ and the other $n-k$ coordinates zero
for all $0\leq k\leq d-1$. We will proceed by induction.

First, we have to show that
\[
\frac{E^{2}_{\mathbf{0}}(S_{\mathbf{n}})}{|\mathbf{n}|}\rightarrow0 \text{
a.s.} \text{ and }\frac{1}{|\mathbf{n}|}E_{\mathbf{0}}\left(  E_{0,\cdots, 0,
n_{d}}^{2}(S_{\mathbf{n}})\right)  \rightarrow0 \text{ a.s. when } \min_{1\leq
i\leq d}n_{i}\rightarrow\infty,
\]

which are easy to establish by similar arguments as in the proof of Theorem
\ref{Cor mn}, by using (\ref{higher moment d}) and (\ref{fact d}). That is,
(\ref{induction}) holds for $k=0$ and $k=1$. Now assume that for $k<d-1$ the
result holds. The fact that the result holds for $k=d-1$ follows
straightforward by using (\ref{fact1 d}) and (\ref{fact2 d}). The proof of
this theorem is complete now.
\end{proof}


\section{Functional CLT}

In this section we give the functional CLT form for Theorem \ref{Cor mn}. It
should be noted that for $d=1$ the quenched functional CLT in the
corresponding setting is due to Cuny and Voln\'{y} (2013). Their approach is
based on an almost sure maximal martingale approximation and involves the
introduction of two new parameters. This method cannot be easily applied for
random fields since it leads to quite complicated remainder terms in the
maximal martingale approximation. Fortunately, their innovative idea of using
the maximal operator can be also applied for random fields, as we shall see in
the direct proof bellow.

For $(s,t)\in\lbrack0,1]^{2},$ we introduce the stochastic process
\[
W_{n,m}(t,s)=\frac{1}{\sqrt{nm}}S_{[nt],[ms]}.
\]
where $[x]$ denotes the integer part of $x.$ We shall denote by
$(W(t,s))_{(t,s)\in\lbrack0,1]^{2}}$ the standard $2$-dimensional Brownian
sheet and we shall investigate the weak convergence in $D([0,1]^{2})$ endowed
with the uniform topology of $(W_{n,m}(t,s))$ to $(W(t,s)).$ As usual, the
proof of this theorem involves two steps, namely the proof of the convergence
of the finite dimensional distributions to the corresponding ones of the
standard $2$-dimensional Brownian sheet and tightness.

We call the random field $(X_{k,\ell})$ defined by (\ref{defXfield}) regular
if
\begin{equation}
E(X_{0,0}|\mathcal{F}_{0,-\infty})=0\text{ a.s. and$\;$ }E(X_{0,0}%
|\mathcal{F}_{-\infty,0})=0\text{ a.s.} \label{regX}%
\end{equation}
Our first result provides a necessary condition for tightness.

\begin{prop}
\label{proptight}Assume that the random field is regular and in addition, for
$q>2,$ we have
\begin{equation}
\sum_{i,j\geq0}\left\Vert \mathcal{P}_{-i,-j}(X_{0,0})\right\Vert _{q}<\infty.
\label{projdelta}%
\end{equation}
Then $W_{n,m}(t,s)$ is tight in $D([0,1]^{2})$ endowed with the uniform topology.
\end{prop}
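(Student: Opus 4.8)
The goal is to prove tightness of the two-parameter process $W_{n,m}(t,s) = S_{[nt],[ms]}/\sqrt{nm}$ in $D([0,1]^2)$ under the uniform topology, assuming regularity (\ref{regX}) and the projective condition (\ref{projdelta}) for some $q>2$. The plan is to reduce tightness to a moment bound on the increments of the partial sum process over rectangles, and then to establish that moment bound using the orthomartingale structure furnished by the projective decomposition.

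\textbf{Step 1: the tightness criterion.} First I would invoke a standard tightness criterion for the multiparameter Skorokhod space $D([0,1]^2)$: it suffices to control the moments of the increments of $S_{\cdot,\cdot}$ over rectangles. Concretely, for a rectangle $(k,k']\times(\ell,\ell']$ write the increment
\[
\Delta = \sum_{u=k+1}^{k'}\sum_{v=\ell+1}^{\ell'} X_{u,v},
\]
and the aim is to show that for some $\beta>1$ and a constant $C$,
\[
E\bigl|\Delta\bigr|^{q} \leq C\,\bigl((k'-k)(\ell'-\ell)\bigr)^{q/2}.
\]
Since $q>2$, the exponent $q/2>1$ on each factor gives the superlinear control needed for tightness over rectangles; together with the regularity assumption that kills boundary (degenerate) contributions, this yields tightness in the uniform topology. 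The moment bound with exponent $q/2$ in each direction is exactly what the higher-moment projective hypothesis (\ref{projdelta}) is designed to produce.

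\textbf{Step 2: from the moment bound to orthomartingales via projections.} To prove the displayed moment bound, I would decompose $\Delta$ using the projection operators $\mathcal{P}_{i,j}$ much as in (\ref{ort dec}). By stationarity and commutativity, one writes $X_{u,v}$ as a sum of projections $\mathcal{P}_{i,j}(X_{u,v})$, and regroups so that $\Delta$ becomes a sum of orthomartingale-difference arrays indexed by $(i,j)$. For each fixed lag, the inner array is a genuine field of martingale differences in both coordinates, so I can apply the Rosenthal inequality for martingales (Theorem \ref{Rosenthal ineq} in the Appendix) twice — once in each coordinate direction — to bound its $L^q$ norm by $\bigl((k'-k)(\ell'-\ell)\bigr)^{1/2}$ times the $L^q$ norm of a single projection $\|\mathcal{P}_{-i,-j}(X_{0,0})\|_q$. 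Summing over the lags $(i,j)$ and using the triangle (Minkowski) inequality in $L^q$, the total is controlled by $\bigl((k'-k)(\ell'-\ell)\bigr)^{1/2}\sum_{i,j\geq0}\|\mathcal{P}_{-i,-j}(X_{0,0})\|_q$, which is finite precisely by (\ref{projdelta}). Raising to the $q$-th power gives the desired $\bigl((k'-k)(\ell'-\ell)\bigr)^{q/2}$ bound.

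\textbf{Step 3: the main obstacle.} The hard part will be carrying out the double application of Rosenthal's inequality cleanly in the two-parameter setting, because the martingale structure must be respected in \emph{both} coordinates simultaneously and the orthogonality across projection lags has to be combined with Minkowski's inequality in $L^q$ in the right order. In particular, one must verify that after the projective regrouping the relevant array really is a martingale-difference field with respect to the appropriate filtration in each direction (using commutativity), so that Rosenthal applies iteratively; the regularity condition (\ref{regX}) is what guarantees there are no leftover non-martingale boundary terms that would spoil the rectangular moment estimate. Once the $L^q$ increment bound over arbitrary rectangles is in hand with exponent $q/2>1$ in each variable, tightness in $D([0,1]^2)$ under the uniform topology follows from the standard multiparameter moment criterion, completing the proof.
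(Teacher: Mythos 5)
Your proposal proves the wrong kind of tightness. The moment bound you aim for in Step 1, namely $E|\Delta|^{q}\leq C\bigl((k'-k)(\ell'-\ell)\bigr)^{q/2}$, is an \emph{annealed} bound (expectation under the stationary law $P$), and the Bickel--Wichura criterion applied with it yields tightness of the laws of $W_{n,m}$ under $P$. But this proposition is used in Theorem \ref{Thm2nmfunct} to prove a \emph{quenched} functional CLT: what is needed is that for $P$-almost every $\omega$ the family of laws of $W_{n,m}$ under $P^{\omega}=P(\cdot\,|\mathcal{F}_{0,0})(\omega)$ is tight. Tightness under $P$ does not imply tightness under $P^{\omega}$ for almost all $\omega$ (the conditioned field is no longer stationary), so your argument, even if carried out flawlessly, does not establish the statement in the form the paper requires. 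This is precisely why the paper's proof bounds the conditional moments $E^{\omega}(\Delta^{p}(A))$ rather than $E(\Delta^{q}(A))$.

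The missing idea is how to get a bound on $E^{\omega}(\Delta^{p}(A))$ that holds simultaneously for all rectangles $A$ and all $n,m$, with a constant $K_{p,\omega}$ that is finite for a.e.\ $\omega$. The paper does this by fixing an auxiliary exponent $2<p<q$, introducing for each lag $(i,j)$ the maximal function
\begin{equation*}
f_{i,j,p}^{\ast}=\sup_{n,v\geq1}\frac{1}{nv}\sum_{k=1}^{n}\sum_{\ell=1}^{v}Q_{1}^{k}Q_{2}^{\ell}\bigl(|\mathcal{P}_{-i,-j}(X_{0,0})|^{p}\bigr),
\end{equation*}
and invoking the maximal ergodic theorem for Dunford--Schwartz operators (Krengel, Ch.~6, Corollary 1.7) to place $\left(f_{i,j,p}^{\ast}\right)^{1/p}$ in weak-$L^{p}$ with norm controlled by $\lVert\mathcal{P}_{-i,-j}(X_{0,0})\rVert_{q}$; summability of these weak-$L^{p}$ norms, guaranteed by (\ref{projdelta}), then gives $\sum_{i,j\geq0}\left(f_{i,j,p}^{\ast}\right)^{1/p}<\infty$ a.s. The gap between $p$ and $q$ is essential here (the maximal inequality loses integrability), which is another feature absent from your plan, where $q$ itself is used throughout. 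After this, the structure is close to what you describe: the regular decomposition $X_{k,\ell}=\sum_{i,j\geq0}\mathcal{P}_{k-i,\ell-j}(X_{k,\ell})$, the Burkholder/Rosenthal inequality for ortho-martingales applied under $P^{\omega}$ for each fixed lag, Minkowski over lags, and the Bickel--Wichura criterion with $\beta=p/2>1$ --- but now all norms are $\lVert\cdot\rVert_{\omega,p}$ and the conditional square functions are absorbed into the a.s.\ finite random constant $\sum_{i,j\geq0}\left(f_{i,j,p}^{\ast}(\omega)\right)^{1/p}$. Your Steps 2 and 3 correctly identify the ortho-martingale/Rosenthal mechanism and the role of regularity, but without the maximal-operator step your proof cannot be repaired into a quenched statement.
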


\begin{proof}
[Proof of Proposition \ref{proptight}]We shall start the proof of this theorem
by a preliminary consideration: For $2<p<q,$ let us introduce the functions%
\begin{align*}
f_{i,j,p}^{\ast}  &  =\sup_{n,v\geq1}\frac{1}{nv}\sup_{n,v}\sum_{k=1}^{n}%
\sum_{\ell=1}^{v}E^{\omega}(|\mathcal{P}_{k-i,\ell-j}(X_{k,\ell})|^{p})\\
&  =\sup_{n,v\geq1}\frac{1}{nv}\sum_{k=1}^{n}\sum_{\ell=1}^{v}Q_{1}^{k}%
Q_{2}^{\ell}(|\mathcal{P}_{-i,-j}(X_{0,0})|^{p}).
\end{align*}

Let us mention first that, by Corollary 1.7 in Chapter 6 of Krengel (1985)
applied to the function $|\mathcal{P}_{-i,-j}(X_{0,0})|^{p},$ for $\lambda>1$
we have%

\[
\lambda^{p}P\left(  \left(  f_{i,j,p}^{\ast}\right)  ^{1/p}>\lambda\right)
\leq CE\left(  |\mathcal{P}_{-i,-j}(_{0,0})|^{p}\log^{+}|\mathcal{P}%
_{-i,-j}(_{0,0})|\right)  \leq CE\mathbf{|}\mathcal{P}_{-i,-j}(X_{0,0})|^{q}.
\]

It follows that $\left(  f_{i,j,p}^{\ast}\right)  ^{1/p}$ belongs to the weak
space $L^{p,\mathrm{weak}}$ defined by%
\[
L^{p,\mathrm{weak}}=\{f\text{ real-valued measurable function defined on
}\Omega:\text{ }\sup_{\lambda>0}\lambda^{p}P(|f|>\lambda)<\infty\}.
\]
This is a Banach space whose norm will be denoted by $\left\Vert
\cdot\right\Vert _{p,\mathrm{weak}}$ and it is equivalent to the pseudo-norm
$\left(  \sup_{\lambda>0}\lambda^{p}P(|f|>\lambda)\right)  ^{1/p}$. We have
that%
\[
\left\Vert \sum_{i,j\geq0}\left(  f_{i,j,p}^{\ast}\right)  ^{1/p}\right\Vert
_{p,\mathrm{weak}}\leq\sum_{i,j\geq0}\left\Vert \left(  f_{i,j,p}^{\ast
}\right)  ^{1/p}\right\Vert _{p,\mathrm{weak}}\leq\sum_{i,j\geq0}\left\Vert
\mathcal{P}_{-i,-j}(X_{0,0})\right\Vert _{q}.
\]
Therefore, if $\sum_{i,j\geq0}\left\Vert \mathcal{P}_{-i,-j}(X_{0,0}%
)\right\Vert _{q}<\infty$ then%

\[
\sum_{i,j\geq0}\left(  f_{i,j,p}^{\ast}\right)  ^{1/p}<\infty\text{
\ }P-\text{a.s.}%
\]
For proving tightness we shall verify the moment condition given in relation
(3) in Bickel and Wichura (1971). To verify it, denote an increment of the
process $W_{n,m}(t,s)$ on the rectangle $A=[t_{1},t_{2})\times\lbrack
s_{1},s_{2})$ by
\[
\Delta(A)=\frac{1}{\sqrt{nm}}|\sum\nolimits_{k=[nt_{1}]}^{[nt_{2}]-1}%
\sum\nolimits_{\ell=[ms_{1}]}^{[ms_{2}]-1}X_{k,\ell}|.
\]
Let us note that by (\ref{regX}) we have the representation%
\[
X_{k,\ell}=\sum_{i,j\geq0}\mathcal{P}_{k-i,\ell-j}(X_{k,\ell})\ \text{a.s.}%
\]
Fix $\omega$ where this representation holds for all $k$ and $\ell$ and also
$\sum_{i,j\geq0}\left(  f_{i,j,p}^{\ast}\right)  ^{1/p}<\infty$. Therefore we
have%
\[
\left\Vert \Delta(A)\right\Vert _{\omega,p}\leq\frac{1}{\sqrt{nm}}%
\sum_{i,j\geq0}\Vert\sum\nolimits_{k=[nt_{1}]}^{[nt_{2}]-1}\sum\nolimits_{\ell
=[ms_{1}]}^{[ms_{2}]-1}\mathcal{P}_{k-i,\ell-j}(X_{k,\ell})\Vert_{\omega
,p}\text{ },
\]
where $\left\Vert \cdot\right\Vert _{\omega,p}$ denotes the norm in
$L^{p}(P^{\omega}).$ Note that, because we have to compute the $p$-th moments
of an ortho-martingale, we can use the Burkholder inequality as given in
Theorem 3.1 of Fazekas (2005) and obtain%
\[
\left\Vert \Delta(A)\right\Vert _{\omega,p}\leq\frac{C_{p}}{\sqrt{nm}}%
\sum_{i,j\geq0}\Vert\sum\nolimits_{k=[nt_{1}]}^{[nt_{2}]-1}\sum\nolimits_{\ell
=[ms_{1}]}^{[ms_{2}]-1}\mathcal{P}_{k-i,\ell-j}^{2}(X_{k,\ell})\Vert
_{\omega,p/2}^{1/2}.
\]
By applying now twice, consecutively, the Cauchy-Schwartz inequality, we
obtain
\[
E^{\omega}(\Delta^{p}(A))\leq C_{p}[(t_{2}-t_{1})(s_{2}-s_{1})]^{p/2}\left\{
\sum_{i,j\geq0}\left(  f_{i,j,p}^{\ast}(\omega)\right)  ^{1/p}\right\}  ^{p}.
\]
If $B$ is a neighboring rectangle of $A$, by the H\"{o}lder inequality we
have
\[
E^{\omega}(\Delta^{p/2}(A)\Delta^{p/2}(B))\leq K_{p,\omega}\left(  \mu
(A)\mu(B)\right)  ^{p/4},
\]
where $\mu$ is the Lebesgue measure on $[0,1]^{2}$. Therefore the moment
condition in relation (3) in Bickel and Wichura (1971) is verified with
$\gamma=p$ and $\beta=p/2.$ Since $\beta>1$ the tightness follows from Theorem
3 in Bickel and Wichura (1971).
\end{proof}

\begin{theorem}
\label{Thm2nmfunct} Assume that (\ref{projdelta}) and (\ref{higher moment 2})
are satisfied. Then for $P$-almost all $\omega,$ the sequence of processes
$(W_{n,m}(t,s))_{n,m\geq1}$ converges in distribution on $D([0,1]^{2})$
endowed with uniform topology to $\sigma W(t,s),$ as $n\wedge m\rightarrow
\infty$ under $P^{\omega}$.
\end{theorem}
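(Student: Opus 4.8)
The plan is to obtain the weak convergence in $D([0,1]^2)$ from its two standard ingredients, established in the quenched sense (under $P^{\omega}$ for $P$-almost every $\omega$): tightness of the family $(W_{n,m})$ and convergence of its finite-dimensional distributions to those of $\sigma W$. Tightness is precisely the content of Proposition \ref{proptight}, whose hypotheses --- regularity (\ref{regX}) of the field together with (\ref{projdelta}) --- are in force; so the substantive work is the quenched convergence of the finite-dimensional distributions, and the idea is to transport the quenched functional CLT available for ortho-martingales (Peligrad and Voln\'{y} (2018)) to $(W_{n,m})$ through the ortho-martingale approximation constructed in Section 3. As a first step I would note that (\ref{projdelta}) coincides, by stationarity $\lVert\mathcal{P}_{-i,-j}(X_{0,0})\rVert_q=\lVert\mathcal{P}_{0,0}(X_{i,j})\rVert_q$, with condition (\ref{nm fact 0}), which by Lemma \ref{fact3} implies (\ref{main condi}). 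Hence the approximating martingale field $D_{i,j}$ with partial sums $M_{n,m}$ of the proof of Theorem \ref{Thm2nm} is available, and the bound (\ref{momf0}) supplies the integrability $E\bigl(D_{0,0}^2\log(1+|D_{0,0}|)\bigr)<\infty$ required by the quenched ortho-martingale CLT.

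Next, fix $(t,s)$ with $t,s>0$ (the boundary points are trivial). Since $[nt]\wedge[ms]\to\infty$ as $n\wedge m\to\infty$ and $[nt][ms]/(nm)\to ts$, the negligibility estimate (\ref{negli}) read at the rectangle of sides $[nt],[ms]$ gives $\frac{1}{nm}E_{0,0}\bigl(S_{[nt],[ms]}-R_{[nt],[ms]}-M_{[nt],[ms]}\bigr)^2\to 0$ a.s. Using (\ref{higher moment 2}) through Lemma \ref{fact2} in the same way yields $\frac{1}{nm}E_{0,0}\bigl(R_{[nt],[ms]}^2\bigr)\to 0$ a.s. Combining the two estimates, $\frac{1}{\sqrt{nm}}\bigl(S_{[nt],[ms]}-M_{[nt],[ms]}\bigr)\to 0$ in $L^2(P^{\omega})$, hence in $P^{\omega}$-probability, for $P$-almost every $\omega$. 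Because a finite-dimensional distribution involves only finitely many points $(t_k,s_k)$, I intersect the corresponding full-measure sets so that this quenched approximation holds simultaneously at all of them.

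I would then apply the quenched functional CLT for ortho-martingales of Peligrad and Voln\'{y} (2018) to $\frac{1}{\sqrt{nm}}M_{[n\cdot],[m\cdot]}$, which converges under $P^{\omega}$ to $\sigma W$; in particular its finite-dimensional distributions converge to the Gaussian vector with the Brownian-sheet covariance $(t_k\wedge t_{k'})(s_k\wedge s_{k'})$, the orthogonality of the martingale differences and the commuting structure guaranteeing exactly this covariance. A Cram\'{e}r--Wold reduction together with Slutsky's theorem, fed by the $P^{\omega}$-probability negligibility of $S-M$ just obtained, transfers this convergence to $\frac{1}{\sqrt{nm}}\bigl(S_{[nt_k],[ms_k]}\bigr)_k$, so the finite-dimensional distributions of $(W_{n,m})$ converge quenched to those of $\sigma W$. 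Coupled with the tightness from Proposition \ref{proptight}, the weak convergence of $(W_{n,m})$ to $\sigma W$ in $D([0,1]^2)$ under the uniform topology then follows from the standard criterion.

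The main obstacle is the interface between the pointwise (quenched $L^2$) martingale approximation and the process-level statement: $S-M$ is controlled only in mean square at each fixed $(t,s)$, not uniformly, so it cannot by itself produce closeness in the sup-norm. This is exactly why tightness must be secured independently and directly for $(W_{n,m})$ via Proposition \ref{proptight}, rather than inherited from the martingale part; the strengthening to $q>2$ in (\ref{projdelta}) is what furnishes the $L^{p,\mathrm{weak}}$ maximal bounds underlying that tightness. A secondary point deserving care is checking that the regularity (\ref{regX}) required by Proposition \ref{proptight} is genuinely available in the present setting and that the martingale limit carries precisely the Brownian-sheet covariance, both of which rest on the commuting-filtration structure and the orthogonality of the projections $\mathcal{P}_{i,j}$.
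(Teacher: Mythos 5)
Your proposal is correct and follows essentially the same route as the paper: tightness from Proposition \ref{proptight}, and quenched finite-dimensional convergence obtained by combining the martingale approximation (\ref{negli}) and the negligibility (\ref{negliR}) read along the rectangles $([nt],[ms])$ with the quenched CLT for ortho-martingales, assembled via Cram\'{e}r--Wold and Slutsky. The only point to make fully explicit is that a single $P$-null exceptional set must serve \emph{all} finite-dimensional distributions simultaneously (intersecting full-measure sets collection by collection does not by itself yield this, since there are uncountably many collections); this is harmless here because the a.s. sets in (\ref{negli}) and (\ref{negliR}) do not depend on $(t,s)$, and the paper handles the same issue by establishing the approximation simultaneously for all rational $(t,s)$.
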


\begin{proof}
[Proof of Theorem \ref{Thm2nmfunct}]The tightness follows by Proposition
\ref{proptight}. The proof of the convergence of finite dimensional
distributions is based on the following observation. By combining the
martingale approximation in (\ref{negli}) with the negligibility of $R_{n,m}$
in (\ref{negliR}), for almost all $\omega$ and all rational numbers $0\leq
s,t\leq1$ we obtain
\[
\lim_{n\wedge m\rightarrow\infty}\frac{\left\Vert S_{[nt],[ms]}-M_{[nt],[ms]}%
\right\Vert _{\omega,2}}{(nm)^{1/2}}=0.
\]
Whence, by using the Cram\`{e}r-Wold device and then the triangle inequality,
we deduce that the convergence of the finite dimensional
distributions\ follows from the corresponding result for ortho-martingales.
But this fact was already proved in Peligrad and Voln\'{y} (2019). The proof
is complete.
\end{proof}

Similarly we obtain the following result

\begin{theorem}
\label{Thm1functional} Assume that (\ref{higher moment q}) is satisfied with
$q>2$. Then the conclusion of Theorem \ref{Thm2nmfunct} holds.
\end{theorem}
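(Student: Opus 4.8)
The goal is to prove Theorem \ref{Thm1functional}: under the higher-moment projective condition (\ref{higher moment q}) with $q>2$, the functional quenched CLT holds, i.e. the conclusion of Theorem \ref{Thm2nmfunct} is valid. The plan is to reduce this statement to two already-established ingredients exactly as was done for Theorem \ref{Thm2nmfunct}, namely tightness via Proposition \ref{proptight} and the convergence of finite-dimensional distributions via the ortho-martingale approximation.

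First I would verify that hypothesis (\ref{higher moment q}) is strong enough to activate Proposition \ref{proptight}. That proposition requires two things: the regularity condition (\ref{regX}) and the projective summability (\ref{projdelta}), i.e. $\sum_{i,j\geq0}\lVert\mathcal{P}_{-i,-j}(X_{0,0})\rVert_{q}<\infty$ for some $q>2$. The latter is precisely the content of Lemma \ref{fact3}: for $q\geq2$, condition (\ref{higher moment q}) implies $\sum_{u,v\geq0}\lVert\mathcal{P}_{0,0}(X_{u,v})\rVert_{q}<\infty$, which by stationarity is the same as (\ref{projdelta}). As for regularity, I would note that the summability of the projections forces $E(X_{0,0}\mid\mathcal{F}_{0,-\infty})=E(X_{0,0}\mid\mathcal{F}_{-\infty,0})=0$, since letting one index tend to $-\infty$ kills all projective increments; alternatively one argues that (\ref{higher moment q}) entails the finiteness of the relevant projective series, and the telescoping of conditional expectations yields (\ref{regX}). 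With (\ref{regX}) and (\ref{projdelta}) in hand, Proposition \ref{proptight} gives tightness of $(W_{n,m}(t,s))$ in $D([0,1]^2)$ under the uniform topology.

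Next I would establish the convergence of finite-dimensional distributions. Here the key is that (\ref{higher moment q}) with $q>2$ implies, through Lemma \ref{fact3} applied with that same $q>2$, condition (\ref{main condi}) (the $\phi$-Luxemburg summability with $\phi(x)=x^2\log(1+|x|)$), and it also implies (\ref{higher moment 2}) since $q>2$ makes the $q$-norm series dominate the $2$-norm series after the obvious Hölder/monotonicity comparison. Consequently Theorem \ref{Thm2nm} yields the quenched $L^2(P^{\omega})$ ortho-martingale approximation (\ref{negli}), and Lemma \ref{fact2} (whose hypothesis (\ref{higher moment 2}) now holds) yields the negligibility of the random centering (\ref{negliR}). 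Combining these two facts exactly as in the proof of Theorem \ref{Thm2nmfunct} gives, for $P$-almost every $\omega$ and all rational $s,t\in[0,1]$,
\[
\lim_{n\wedge m\rightarrow\infty}\frac{\lVert S_{[nt],[ms]}-M_{[nt],[ms]}\rVert_{\omega,2}}{(nm)^{1/2}}=0.
\]
The finite-dimensional distributions of the martingale part converge to those of $\sigma W(t,s)$ by the ortho-martingale functional result of Peligrad and Voln\'{y}, and the Cram\'{e}r--Wold device together with the triangle inequality transfers this to $W_{n,m}$.

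The main obstacle is not any single hard estimate but rather making the implications between hypotheses airtight, so that the reduction to Proposition \ref{proptight} and Theorem \ref{Thm2nmfunct} is rigorous. The delicate point is checking that (\ref{higher moment q}) with $q>2$ simultaneously delivers \emph{both} the $q$-norm projective summability needed for tightness \emph{and} the weaker conditions (\ref{main condi}) and (\ref{higher moment 2}) needed for the finite-dimensional convergence and the centering negligibility; this rests on Lemma \ref{fact3} being stated for all $q\geq2$ and on the elementary comparison $\lVert\cdot\rVert_2\leq\lVert\cdot\rVert_q$ together with $(uv)^{-1/2}\leq(uv)^{-1/q}$ for $uv\geq1$ when $q>2$, which lets the $q$-indexed series in (\ref{higher moment q}) control the $2$-indexed series in (\ref{higher moment 2}). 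Once these logical links are secured, the theorem follows verbatim from the proof of Theorem \ref{Thm2nmfunct}, and I would keep the argument short by explicitly citing that proof.
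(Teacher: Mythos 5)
Your overall route is exactly the paper's (the paper literally proves this theorem by the single word ``similarly''): reduce to Theorem \ref{Thm2nmfunct} by checking that (\ref{higher moment q}) with $q>2$ yields both of its ingredients, namely (\ref{projdelta}) via Lemma \ref{fact3} (giving tightness through Proposition \ref{proptight}) and (\ref{higher moment 2}) via $\lVert\cdot\rVert_{2}\leq\lVert\cdot\rVert_{q}$ together with $(uv)^{-1/2}\leq(uv)^{-1/q}$ (giving, through Theorem \ref{Thm2nm} and Lemma \ref{fact2}, the martingale approximation and the negligibility of the centering, hence the finite-dimensional convergence). That reduction is correct and is what the authors intend.

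There is, however, one step whose justification as you state it is not valid: the regularity (\ref{regX}) needed for Proposition \ref{proptight}. You claim that the summability of the projections, $\sum_{i,j\geq0}\lVert\mathcal{P}_{-i,-j}(X_{0,0})\rVert_{q}<\infty$, by itself ``kills all projective increments'' and forces $E(X_{0,0}\mid\mathcal{F}_{0,-\infty})=E(X_{0,0}\mid\mathcal{F}_{-\infty,0})=0$. It does not. The telescoping identity is
\[
X_{0,0}=\sum_{i=0}^{N}\sum_{j=0}^{M}\mathcal{P}_{-i,-j}(X_{0,0})+E_{-N-1,0}(X_{0,0})+E_{0,-M-1}(X_{0,0})-E_{-N-1,-M-1}(X_{0,0}),
\]
and nothing in (\ref{projdelta}) prevents the remainder terms from having nonzero limits: a nondegenerate $X_{0,0}$ that is $\mathcal{F}_{-\infty,0}$-measurable has \emph{all} projections $\mathcal{P}_{-i,-j}(X_{0,0})$ equal to zero, so (\ref{projdelta}) holds trivially while (\ref{regX}) fails. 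The correct argument extracts regularity directly from (\ref{higher moment q}): taking the row $u=1$ and using stationarity, $\sum_{v\geq1}v^{-1/q}\lVert E_{0,1-v}(X_{0,0})\rVert_{q}<\infty$, while $\sum_{v\geq1}v^{-1/q}=\infty$ and $v\mapsto\lVert E_{0,1-v}(X_{0,0})\rVert_{q}$ is non-increasing; hence $\lVert E_{0,-v}(X_{0,0})\rVert_{q}\rightarrow0$, and the reverse (downward) martingale convergence theorem gives $E(X_{0,0}\mid\mathcal{F}_{0,-\infty})=0$ a.s. The symmetric argument in the first coordinate gives the other half of (\ref{regX}). (The same argument applied to (\ref{higher moment 2}) is what tacitly legitimizes the paper's own appeal to Proposition \ref{proptight} in Theorem \ref{Thm2nmfunct}, whose statement also omits regularity.) With this repair your proof is complete and coincides with the intended one.
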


Let us formulate the multi-indexed form of this result:

For $\mathbf{t}\ \in\lbrack0,1]^{d},$ where $d$ is fixed a positive integer,
we introduce the stochastic random field
\[
W_{\mathbf{n}}(\mathbf{t})=\frac{1}{\sqrt{|\mathbf{n}|}}S_{[n_{1}%
t_{1}]...[n_{d}t_{d}]}%
\]
and denote by $(W(\mathbf{t}))_{\mathbf{t}\in\lbrack0,1]^{d}}$ the standard
$d$-dimensional Brownian sheet. The following is the d-dimensional version of
Theorem \ref{Thm1functional}.

\begin{theorem}
\label{Th funct CLT d}Under the conditions of Theorem \ref{Cor nn nm d} with
$q>2$, for $P$-almost all $\omega,$ the sequence of processes $(W_{\mathbf{n}%
}(\mathbf{t}))_{\mathbf{n}\geq\mathbf{1}}$ converges in distribution to
$\sigma W(\mathbf{t}),$ as $\min_{1\leq i\leq d}n_{i}\rightarrow\infty$ under
$P^{\omega}$.
\end{theorem}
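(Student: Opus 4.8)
The plan is to obtain Theorem \ref{Th funct CLT d} as the $d$-dimensional analogue of Theorem \ref{Thm1functional}, following the same two-step structure (tightness plus convergence of finite-dimensional distributions) that was used for the double-indexed case. The key observation is that under the conditions of Theorem \ref{Cor nn nm d} with $q>2$, we already have, via the proof of that theorem, both an almost sure ortho-martingale approximation and the negligibility of the remainder term $R_{\mathbf{n}}$. Thus the whole argument reduces to transporting the functional CLT for ortho-martingales (established in Peligrad and Voln\'{y}, as invoked in the proof of Theorem \ref{Thm2nmfunct}) across the approximation.

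First I would establish tightness. The plan is to repeat the argument of Proposition \ref{proptight} verbatim but in $Z^{d}$: for $2<p<q$ one introduces the maximal functions
\[
f_{\mathbf{i},p}^{\ast}=\sup_{\mathbf{n}\geq\mathbf{1}}\frac{1}{|\mathbf{n}|}\sum_{\mathbf{k}=\mathbf{1}}^{\mathbf{n}}\mathbf{Q}^{\mathbf{k}}\bigl(|\mathcal{P}_{-\mathbf{i}}(X_{\mathbf{0}})|^{p}\bigr),
\]
and uses the maximal ergodic inequality for commuting Dunford--Schwartz operators (Krengel, Ch.~6) to place $(f_{\mathbf{i},p}^{\ast})^{1/p}$ in the weak-$L^{p}$ space, so that the assumption $\sum_{\mathbf{i}\geq\mathbf{0}}\lVert\mathcal{P}_{-\mathbf{i}}(X_{\mathbf{0}})\rVert_{q}<\infty$ (which follows from (\ref{higher moment d}) via the $d$-dimensional analogue of Lemma \ref{fact3}) gives $\sum_{\mathbf{i}\geq\mathbf{0}}(f_{\mathbf{i},p}^{\ast})^{1/p}<\infty$ a.s. I would then verify the moment condition of the multidimensional Bickel--Wichura criterion by writing the increment over a $d$-dimensional block, expanding $X_{\mathbf{k}}=\sum_{\mathbf{i}\geq\mathbf{0}}\mathcal{P}_{\mathbf{k}-\mathbf{i}}(X_{\mathbf{k}})$, applying the $d$-parameter Burkholder inequality (Fazekas, 2005) to each ortho-martingale piece, and then iterating the Cauchy--Schwarz inequality $d$ times to pull out the product $\prod_{\ell}(t_{\ell,2}-t_{\ell,1})$ with the correct exponent $p/2$. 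Since $p/2>1$, tightness in $D([0,1]^{d})$ with the uniform topology follows.

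For the finite-dimensional distributions, the plan is to combine the ortho-martingale approximation (\ref{martapprxd}) with the a.s. negligibility of $R_{\mathbf{n}}$ established in the proof of Theorem \ref{Cor nn nm d}, giving, for almost every $\omega$ and all rational $\mathbf{t}\in[0,1]^{d}$,
\[
\lim_{\min_{i}n_{i}\rightarrow\infty}\frac{\lVert S_{[n_{1}t_{1}],\dots,[n_{d}t_{d}]}-M_{[n_{1}t_{1}],\dots,[n_{d}t_{d}]}\rVert_{\omega,2}}{|\mathbf{n}|^{1/2}}=0.
\]
The Cram\'{e}r--Wold device together with the triangle inequality in $L^{2}(P^{\omega})$ then reduces the convergence of the finite-dimensional distributions of $W_{\mathbf{n}}$ to the corresponding statement for the normalized ortho-martingale $M_{\mathbf{n}}$, which is exactly the quenched functional CLT for ortho-martingales of Peligrad and Voln\'{y}. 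The main obstacle I anticipate is purely bookkeeping rather than conceptual: making the iterated Burkholder--Cauchy--Schwarz estimate in the tightness step genuinely uniform in all $d$ block-directions so that the Bickel--Wichura exponents $\gamma=p$, $\beta=p/2$ come out cleanly, since in $d$ dimensions one must control mixed increments over neighboring blocks in each of the $d$ coordinate directions simultaneously. Once that estimate is secured, the rest follows by the same reasoning as in Theorems \ref{Thm2nmfunct} and \ref{Thm1functional}.
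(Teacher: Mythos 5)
Your proposal is correct and follows essentially the same route as the paper: the paper states this theorem as the $d$-dimensional version of Theorem \ref{Thm1functional}, whose (implicit) proof is exactly your two-step argument — tightness via the $d$-parameter extension of Proposition \ref{proptight} (maximal ergodic inequality, Fazekas's Burkholder inequality, Bickel--Wichura), and finite-dimensional convergence via the ortho-martingale approximation plus negligibility of $R_{\mathbf{n}}$ from Theorem \ref{Cor nn nm d}, reduced by Cram\'{e}r--Wold to the quenched functional CLT for ortho-martingales of Peligrad and Voln\'{y}.
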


\section{Examples}

We shall give examples providing new results for linear and Volterra random
fields. The interest of considering these applications is for obtaining
functional quenched CLT\ by using more general sequences of constants than in
Peligrad and Voln\'{y} (2019), where a coboundary decomposition was used.
Let $d$ be an integer greater than $2$ and $q>2$. Throughout this section, as
before, we denote by $C_{q}>0$ a generic constant depending on $q$, which may
be different from line to line.

\begin{example}
\label{linear} (Linear field) Let $(\xi_{\mathbf{n}})_{\mathbf{n}\in Z^{d}}$
be a random field of independent, identically distributed random variables,
which are centered and $E\left(  |\xi_{\mathbf{0}}|^{q}\right)  <\infty$. For
$\mathbf{k}\geq\mathbf{0}$ define%
\[
X_{\mathbf{k}}=\sum_{\mathbf{j}\geq\mathbf{0}}a_{\mathbf{j}}\xi_{\mathbf{k}%
-\mathbf{j}}\text{ }.
\]
Assume that
\begin{equation}
\sum_{\mathbf{k}\geq\mathbf{1}}\frac{1}{|\mathbf{k}|^{1/q}}\biggl(\sum
_{\mathbf{j}\geq\mathbf{k-1}}a_{\mathbf{j}}^{2}\biggr)^{\frac{1}{2}}<\infty.
\label{cond lin}%
\end{equation}
Then the quenched functional CLT in Theorem \ref{Th funct CLT d} holds.
\end{example}

\begin{proof}
Since
\[
E_{\mathbf{1}}(X_{\mathbf{k}})=\sum_{\mathbf{j\geq k-1 }}a_{\mathbf{j}}%
\xi_{\mathbf{k-j}},
\]
by the independence of $\xi_{\mathbf{n}}$ and the Rosenthal inequality (see
Theorem \ref{Rosenthal ineq}, given in the Appendix), we obtain%

\begin{align*}
\Vert{E_{\mathbf{1}}(X_{\mathbf{k}})}\Vert_{q}^{q}  &  =\Vert{\sum
_{\mathbf{j\geq k-1}}a_{\mathbf{j}}\xi_{\mathbf{k-j}}}\Vert_{q}^{q}\\
&  \leq C_{q}\left[  \biggl(\sum_{\mathbf{j\geq k-1}}a_{\mathbf{j}}^{2}%
E(\xi_{\mathbf{k-j}}^{2})\biggr)^{\frac{q}{2}}+\sum_{\mathbf{j\geq k-1}%
}E|a_{\mathbf{j}}\xi_{\mathbf{k-j}}|^{q}\right] \\
&  \leq C_{q}\left[  \biggl(\sum_{\mathbf{j\geq k-1}}a_{\mathbf{j}}%
^{2}\biggr)^{\frac{q}{2}}\left(  E\xi_{\mathbf{0}}^{2}\right)  ^{\frac{q}{2}%
}+\sum_{\mathbf{j\geq k-1}}|a_{\mathbf{j}}|^{q}E\left(  |\xi_{\mathbf{0}}%
|^{q}\right)  \right]  .
\end{align*}

By the monotonicity of norms in $\ell_{p}$, we have%

\[
\biggl(\sum_{\mathbf{j\geq k-1}}|a_{\mathbf{j}}|^{q}\biggr)^{\frac{1}{q}}%
\leq\biggl(\sum_{\mathbf{j\geq k-1}}a_{\mathbf{j}}^{2}\biggr)^{\frac{1}{2}}.
\]
Therefore
\[
\Vert{E_{\mathbf{1}}(X_{\mathbf{k}})}\Vert_{q}\leq C_{q}\biggl(\sum
_{\mathbf{j\geq k-1}}a_{\mathbf{j}}^{2}\biggr)^{\frac{1}{2}}.
\]

So condition (\ref{higher moment d}) is implied by (\ref{cond lin}). Whence
the result in Theorem \ref{Th funct CLT d} holds.
\end{proof}

\begin{remark}
\label{rmdtwo}For the case $d=2$, we can assume the following condition
imposed to the coefficients:
\begin{equation}
\sum_{\mathbf{k}\geq\mathbf{1}}\frac{1}{|\mathbf{k}|^{1/2}}\biggl(\sum
_{\mathbf{j}\geq\mathbf{k-1}}a_{\mathbf{j}}^{2}\biggr)^{\frac{1}{2}}<\infty.
\label{cond linear}%
\end{equation}
(a) If we assume $E\left(  \xi_{\mathbf{0}}^{2}\right)  <\infty$, then the
quenched convergence in (\ref{Quenched nn}) holds.
\ \ \ \ \ \ \ \ \ \ \ \ \ \ \ \ \ \ \linebreak(b) If we assume $E\left(
|\xi_{\mathbf{0}}|^{2}\log(1+|\xi_{\mathbf{0}}|)\right)  <\infty$, the
quenched convergence in (\ref{quenched mn}) holds.
\ \ \ \ \ \ \ \ \ \ \ \ \ \ \ \ \ \ \ \ \ \ \ \ \ \ \ \ \ \ \ \ \ \ \ \ \ \ \ \ \ \ \ \ \ \ \ \ \ \ \ \ \ \ \ \ \ \ \ \ \ \ \ \ \ \ \ \ \ \ \ \ \ \ \ \ \ \ \ \ \ \ \ \ \ \ \ \ \ \ \ \ \ \ \ \ \ \ \ \ \ \ \ \ \ \ \ \ \ \ \ \ \ \ \ \ \ \ \ \ \ \ \ \ \ \ \ \ \ \ \ \ \ \ \ \ \ \ \ \ \ \ \ \ \ \ \ \ \ \ \ \ \ \ \ \ \ \ \ \ \ \ \ \ \ \ \ \ \ \ \ \linebreak%
(c) If we assume $E|\xi_{\mathbf{0}}|^{q}<\infty$, for $q>2$ then, the quench
functional CLT in Theorem \ref{Thm2nmfunct}, holds.
\end{remark}

\begin{proof}
The first part of the remark is a direct consequence of item (a) in Theorem
\ref{Cor mn}, since condition (\ref{higher moment 2}) is implied by
(\ref{cond linear}). To prove (b), notice that by algebraic manipulations
similar to those used in the proof of  Lemma \ref{fact3},\ we have that
(\ref{cond linear}) implies that  $\sum_{\mathbf{j}\geq\mathbf{0}%
}|a_{\mathbf{j}}|<\infty.$

Note that
\[
\mathcal{P}_{\mathbf{0}}(X_{\mathbf{u}})=a_{\mathbf{u}}\xi_{\mathbf{0}}%
\]
and therefore we have
\[
\text{ }\sum_{\mathbf{j}\geq\mathbf{0}}\Vert\mathcal{P}_{\mathbf{0}%
}(X_{\mathbf{j}})\Vert_{\phi}=\sum_{\mathbf{j}\geq\mathbf{0}}%
|a_{\mathbf{j}}|\cdot\Vert\xi_{\mathbf{0}}\Vert_{\phi}<\infty.
\]
It follows that (\ref{cond linear}) also implies (\ref{main condi}). Therefore
the second part of the remark follows by item (b) in Theorem \ref{Cor mn}.
Part (c) of the remark follows in a similar way.
\end{proof}

\bigskip

For example, note that (\ref{cond linear}) holds if we take
\[
a_{u,v}=\frac{1}{uv}\frac{1}{h(u)g(v)},
\]
with $h$, $g$ slowly varying functions at infinity satisfying
\[
\sum_{u\geq1}\frac{1}{uh(u)}<\infty\text{ and }\sum_{v\geq1}\frac{1}%
{vg(v)}<\infty.
\]
Also, we mention that the quenched convergence in (a) does not hold if only
$\sum_{\mathbf{j}\geq\mathbf{0}}|a_{\mathbf{j}}|<\infty$ as shown for $d=1$ in
Voln\'{y} and Woodroofe (2010).

\begin{example}
\label{Volterra}(Volterra field) Let $(\xi_{\mathbf{n}})_{\mathbf{n}\in Z^{d}%
}$ be a random field of independent, identically distributed, and centered
random variables satisfying $E\left(  |\xi_{\mathbf{0}}|^{q}\right)  <\infty$.
For $\mathbf{k}\geq\mathbf{0}$, define%
\[
X_{\mathbf{k}}=\sum_{(\mathbf{u},\mathbf{v)}\geq(\mathbf{0},\mathbf{0}%
)}a_{\mathbf{u},\mathbf{v}}\xi_{\mathbf{k-u}}\xi_{\mathbf{k-v}}.
\]
where $a_{\mathbf{u},\mathbf{v}}$ are real coefficients with $a_{\mathbf{u}%
,\mathbf{u}}=0$ and $\sum_{\mathbf{u,v}\geq\mathbf{0}}a_{\mathbf{u,v}}%
^{2}<\infty$. In addition, assume that%

\begin{equation}
\sum_{\mathbf{k\geq1}}\frac{1}{|\mathbf{k}|^{1/q}}\biggl(\sum
_{\substack{\mathbf{(u,v)\geq(k-1,k-1)}\\\mathbf{u\neq v}}}a_{\mathbf{u,v}%
}^{2}\biggr)^{1/2}<\infty. \label{cond volt}%
\end{equation}
Then the quenched functional CLT in Theorem \ref{Th funct CLT d} holds.
\end{example}

\begin{proof}
Note that%

\[
E_{\mathbf{1}}(X_{\mathbf{k}})=\sum_{\mathbf{(u,v)\geq(k-1,k-1)}%
}a_{\mathbf{u,v}}\xi_{\mathbf{k-u}}\xi_{\mathbf{k-v}}.
\]
Let $(\xi_{\mathbf{n}}^{\prime})_{\mathbf{n}\in Z^{d}}$ and $(\xi_{\mathbf{n}%
}^{\prime\prime})_{\mathbf{n}\in Z^{d}}$ be two independent copies of
$(\xi_{\mathbf{n}})_{\mathbf{n}\in Z^{d}}$. By independence and the fact that
$a_{\mathbf{k,k}}=0$, by applying the decoupling inequality together with the
Rosenthal inequality, both of which are given for convenience in the Appendix,
(see Theorem \ref{decoupling thm} and Theorem \ref{Rosenthal ineq} from the
Appendix), we obtain%

\begin{gather*}
\Vert{E_{\mathbf{1}}(X_{\mathbf{k}})}\Vert_{q}^{q}=\Vert{\sum
_{\substack{\mathbf{(u,v)\geq(k-1,k-1)}\\\mathbf{u\neq v}}}a_{\mathbf{u,v}}%
\xi_{\mathbf{k-u}}\xi_{\mathbf{k-v}}}\Vert_{q}^{q}\leq C_{2}\Vert
{\sum_{\substack{\mathbf{(u,v)\geq(k-1,k-1)}\\\mathbf{u\neq v}}%
}a_{\mathbf{u,v}}\xi_{\mathbf{k-u}}^{\prime}\xi_{\mathbf{k-v}}^{\prime\prime}%
}\Vert_{q}^{q}\\
\leq C_{q}\biggl[\biggl(\sum_{\substack{\mathbf{(u,v)\geq(k-1,k-1)}%
\\\mathbf{u\neq v}}}a_{\mathbf{u,v}}^{2}E(\xi_{\mathbf{k-u}}^{\prime}%
\xi_{\mathbf{k-v}}^{\prime\prime})^{2}\biggr)^{\frac{q}{2}}+\sum
_{\substack{\mathbf{(u,v)\geq(k-1,k-1)}\\\mathbf{u\neq v}}}|a_{\mathbf{u,v}%
}|^{q}E\left(  |\xi_{\mathbf{k-u}}^{\prime}\xi_{\mathbf{k-v}}^{\prime\prime
}|^{q}\right)  \biggr]\\
\leq C_{q}\biggl[\biggl(\sum_{\substack{\mathbf{(u,v)\geq(k-1,k-1)}%
\\\mathbf{u\neq v}}}a_{\mathbf{u,v}}^{2}\biggr)^{\frac{q}{2}}E(\xi
_{\mathbf{0}}^{2})^{q}+\sum_{\substack{\mathbf{(u,v)\geq(k-1,k-1)}%
\\\mathbf{u\neq v}}}|a_{\mathbf{u,v}}|^{q}E\left(  |\xi_{\mathbf{0}}%
|^{q}\right)  ^{2}\biggr].
\end{gather*}

Above, the first inequality holds by Theorem \ref{decoupling thm} while the
second one is implied by Theorem \ref{Rosenthal ineq}.

Again by the monotonicity of norms in $\ell_{p}$, we have%

\[
\Vert{E_{\mathbf{1}}(X_{\mathbf{k}})}\Vert_{q}\leq C_{q}\biggl(\sum
_{\mathbf{u,v\geq k-1}}a_{\mathbf{u,v}}^{2}\biggr)^{\frac{1}{2}}.
\]

Thus the results of Theorem \ref{Th funct CLT d} hold.
\end{proof}

\section{Appendix}

For convenience, we mention one classical inequality for martingales, see
Theorem 2.11, p. 23, Hall and Heyde (1980) and also Theorem 6.6.7 Ch. 6, p.
322, de la Pe\~{n}a and Gin\'{e} (1999).

\begin{theorem}
[Rosenthal's Inequality]\label{Rosenthal ineq} Let $p\geq2$. Let $M_{n}%
=\sum_{k=1}^{n}X_{k}$ where $\{M_{n},\mathcal{F}_{n}\}$ is a martingale with
martingale differences $(X_{n})$. Then there are constants $0<c_{p},
C_{p}<\infty$ such that%

\begin{align*}
&  c_{p}\biggl\{\sum_{k=1}^{n}E|X_{k}|^{p}+E\biggl[\biggl(\sum_{k=1}%
^{n}E(X_{k}^{2}|\mathcal{F}_{k-1})\biggr)^{p/2}\biggr]\biggr\}\\
&  \leq\Vert{M_{n}}\Vert_{p}^{p}\leq C_{p}\biggl\{E\biggl[\biggl(\sum
_{k=1}^{n}E(X_{k}^{2}|\mathcal{F}_{k-1})\biggr)^{p/2}\biggr]+\sum_{k=1}%
^{n}E|X_{k}|^{p}\biggr\}.
\end{align*}

\end{theorem}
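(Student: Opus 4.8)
The inequality is classical (it is stated here only for reference), so rather than reproving it from scratch I sketch the route I would take, which rests on the Burkholder--Davis--Gundy (BDG) inequality together with a comparison between the square bracket and the angle bracket. Write $[M]_n=\sum_{k=1}^{n}X_k^2$ and $\langle M\rangle_n=\sum_{k=1}^{n}E(X_k^2\mid\mathcal{F}_{k-1})$. The BDG inequality, which I would invoke as known (or establish by a good-$\lambda$ argument), furnishes constants $0<a_p\le A_p<\infty$ with $a_p\,E\big([M]_n^{p/2}\big)\le\|M_n\|_p^p\le A_p\,E\big([M]_n^{p/2}\big)$ for every $p\ge1$. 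The theorem is therefore equivalent to the two-sided estimate $E\big([M]_n^{p/2}\big)\asymp E\big(\langle M\rangle_n^{p/2}\big)+\sum_{k=1}^{n}E|X_k|^p$, which I would prove one direction at a time.

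For the upper half I would decompose $[M]_n=\langle M\rangle_n+N_n$, where $N_n=\sum_{k=1}^{n}Y_k$ with $Y_k=X_k^2-E(X_k^2\mid\mathcal{F}_{k-1})$ is itself a martingale. Since $p\ge2$ forces $p/2\ge1$, convexity gives $E\big([M]_n^{p/2}\big)\le 2^{p/2-1}\big(E\big(\langle M\rangle_n^{p/2}\big)+E\big(|N_n|^{p/2}\big)\big)$; the first term is already one of the target quantities, so the work is the martingale term. Applying BDG to $N_n$ yields $E\big(|N_n|^{p/2}\big)\le A_{p/2}\,E\big([N]_n^{p/4}\big)$ with $[N]_n=\sum_k Y_k^2$, and I would dominate $[N]_n\le 2\sum_k X_k^4+2\sum_k E(X_k^2\mid\mathcal{F}_{k-1})^2$, then use the elementary bounds $\sum_k X_k^4\le(\max_k X_k^2)\,[M]_n$ and $\sum_k E(X_k^2\mid\mathcal{F}_{k-1})^2\le(\max_k E(X_k^2\mid\mathcal{F}_{k-1}))\,\langle M\rangle_n$ together with $E\big(\max_k|X_k|^p\big)\le\sum_k E|X_k|^p$.

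A Cauchy--Schwarz step then produces, writing $Z=E\big([M]_n^{p/2}\big)$, an inequality of the shape $Z\le C\big(E\big(\langle M\rangle_n^{p/2}\big)+\sum_k E|X_k|^p+(\sum_k E|X_k|^p)^{1/2}Z^{1/2}\big)$, and the mixed term is absorbed into the left side by Young's inequality, giving the upper bound. The lower half follows from the same ingredients: the pointwise facts $[M]_n\ge\max_k X_k^2$ and $(\sum_k X_k^2)^{p/2}\ge\sum_k|X_k|^p$ (valid for $p\ge2$) give $E\big([M]_n^{p/2}\big)\ge c_p\sum_k E|X_k|^p$, while rewriting $\langle M\rangle_n=[M]_n-N_n$ and reusing the upper-bound control of $E\big(|N_n|^{p/2}\big)$ delivers $E\big(\langle M\rangle_n^{p/2}\big)\le C\big(E\big([M]_n^{p/2}\big)+\sum_k E|X_k|^p\big)$.

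The step I expect to be the main obstacle is closing the absorption argument: before one may move the $Z^{1/2}$ term across the inequality one must know a priori that $Z=E\big([M]_n^{p/2}\big)$ is finite, which is not granted by the hypotheses alone and which I would secure by first truncating the differences (or localizing by a stopping time), proving the bound with constants independent of the truncation, and passing to the limit by monotone convergence. Tracking the constants $c_p,C_p$ through the two applications of BDG and the maximal estimate, so that they remain finite as the truncation is removed, is the delicate bookkeeping that makes this the heart of the proof.
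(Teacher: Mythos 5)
The paper never proves this statement: Rosenthal's inequality is quoted in the Appendix purely as a known tool, with explicit citations to Theorem 2.11 of Hall and Heyde (1980) and Theorem 6.6.7 of de la Pe\~{n}a and Gin\'{e} (1999), and is then invoked in Lemmas \ref{fact1} and \ref{fact3} and in the Examples. So there is no proof in the paper to compare against, and your decision to treat the result as classical is exactly what the authors do; the difference is that you go further and sketch an actual proof. Your route --- reduce via Burkholder--Davis--Gundy to the two-sided comparison $E\bigl([M]_n^{p/2}\bigr)\asymp E\bigl(\langle M\rangle_n^{p/2}\bigr)+\sum_{k}E|X_k|^p$, then control the martingale $N_n=[M]_n-\langle M\rangle_n$ by a second application of BDG, Cauchy--Schwarz, and absorption of the mixed term --- is a sound and standard strategy, close in spirit to Burkholder's original argument, and both directions are organized correctly. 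Three refinements are worth recording. First, the step you single out as the main obstacle is in fact free for finite $n$: if $\sum_k E|X_k|^p=\infty$ the upper bound is vacuous, and otherwise the power-mean inequality gives $E\bigl([M]_n^{p/2}\bigr)\le n^{p/2-1}\sum_k E|X_k|^p<\infty$, so the quantity $Z$ is finite a priori and the absorption is legitimate with no truncation; the $n$-dependent constant is irrelevant because it is only used to justify finiteness, and the absorption then yields constants independent of $n$. Second, if you do truncate, note that $X_k\mathbf{1}_{\{|X_k|\le c\}}$ is no longer a martingale difference, so you must recentre by its conditional expectation; the stopping-time localization you mention as an alternative is the cleaner device since stopping preserves the martingale property. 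Third, when you apply BDG at the exponent $p/2$, which can equal $1$ when $p=2$, Burkholder's inequality alone is insufficient (it needs exponent strictly greater than $1$); you need either Davis's $L^1$ endpoint or the maximal-function form of BDG, which you implicitly acknowledge but should state. With those points repaired, your sketch fills in a proof the paper deliberately omits.
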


The following theorem is a decoupling result for U-statistics, which can be
found on p. 99, Theorem 3.1.1, de la Pe\~{n}a and Gin\'{e} (1999).

\begin{theorem}
[Decoupling inequality]\label{decoupling thm} Let $(X_{i})_{1\leq i\leq n}$ be
$n$ independent random variables and let $(X_{i}^{k})_{1\leq i\leq n}$,
$k=1,\cdots,m$, be $m$ independent copies of this sequences. For each
$(i_{1},i_{2},\cdots,i_{m})\in I_{n}^{m}$, let $h_{i_{1},\cdots,i_{m}}%
:R^{m}\rightarrow R$ be a measurable function such $E\lvert h_{i_{1}%
,\cdots,i_{m}}(X_{i_{1}},\cdots,X_{i_{m}})\rvert<\infty$. Let $f:[0,\infty
)\rightarrow\lbrack0,\infty)$ be a convex non-decreasing function such that
$Ef(\lvert h_{i_{1},\cdots,i_{m}}(X_{i_{1}},\cdots,X_{i_{m}})\rvert)<\infty$
for all $(i_{1},i_{2},\cdots,i_{m})\in I_{n}^{m}$, where $I_{n}^{m}=\left\{
(i_{1},\cdots,i_{m}):i_{j}\in\mathbb{N},1\leq i_{j}\leq n,i_{j}\neq
i_{k},\text{ if }j\neq k\right\}  $. Then there exists $C_{m}>0$ such that%

\[
Ef(\lvert\sum_{I_{n}^{m}}h_{i_{1},\cdots,i_{m}}(X_{i_{1}},\cdots,X_{i_{m}%
})\rvert)\leq Ef(C_{m}\lvert\sum_{I_{n}^{m}}h_{i_{1},\cdots,i_{m}}(X_{i_{1}%
}^{1},\cdots,X_{i_{m}}^{m})\rvert).
\]

\end{theorem}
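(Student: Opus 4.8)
The plan is to prove the stated inequality by a randomization-by-selection argument, bounding the \emph{coupled} sum
\[
U = \sum_{I_n^m} h_{i_1,\ldots,i_m}(X_{i_1},\ldots,X_{i_m})
\]
by the \emph{decoupled} sum
\[
T = \sum_{I_n^m} h_{i_1,\ldots,i_m}(X_{i_1}^{1},\ldots,X_{i_m}^{m}).
\]
After a routine reduction to finitely many nonzero kernels (so that all the sums and expectations below are finite and every rearrangement is legitimate), I would introduce auxiliary labels $\eta_1,\ldots,\eta_n$, i.i.d.\ uniform on $\{1,\ldots,m\}$ and independent of the $X$'s and of all their copies, and work throughout with the off-diagonal selection that retains a tuple $(i_1,\ldots,i_m)$ only when $\eta_{i_\ell}=\ell$ for every $\ell$.

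First I would record the selection identity. Writing $E_\eta$ for the expectation in the labels alone and noting that for a fixed tuple in $I_n^m$ the indices are distinct (so the events $\{\eta_{i_\ell}=\ell\}$ involve distinct, independent labels), one gets
\[
E_\eta\Biggl(\sum_{I_n^m} h_{i_1,\ldots,i_m}(X_{i_1},\ldots,X_{i_m})\prod_{\ell=1}^m \mathbf{1}[\eta_{i_\ell}=\ell]\Biggr)=m^{-m}\,U.
\]
Denoting the selected coupled sum inside the expectation by $S$, the convexity and monotonicity of $f$ give, via conditional Jensen, $f(|U|)=f(m^m|E_\eta S|)\le E_\eta f(m^m|S|)$, whence $Ef(|U|)\le Ef(m^m|S|)$. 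This transfers the problem from the full coupled sum to its randomly selected part.

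Next I would pass from the selected coupled sum to the selected decoupled sum. Under the selection each index $i$ contributes only through the single slot $\ell=\eta_i$; hence, conditionally on $\eta$, the variable $X_i$ may be replaced by the independent copy $X_i^{\eta_i}$ without changing the joint law, since each $X_i$ is used exactly once and the copies are i.i.d. Consequently, conditionally on $\eta$,
\[
S \eqdist S^{\mathrm{dec}}:=\sum_{I_n^m} h_{i_1,\ldots,i_m}(X_{i_1}^{1},\ldots,X_{i_m}^{m})\prod_{\ell=1}^m \mathbf{1}[\eta_{i_\ell}=\ell],
\]
so that $Ef(m^m|S|)=Ef(m^m|S^{\mathrm{dec}}|)$. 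Up to this point everything is just Jensen together with a distributional identity.

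The hard part will be the last passage, from $S^{\mathrm{dec}}$ back to the full decoupled sum $T$, i.e.\ the bound $Ef(m^m|S^{\mathrm{dec}}|)\le Ef(C_m|T|)$. Here Jensen is of no direct help, because averaging can only dominate a full sum by a selected one, and for a general (non-degenerate) kernel a sub-sum need not be controlled by the full sum. My plan is to condition on $\eta$ and, for each copy $\ell$, replace the \emph{unused} entries $X_i^{\ell}$ with $\eta_i\neq\ell$ by fresh independent copies; the resulting object is a genuine full decoupled sum $\widehat T$ with $\widehat T \eqdist T$. Taking the conditional expectation $E[\widehat T\mid \eta,(\text{selected entries})]$, the fully selected tuples reproduce $S^{\mathrm{dec}}$ exactly, while tuples with at least one unselected slot collapse, after integrating the corresponding fresh copy, into U-statistics of order strictly less than $m$. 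Thus $S^{\mathrm{dec}}$ equals this conditional expectation minus a sum of lower-order decoupled U-statistics; conditional Jensen now applies in the favorable direction to the conditional-expectation part, bounding it by $Ef(m^m|\widehat T|)=Ef(m^m|T|)$, and the lower-order remainders are absorbed by induction on the order $m$ (applying the statement to each component of a Hoeffding-type decomposition of $h$), the constant $C_m$ accumulating the factors $m^m$ and the combinatorial multiplicities through the recursion, with the trivial base case $m=1$. I expect the bookkeeping of this induction, rather than any single inequality, to be the principal technical burden.
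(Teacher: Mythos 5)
First, a point of reference: the paper does not prove this statement at all; it is quoted, with citation, from de la Pe\~{n}a and Gin\'{e} (1999), Theorem 3.1.1, so your proposal has to be measured against the standard proof given there. Your first two steps are correct and in fact reproduce the opening of that proof: the selection identity $E_\eta S=m^{-m}U$ together with conditional Jensen (note $x\mapsto f(|x|)$ is convex because $f$ is convex and non-decreasing) gives $Ef(|U|)\leq Ef(m^{m}|S|)$, and your observation that under the selection each index $i$ is used only in the single slot $\eta_i$, so that replacing $X_i$ by $X_i^{\eta_i}$ preserves the joint law, correctly yields $Ef(m^{m}|S|)=Ef(m^{m}|S^{\mathrm{dec}}|)$.

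The gap is in the endgame, and it is a wrong turn rather than mere bookkeeping. Because you condition on $\eta$ (together with the selected entries), the lower-order terms in $E[\widehat T\mid \eta,\ \text{selected}]=S^{\mathrm{dec}}+\sum_{A\subsetneq\{1,\dots,m\}}R_A$ retain the random selection: each $R_A$ is a sum of partially integrated kernels over tuples constrained by $\prod_{\ell\in A}\mathbf{1}[\eta_{i_\ell}=\ell]\prod_{\ell\notin A}\mathbf{1}[\eta_{i_\ell}\neq\ell]$. Your induction hypothesis --- the theorem for order $|A|$ --- bounds a \emph{coupled} sum by a \emph{full decoupled} sum; it says nothing about bounding a \emph{randomly selected} decoupled sum by the full one, and, as you yourself note earlier in the same paragraph, for signed non-degenerate kernels a sub-sum is not dominated by the full sum, so no Jensen-type or termwise argument closes this recursion: iterating your reconstruction only reproduces selection-laden remainders at every order. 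A Hoeffding-type decomposition does not help either, since the obstruction is the indicators, not degeneracy. The standard repair --- and this is how the cited proof actually runs --- is to condition not on $\eta$ but on the coarser $\sigma$-field generated by the mixed variables $Y_i=X_i^{\eta_i}$ alone. Then each slot matches independently with probability $1/m$, and one obtains the deterministic-coefficient identity $E[T\mid\sigma(Y)]=\sum_{A}m^{-|A|}(1-1/m)^{m-|A|}\sum_{\mathbf{i}\in I_n^m}h_{A,\mathbf{i}}\bigl((Y_{i_\ell})_{\ell\in A}\bigr)$, whose top term ($A=\{1,\dots,m\}$) is $m^{-m}$ times a copy of the coupled sum $U$, and whose remainders are honest, unselected coupled $U$-statistics of order $|A|<m$. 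To these the induction hypothesis applies; the resulting lower-order \emph{decoupled} sums are in turn conditional expectations of $T$ itself, namely $\sum_{\mathbf{i}}h_{A,\mathbf{i}}\bigl((X^{\ell}_{i_\ell})_{\ell\in A}\bigr)=E\bigl[T\mid X^{\ell}_i,\ i\leq n,\ \ell\in A\bigr]$, so the favorable direction of Jensen finishes, with $C_m$ accumulating through the recursion exactly as you anticipated. With that substitution your outline becomes the de la Pe\~{n}a--Gin\'{e} proof; as written, the final step does not go through.
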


\section{Acknowledgements}

This research was supported in part by the NSF grant DMS-1811373. Also we
acknowledge a support from the Taft research center at the University of
Cincinnati and a grant provided by MIIS (Math\'{e}matiques, Information,
Ing\'{e}nierie des Syst\`{e}mes), Universit\'{e} de Rouen Normandie. The
authors are grateful to Dalibor Voln\'{y} for carefully reading the manuscript
and useful discussions, and to the referee for suggestions that contributed to
an improvement of a previous version of the paper.


\begin{thebibliography}{99}                                                                                               %


\bibitem {BPP16}Barrera, D., Peligrad, C.and Peligrad, M. (2016). On the
functional CLT for stationary Markov chains started at a point. Stochastic
Process. Appl. 126, 1885-1900.

\bibitem {BM}Bickel, P.J. and Wichura, M.J. (1971). Convergence criteria for
multiparameter stochastic processes and some applications. Ann. Math. Statist.
42, 1656-1670.

\bibitem {B1}Billingsley, P. (1995). Probability and Measures. (3rd ed.).
Wiley Series in Probability and Statistics, New York.

\bibitem {C69}Cairoli, R. (1969). Un th\'{e}or\`{e}me de convergence pour
martingales a indices multiples. C. R. Acad. Sci. Paris S\'{e}r. A-B 269, A587-A589.

\bibitem {CP12}Cuny, C. and Peligrad, M. (2012). Central limit theorem started
at a point for stationary processes and additive functional of reversible J.
Theoret. Probab. 25, 171-188.

\bibitem {CM14}Cuny, C. and Merlev\`{e}de, F. (2014). On martingale
approximations and the quenched weak invariance principle. Ann. Probab. 42, 760-793.

\bibitem {CDV15}Cuny, C., Dedecker J. and Voln\'{y}, D. (2015). A functional
central limit theorem for fields of commuting transformations via martingale
approximation. J. Math. Sci. 219, 765-781.

\bibitem {CV}Cuny, C. and Voln\'{y}, D. (2013). A quenched invariance
principle for stationary processes. ALEA Lat. Am. J. Probab. Math. Stat. 10, 107-115.

\bibitem {DG}De la Pe\~{n}a, V. and Gin\'{e}, E. (1999). Decoupling : from
dependence to independence. Randomly stopped processes. U-statistics and
processes. Martingales and beyond. Springer series in statistics. Probability
and its applications. Springer, New York.

\bibitem {DMP2014}Dedecker, J., Merlev\`{e}de, F. and Peligrad, M. (2014). A
quenched weak invariance principle. Annales de l'I.H.P. Probabilit\'{e}s et
statistiques 50, 872-898.

\bibitem {DL}Derriennic, Y. and Lin, M. (2001). The central limit thorem for
Markov chains with normal transition operators started at a point. Probab.
Theory Relat. Fields 119, 508-528.

\bibitem {EFHN15}Eisner, T., Farkas, B., Haase, M. and Nagel, R. (2015).
Operator theoretic aspects of ergodic theory. Graduate Texts in Mathematics.
Springer, Cham.

\bibitem {El 13}El Machkouri, M., Voln\'{y}, D. and Wu, W.B. (2013). A central
limit theorem for stationary random fields. Stochastic Process. Appl. 123, 1-14.

\bibitem {F}Fazekas, I. (2005). Burkholder's inequality for multiindex
martingales, Ann. Math. Inform. 32, 45--51.

\bibitem {G17}Giraudo, D. (2017). Invariance principle via ortho-martingale
approximation. Stochastics and Dynamics. Online Ready.

\bibitem {Gordin2009}Gordin, M. I. (2009). Martingale-coboundary
representation for a class of stationary random fields. Zap. Nauchn. Sem.
S.-Peterburg. Otdel. Mat. Inst. Steklov. (POMI) 364, Veroyatnostn i
Statistika. 14.2, 88-108, 236; and J. Math. Sci. (N. Y.) 163, 363-374.

\bibitem {PH}Hall, P. and Heyde, C. C. (1980). Martingale Limit Theory and Its
Application. Probability and Mathematical Statistics. Academic Press, Inc.
[Harcourt Brace Jovanovich, Publishers], New York-London.

\bibitem {Hannan73}Hannan, E. J. (1973). Central limit theorems for time
series regression. Z. Wahrscheinlichkeitstheorie und Verw. Gebiete 26, 157-170.



\bibitem {KR}Krasnosel'skii, M.A. and Rutitskii, Ya.B. (1961). Convex
functions and Orlicz spaces (Translated from the first Russian edition). P.
Noordhoff Ltd., Groningen.

\bibitem {K}Krengel, U. (1985). Ergodic Theorems. De Gruyter Studies in
Mathematics, Berlin.

\bibitem {P}Peligrad, M. (2015). Quenched Invariance Principles via Martingale
Approximation; in Asymptotic laws and methods in stochastics. The volume in
honour of Miklos Csorgo work. Springer in the Fields Institute Communications
Series. Springer-Verlag New York 76, 121-137.

\bibitem {PZ1}Peligrad, M. and Zhang, Na (2018a). On the normal approximation
for random fields via martingale methods. Stoch. Proc. Appl. 128, 1333-1346.

\bibitem {PZ2}Peligrad, M. and Zhang, Na (2018b). Martingale approximation for
random fields. Electron. Commun. Probab. 23, Page NO. 28, 9.

\bibitem {PV}Peligrad, M. and Voln\'{y}, D. (2018). Quenched invariance
principles for ortho-martingale-like sequences. arXiv:1802.09106 [math.PR]. J.
Theor. Probab. (online first).

\bibitem {Ro}Rosenblatt, M. (1972). Central limit theorem for stationary
processes. Proc. Sixth Berkeley Symp. on Math. Statist. and Prob. Vol. 2
(Univ. of Calif. Press), 551-561.

\bibitem {V}Voln\'{y}, D. (2015). A central limit theorem for fields of
martingale differences, C. R. Math. Acad. Sci. Paris 353, 1159-1163.

\bibitem {VWa1}Voln\'{y}, D. and Wang, Y. (2014). An invariance principle for
stationary random fields under Hannan's condition. Stochastic Proc. Appl. 124, 4012-4029.

\bibitem {VW10}Voln\'{y}, D. and Woodroofe, M. (2010). An example of
non-quenched convergence in the conditional central limit theorem for partial
sums of a linear process. Dependence in analysis, probability and number
theory (The Phillipp memorial volume), Kendrick Press. 317-323.

\bibitem {VW14}Voln\'{y}, D. and Woodroofe, M. (2014). Quenched central limit
theorems for sums of stationary processes. Statist. Probab. Lett. 85, 161-167.
\end{thebibliography}
\end{document}